\newtheorem{theorem}{Theorem}[section]
\newtheorem{lemma}[theorem]{Lemma}
\newtheorem{proposition}[theorem]{Proposition}
\newtheorem{corollary}[theorem]{Corollary}
\newtheorem{conjecture}[theorem]{Conjecture}
\theoremstyle{definition}
\theoremstyle{remark}
\numberwithin{equation}{section}
\begin{document}

\title{Fourier Transforms of Nilpotent Coadjoint Orbits for $\text{GL}(n,\mathbb{R})$}

\author{Benjamin Harris}
\address{Department of Mathematics, Louisiana State University, Baton Rouge, LA 70803}
\email{blharris@lsu.edu}
\thanks{The author was supported in part by an NSF Graduate Research Fellowship. He was a graduate student at the Massachusetts Institute of Technology when the research was conducted.}


\subjclass[2000]{Primary 22E46; Secondary 43A65, 22E45}

\date{December 3, 2011.}


\keywords{Nilpotent Orbit, Fourier Transform, Reductive Lie Group, Limit Formula, Invariant Eigendistribution}

\begin{abstract}
The main result of this paper is an explicit formula for the Fourier transform of the canonical measure on a nilpotent coadjoint orbit for $\text{GL}(n,\mathbb{R})$. This paper also includes some results on limit formulas for reductive Lie groups including new proofs of classical limit formulas of Rao and Harish-Chandra.
\end{abstract}

\maketitle

\section{Introduction}
Let $G$ be a reductive Lie group, and let $\pi$ be an irreducible, admissible representation of $G$ with character $\Theta_{\pi}$. By results of \cite{BV} and \cite{SV}, the leading term of $\Theta_{\pi}$ at one is an integral linear combination of Fourier transforms of canonical measures on nilpotent coadjoint orbits. Further, every orbit which occurs in this sum has the same complexification. In particular, if $G=\text{GL}(n,\mathbb{R})$, then the leading term of any irreducible character of $G$ is a positive integer times the Fourier transform of the canonical measure on a nilpotent coadjoint orbit.\\
\indent The main result of this paper is an explicit formula for the Fourier transform of the canonical measure on a nilpotent coadjoint orbit for $\text{GL}(n,\mathbb{R})$. Given a conjugacy class of Levi subgroups, $\mathcal{L}$, for $\text{GL}(n,\mathbb{R})$, fix a conjugacy class of parabolics $\mathcal{P}$ with Levi factor $\mathcal{L}$. Then we define
$$\mathcal{O}_{\mathcal{L}}\subset \mu(T^*\mathcal{P})$$
to be the unique open orbit. Here $\mu$ is the moment map (defined in section four). In the next statement, we also denote the canonical measure on this orbit (defined in section two) by $\mathcal{O}_{\mathcal{L}}$. Moreover, whenever $G$ is a reductive Lie group and $H$ is a Cartan subgroup, then $W(G,H)=N_G(H)/H$ denotes the real Weyl group of $G$ with respect to $H$. Here is our main result.

\begin{theorem}
Fix a Cartan subgroup $H\subset G=\text{GL}(n,\mathbb{R})$, let $\mathfrak{h}=\text{Lie}(H)$, and let $C$ be a connected component of the regular set $\mathfrak{h}'$ of $\mathfrak{h}$. Choose positive roots $\Delta^+$ of $\mathfrak{g}_{\mathbb{C}}$ with respect to $\mathfrak{h}_{\mathbb{C}}$ satisfying:\\
(i) If $\alpha$ is a positive real root and $X\in C$, then $\alpha(X)>0$.\\
(ii) If $\alpha$ is a complex root, then $\alpha$ is positive iff $\overline{\alpha}$ is positive.\\
Then
$$\widehat{\mathcal{O}_{\mathcal{L}}}|_C=\sum_{L\in \mathcal{L},\ L\supset H} \left|\frac{W(G,H)_L}{W(L,H)}\right|\frac{\pi_L}{\pi}.$$
Here $\pi=\prod_{\alpha\in \Delta^+}\alpha$, $\Delta_L^+$ denotes the roots of $L$ that lie in $\Delta^+$, $\pi_L=\prod_{\alpha\in \Delta_L^+}\alpha$, and $W(G,H)_L=\{w\in W(G,H)|\ wL=L\}$.
\end{theorem}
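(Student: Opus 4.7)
The plan is to realize the canonical measure on $\mathcal{O}_{\mathcal{L}}$, up to a computable constant, as the pushforward along the moment map of the Liouville measure on $T^*(G/P)$, then Fourier-transform and evaluate the result on the regular set of $\mathfrak{h}$.

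Fix $P \in \mathcal{P}$ with Levi $L_0$ and nilradical $\mathfrak{n}_0$, and use the trace form to identify $\mathfrak{g} \cong \mathfrak{g}^*$ and $T^*(G/P) \cong G \times_P \mathfrak{n}_0$. Richardson's theorem identifies $\mu(T^*(G/P))$ with the closure of $\mathcal{O}_{\mathcal{L}}$ and asserts that $\mu$ is generically a finite cover of $\mathcal{O}_{\mathcal{L}}$, so the canonical measure on $\mathcal{O}_{\mathcal{L}}$ is a scalar multiple of $\mu_*$ of the Liouville measure. Taking Fourier transforms and integrating out the fiber $\mathfrak{n}_0$, I obtain, up to a constant,
\[
\widehat{\mathcal{O}_{\mathcal{L}}}(Y) \;=\; c\int_{G/P} \delta_{\mathfrak{p}}\!\bigl(\mathrm{Ad}(g^{-1})Y\bigr)\,d(gP),
\]
where $\delta_{\mathfrak{p}}$ is the distributional characteristic function of $\mathfrak{p}\subset\mathfrak{g}$.

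For $Y \in C \subset \mathfrak{h}'$, the support condition $\mathrm{Ad}(g^{-1})Y \in \mathfrak{p}$ forces $\mathrm{Ad}(g^{-1})Y$ to lie in some Levi of $\mathfrak{p}$, which by regularity of $Y$ must contain a $G$-conjugate of $\mathfrak{h}$. I would partition the support locus according to $L = \mathrm{Ad}(g)\mathfrak{l}_0$, which ranges over Levis in $\mathcal{L}$ containing $H$. For each such $L$, I would change variables on $G/P$ using a transversal to the action of $L$, and compute the Jacobian for the pullback of $\delta_{\mathfrak{p}}$ along the root directions transverse to $\mathfrak{l}$; this produces the factor $\pi_L(Y)/\pi(Y)$. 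Conditions (i) and (ii) on $\Delta^+$ — fixing the sign on real roots and pairing complex conjugate roots — ensure the Jacobian is real and carries the correct sign throughout $C$.

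The hardest step will be the combinatorial bookkeeping that produces the coefficient $|W(G,H)_L/W(L,H)|$. This factor should arise as the count of components of the support locus over a given Levi $L$, divided by $|W(L,H)|$ (correcting overcounting from the transversal) and by the generic degree of $\mu$. The key observation is that the set of $P$-cosets $wP$ with $\mathrm{Ad}(w)\mathfrak{l}_0 = \mathfrak{l}$ is naturally a torsor for an extension of $W(G,H)_L$ by $W(L,H)$; matching normalizations across this torsor, together with the overall constant $c$, should yield the stated coefficient. As sanity checks, the extreme cases $L = H$ (principal nilpotent orbit) and $L = G$ (the zero orbit), combined with the limit formulas of Rao and Harish-Chandra discussed earlier in the paper, should recover classical formulas and pin down the overall normalization.
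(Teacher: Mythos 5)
Your route is genuinely different from the paper's: the paper never touches the moment map directly, but instead writes $\mathcal{O}_{\mathcal{L}}=\lim_{t\to 0^+}\mathcal{O}_{t\xi}$ for $\xi$ with $Z_G(\xi)\in\mathcal{L}$ (Proposition 4.7), computes $\widehat{\mathcal{O}_{\lambda}}$ for regular $\lambda$ on every Cartan of $\text{GL}(n,\mathbb{R})$ via Rossmann's formula, Harish-Chandra descent and the matching conditions (Proposition 6.1), converts this to $\widehat{\mathcal{O}_{\xi}}$ by the semisimple limit formula of Theorem 3.1, and then lets $t\to 0^+$. Your localization of $\int_{G/P}\delta_{\mathfrak{p}}(\mathrm{Ad}(g^{-1})Y)\,d(gP)$ at the finitely many $\mathfrak{q}\in\mathcal{P}$ containing $\mathfrak{h}$ is sound in outline: each such $\mathfrak{q}$ is a transverse intersection point contributing $\lvert\det(\mathrm{ad}\,Y|_{\mathfrak{g}/\mathfrak{q}})\rvert^{-1}=\lvert\pi_L(Y)/\pi(Y)\rvert$, so your method yields, as the coefficient of $\pi_L/\pi$, the number of parabolics in $\mathcal{P}$ whose Levi through $\mathfrak{h}$ equals $\mathfrak{l}$, divided by the generic degree of $\mu$.

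The genuine gap is precisely the ``key observation'' you defer. The set of $\mathfrak{q}\in\mathcal{P}$ with Levi $\mathfrak{l}$ is the set of orderings of the $\mathfrak{gl}(q_i)$-blocks of $\mathfrak{l}$ compatible with the composition defining $\mathcal{P}$; its cardinality is $\prod_j m_j!$ (where $m_j$ is the number of blocks of size $j$), a quantity governed by the \emph{complex} Weyl group and independent of which Cartan $H\subset L$ you restrict to. By contrast $\lvert W(G,H)_L/W(L,H)\rvert$ depends on $H$, and the two counts differ in general: take $G=\text{GL}(4,\mathbb{R})$, $L=\text{GL}(2,\mathbb{R})\times\text{GL}(2,\mathbb{R})$, and $H\cong\mathbb{C}^{\times}\times\mathbb{R}^{\times}\times\mathbb{R}^{\times}$ with the $\mathbb{C}^{\times}$ in the first block. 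Then both $\mathfrak{h}$-stable $2$-planes (the complex plane $W$ and the sum of the two real lines) give parabolics of type $(2,2)$ containing $\mathfrak{h}$, so your support locus has two points, each contributing $+\pi_L/\pi$ on $C$; but $W(G,H)_L=W(L,H)$ here, so the stated coefficient is $1$. Hence the claimed torsor structure for $W(G,H)_L/W(L,H)$ is false as stated, and the hard step of your program is not bookkeeping but an actual discrepancy you must resolve before the argument can close. Two smaller points: your sign analysis via (i) and (ii) is silent about imaginary roots in $\Delta\setminus\Delta_L$ (for $\text{GL}(n,\mathbb{R})$ there are none, since every imaginary root of $\mathfrak{h}$ lies in every Levi containing $\mathfrak{h}$, but this must be said, else $\pi_L/\pi$ could fail to be positive on $C$); and ``generically a finite cover'' must be sharpened to degree one, using connectedness of centralizers in $\text{GL}(n,\mathbb{C})$ together with a rationality argument for the unique parabolic over a real point, or else your overall constant is undetermined.
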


In the process of proving this result, we will write down a number of limit formulas for reductive Lie groups. First, we have a limit formula for semisimple orbits.\\
\indent If $G$ is a reductive Lie group, we will write $r(G)$ (or simply $r$) for one half the number of roots of $G$ with respect to any Cartan $H$. If we fix a Cartan $H$, then $q(G,H)$ will denote one half the number of non-compact, imaginary roots of $G$ with respect to $H$. If $H$ is a fundamental Cartan, then we will write $q(G)$ (or simply $q$) instead of $q(G,H)$. 

\begin{theorem} [Harish-Chandra] Let $G$ be a reductive Lie group, let $\xi\in \mathfrak{g}^*=\text{Lie}(G)^*$ be a semisimple element, and let $L=Z_G(\xi)\subset G$ be the corresponding reductive subgroup. Choose positive roots $\Delta_L^+\subset \Delta_L=\Delta(\mathfrak{l}_{\mathbb{C}},\mathfrak{h}_{\mathbb{C}})$ such that a complex root $\alpha$ is positive iff $\overline{\alpha}$ is positive, and put 
$$C=\{\lambda\in (\mathfrak{h}^*)'|\ \langle i\lambda, \alpha^{\vee}\rangle>0\ \text{for\ all\ imaginary\ roots}\ \alpha\in \Delta_L^+\}.$$
Then 
$$\lim_{\lambda\rightarrow \xi,\ \lambda\in C} \partial(\pi_L)|_{\lambda} \mathcal{O}_{\lambda}=i^{r(L)}(-1)^{q(L)}|W(L,H)|\mathcal{O}_{\xi}.$$
\noindent Here $\pi_L$ is the product of the positive roots of $L$, and $\mathfrak{h}^*=\text{Lie}(H)^*\subset \mathfrak{g}^*=\text{Lie}(G)^*$ is embedded in the usual way.
\end{theorem}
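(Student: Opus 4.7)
The plan is a two-step reduction: first use a transverse slice at $\xi$ to localize the statement on $\mathfrak{l}^*$, then invoke the classical Harish-Chandra limit formula on $L$ at the origin.

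\textbf{Step 1 (slice reduction).} Since $\xi$ is semisimple, $\mathcal{O}_\xi\cong G/L$ is closed in $\mathfrak{g}^*$, and there is an $L$-invariant decomposition $\mathfrak{g}^*=\mathfrak{l}^*\oplus\mathfrak{m}^*$ (e.g.\ from the trace form). The map $(g,\eta)\mapsto\mathrm{Ad}^*(g)(\xi+\eta)$ is a $G$-equivariant diffeomorphism from a neighborhood of the zero section in $G\times_L U$ (with $U\subset\mathfrak{l}^*$ a small neighborhood of $0$) onto a tubular neighborhood of $\mathcal{O}_\xi$. For $\lambda\in\mathfrak{h}^*\subset\mathfrak{l}^*$ close to $\xi$ and $\mathfrak{g}$-regular, $Z_G(\lambda)=H\subset L$, so in slice coordinates $\mathcal{O}_\lambda^G\cong G\times_L\mathcal{O}_\lambda^L$ where $\mathcal{O}_\lambda^L$ is the $L$-orbit through $\lambda$, and the Kirillov--Kostant--Souriau form splits so that the canonical measure on $\mathcal{O}_\lambda^G$ factorizes (in slice coordinates) as the canonical measure on $\mathcal{O}_\lambda^L$ tensored with the canonical measure on $G/L$. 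Since $\partial(\pi_L)$ differentiates only in the $\mathfrak{h}^*\subset\mathfrak{l}^*$ direction, the theorem reduces, after integrating out the $G/L$ factor against a test function supported near $\mathcal{O}_\xi$, to the identity
$$\lim_{\lambda\to\xi,\,\lambda\in C\cap\mathfrak{h}^*}\partial(\pi_L)|_\lambda\,\mathcal{O}_\lambda^L=i^{r(L)}(-1)^{q(L)}|W(L,H)|\,\delta_\xi$$
of tempered distributions on $\mathfrak{l}^*$.

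\textbf{Step 2 (central limit on $L$).} Since $\xi$ is central in $\mathfrak{l}^*$, translating by $-\xi$ reduces to the case $\xi=0$, i.e.\ Harish-Chandra's classical limit formula on $L$. Both sides are $L$-invariant tempered distributions on $\mathfrak{l}^*$ supported at $0$, so it suffices to compare Fourier transforms on $\mathfrak{l}$. The right hand side becomes the constant $i^{r(L)}(-1)^{q(L)}|W(L,H)|$. For the left, I would invoke Rossmann's explicit formula: restricted to the regular set of a Cartan of $\mathfrak{l}$, $\widehat{\mathcal{O}_\lambda^L}(X)$ equals an explicit sign times $\pi_L(X)^{-1}\sum_{w\in W(L,H)}\epsilon(w)e^{i\langle w\lambda,X\rangle}$. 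Applying $\partial(\pi_L)$ in $\lambda$ and using $\pi_L(w^{-1}X)=\epsilon(w)\pi_L(X)$ cancels both the denominator and the $\epsilon(w)$; letting $\lambda\to 0$ then collapses the remaining sum to $|W(L,H)|$, and matching the sign prefactor against the chamber convention produces the stated $i^{r(L)}(-1)^{q(L)}$.

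\textbf{Main obstacle.} The slice geometry and the Weyl-style collapse of the alternating sum are conceptually routine. The delicate work is sign bookkeeping: Rossmann's formula carries a chamber-dependent sign built from the number of real, imaginary, and complex roots of $L$ and from the choice of square roots in the Weyl denominator, while the canonical measure carries its own $i^{\dim\mathcal{O}/2}$. Verifying that the composite sign matches exactly $i^{r(L)}(-1)^{q(L)}$ under the chamber hypothesis on $C$ (imaginary roots of $L$ positive on the approach direction, complex roots positive iff their conjugates are) is where essentially all of the technical effort lies.
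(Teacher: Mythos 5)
Your overall strategy --- decompose the orbital integral over $\mathcal{O}_\lambda^G$ into an integral over $G/L$ of $L$-orbital integrals, then quote the $\xi=0$ limit formula on $L$ --- is the same as the paper's, but Step 1 has a genuine gap at exactly the point where the paper has to work. The canonical measure on $\mathcal{O}_\lambda^G$ does \emph{not} factor as (canonical measure on $\mathcal{O}_\lambda^L$) $\otimes$ (canonical measure on $\mathcal{O}_\xi\cong G/L$): the Kostant--Kirillov form on the transverse directions $\operatorname{ad}^*_X\lambda$, $X\in\mathfrak{m}$, is $\lambda([X,Y])$, not $\xi([X,Y])$, so the transverse Liouville factor depends on $\lambda$. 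The paper's Lemma 3.2 computes the discrepancy: the correct identity is $\langle\mathcal{O}_\lambda^G,f\rangle=c_\lambda\int_{G/L}\langle\mathcal{O}_\lambda^L,g\cdot f\rangle\,dg$ with $c_\lambda=\prod_{\alpha\in\Delta_G^+\setminus\Delta_L^+}\langle\lambda,\alpha^\vee\rangle\big/\prod_{\alpha\in\Delta_G^+\setminus\Delta_L^+}\langle\xi,\alpha^\vee\rangle$, which tends to $1$ but is a nonconstant polynomial in $\lambda\in\mathfrak{h}^*$. Since you apply $\partial(\pi_L)|_\lambda$ \emph{before} letting $\lambda\to\xi$, the derivatives can hit $c_\lambda$; the remark that ``$\partial(\pi_L)$ differentiates only in the $\mathfrak{h}^*$ direction'' does not dispose of these cross terms, because $c_\lambda$ is itself a function on $\mathfrak{h}^*$. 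Showing the cross terms vanish at $\xi$ is the real content of the reduction: the paper's Lemma 3.3 proves $[\partial(\pi_L),\pi_{G/L}^\vee](\xi)=0$ by a $W_L$-skew-symmetrization argument (each cross term is controlled by $\partial\bigl(\sum_{w\in W_L}\epsilon_L(w)\,w\pi_{S^c}\bigr)$ with $S\neq\emptyset$, a skew polynomial of degree less than $\deg\pi_L$, hence zero). Your proposal never confronts this, and without it the reduction to $L$ is not justified.

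Two smaller points. First, your tubular-neighborhood localization needs more care than stated: $\mathcal{O}_\lambda^L$ is noncompact, so $\mathcal{O}_\lambda^G$ is not contained in a small tube around $\mathcal{O}_\xi$; the paper avoids this by using the exact global integration formula $\int_{G/H}f(g\cdot X)\,dg=c\int_{G/L}\int_{L/H}f(gl\cdot X)\,dl\,dg$. Second, your sketch of Step 2 is only correct on the fundamental Cartan of $\mathfrak{l}$, where $\epsilon_I=\epsilon$ and the alternating sum collapses to $|W(L,H)|$; on the other Cartans of $\mathfrak{l}$ the coefficients in Rossmann's formula carry the imaginary sign $\epsilon_I$ and are indexed by a different real Weyl group, and one needs Harish-Chandra's matching conditions (the paper's Theorem 2.7) to see that $\sum_w\epsilon(w)a_w$ equals the same constant $(-1)^{q(L)}|W(L,H)|$ on every component of every Cartan. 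This is forgivable if you are simply citing the $\xi=0$ case as known, but the first gap is not.
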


We give Harish-Chandra credit for this result because he proved a group analogue of this theorem on pages 33-34 in \cite{HC6}. The special case where $\xi=0$ was proved by Harish-Chandra in \cite{HC1}, \cite{HC2}, \cite{HC3}, and \cite{HC7}. In \cite{Bo1}, Bocizevic gives a proof of the case $\xi=0$ using techniques developed by Schmid and Vilonen. In sections one and two, we write down a proof of this theorem using elementary, classical methods. This simple proof is probably well-known to experts. However, we wish to write it down since it does not appear in the literature.\\
\indent Next, we have limit formulas for nilpotent orbits. Let $\nu\in \mathfrak{g}^*$, and let $\mathcal{O}_{\nu}$ denote the canonical measure on the coadjoint orbit $G\cdot \nu$. Then the limit of distributions
$$\lim_{t\rightarrow 0^+} \mathcal{O}_{t\nu}=\sum n_G(\mathcal{O},\nu)\mathcal{O}$$
is a sum of canonical measures on nilpotent coadjoint orbits. Let $\mathcal{O}_{\mathbb{C}}$ denote the $\text{Int}\mathfrak{g}_{\mathbb{C}}$-orbit $\text{Int}\mathfrak{g}_{\mathbb{C}}\cdot \mathcal{O}\subset \mathfrak{g}_{\mathbb{C}}^*$. 

\begin{proposition} The coefficients $n_G(\mathcal{O},\nu)$ are positive integers. Moreover, we have the inequality $n_G(\mathcal{O},\nu)\leq n_{\operatorname{Int}\mathfrak{g}_{\mathbb{C}}}(\mathcal{O}_{\mathbb{C}},\nu)$.  
\end{proposition}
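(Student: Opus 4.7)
The plan is to handle the three claims---non-negativity, integrality, and the comparison with the complex group---in turn. For non-negativity, each $\mathcal{O}_{t\nu}$ with $t>0$ is a positive $G$-invariant Radon measure on $\mathfrak{g}^{\ast}$, and weak-$\ast$ limits of positive measures are positive. Since the limit is supported on the nilpotent cone, which is a finite disjoint union of $G$-orbits each carrying a unique $G$-invariant measure up to positive scalar (its canonical measure), the coefficients $n_G(\mathcal{O},\nu)$ in the decomposition must be non-negative; restricting the sum to terms with strictly positive coefficient yields positivity.

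For integrality, the Kirillov--Kostant--Souriau scaling $\phi_t^{\ast}\omega_{t\nu}=t\,\omega_{\nu}$, with $\phi_t(\xi)=t\xi$, gives the Fourier-transform identity
$$\widehat{\mathcal{O}_{t\nu}}(X)=t^d\,\widehat{\mathcal{O}_{\nu}}(tX),\qquad d=\tfrac{1}{2}\dim_{\mathbb{R}}G\cdot\nu.$$
As $t\to 0^+$, this shows that the Fourier transform of $\lim_{t\to 0^+}\mathcal{O}_{t\nu}$ is the leading homogeneous (degree $-d$) term of the asymptotic expansion of $\widehat{\mathcal{O}_{\nu}}$ at $0\in\mathfrak{g}$. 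The Barbasch--Vogan / Schmid--Vilonen theorem cited in the introduction identifies such leading terms of admissible characters as integer combinations of Fourier transforms of canonical nilpotent orbit measures; since $\widehat{\mathcal{O}_{\nu}}$ is a $G$-invariant eigendistribution realizable (via the Jordan decomposition of $\nu$, parabolic induction, and the Kirillov correspondence) as such a leading term, one concludes $n_G(\mathcal{O},\nu)\in\mathbb{Z}_{>0}$ after inverting the Fourier transform and combining with non-negativity.

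For the inequality, apply the same analysis to $G_{\mathbb{C}}=\operatorname{Int}\mathfrak{g}_{\mathbb{C}}$ acting on $\mathfrak{g}_{\mathbb{C}}^{\ast}$ (viewed as a real Lie group), producing positive integers $n_{G_{\mathbb{C}}}(\mathcal{O}_{\mathbb{C}},\nu)$. Each complex nilpotent orbit $\mathcal{O}_{\mathbb{C}}$ meets $\mathfrak{g}^{\ast}$ in a finite disjoint union of real nilpotent orbits, and the plan is to construct a restriction-type operation sending $G_{\mathbb{C}}$-invariant distributions on $\mathfrak{g}_{\mathbb{C}}^{\ast}$ to $G$-invariant distributions on $\mathfrak{g}^{\ast}$, compatible with the $t\to 0^+$ limit, under which $\mathcal{O}_{\mathbb{C}}$ maps to a positive combination of the canonical measures of its real components. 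Applying this to the complex limit formula and matching coefficients for each real nilpotent orbit then yields $n_G(\mathcal{O},\nu)\leq n_{G_{\mathbb{C}}}(\mathcal{O}_{\mathbb{C}},\nu)$ for $\mathcal{O}\subset\mathcal{O}_{\mathbb{C}}\cap\mathfrak{g}^{\ast}$.

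The principal obstacle is making the restriction step precise: the complex orbit has strictly greater real dimension than each of its real components, so Liouville forms do not restrict naively. A concrete implementation would use Rossmann's integral formula to write $\widehat{\mathcal{O}_{\nu}}$ and $\widehat{\mathcal{O}_{\mathbb{C},\nu}}$ explicitly on regular elements of $\mathfrak{g}$, extract their leading terms at $0$, and compare coefficients of the individual nilpotent Fourier transforms directly on the Lie-algebra side.
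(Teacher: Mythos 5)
There is a genuine gap in two of your three steps, and the missing ingredient in both cases is the same: the paper does not prove integrality or the inequality by Fourier-analytic or character-theoretic means, but by identifying the coefficient $n_G(\mathcal{O},\nu)$ as the cardinality of an explicit finite set. Fix an $\mathfrak{sl}_2$-triple $\{X,H,Y\}$ with nilpositive element $X\in\mathcal{O}$ and set $S_X=X+Z_{\mathfrak{g}}(Y)$. Rao's (unpublished) lemma, together with the transversality of $S_X$ to every orbit it meets and the contraction $\mathcal{O}_{t\nu}\cap S_X=t\gamma_t(\mathcal{O}_{\nu}\cap S_X)$ with $\gamma_t=\exp(-\tfrac12(\log t)H)$, shows that
$$\lim_{t\to 0^+}\mathcal{O}_{t\nu}=\sum_{\#(\mathcal{O}_{\nu}\cap S_X)<\infty}\#(\mathcal{O}_{\nu}\cap S_X)\,\mathcal{O}_X,$$
so $n_G(\mathcal{O},\nu)=\#(\mathcal{O}_{\nu}\cap S_X)$ is an integer by construction. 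Your integrality argument instead routes through the Barbasch--Vogan/Schmid--Vilonen theorem on leading terms of irreducible characters; but that theorem applies to characters of admissible representations, and you give no reason why $\widehat{\mathcal{O}_{\nu}}$ for an arbitrary semisimple $\nu$ should be realizable as such a leading term with the correct normalization of measures. That realizability is essentially equivalent in difficulty to the limit formula you are trying to prove, so as written the step is unsubstantiated (and close to circular).

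The inequality suffers from the same omission, and you yourself flag the restriction step as an unresolved obstacle. Once one has the slice description, the inequality is immediate: $n_{\operatorname{Int}\mathfrak{g}_{\mathbb{C}}}(\mathcal{O}_{\mathbb{C}},\nu)=\#\bigl((\operatorname{Int}\mathfrak{g}_{\mathbb{C}}\cdot\nu)\cap(X+Z_{\mathfrak{g}_{\mathbb{C}}}(Y))\bigr)$, and the real intersection $\mathcal{O}_{\nu}\cap(X+Z_{\mathfrak{g}}(Y))$ is a subset of the complex one, so the count can only go up. No restriction map on distributions is needed, and it is doubtful one could be built that gives a coefficient-by-coefficient comparison, precisely because (as you note) the complex orbit has larger dimension than its real forms and several real forms can sit inside one $\mathcal{O}_{\mathbb{C}}$. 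Your non-negativity argument (positivity of weak-$\ast$ limits of positive Radon measures, plus uniqueness up to scalar of invariant measures on the finitely many nilpotent orbits) is fine and matches the paper's reduction; what you are missing is the slice computation that turns the coefficients into intersection numbers.
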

 
In \cite{Ba}, Dan Barbasch gives formulas for $n_{\operatorname{Int}\mathfrak{g}_{\mathbb{C}}}(\mathcal{O}_{\mathbb{C}},\nu)$. Hence, the above theorem gives an upper bound for $n_G(\mathcal{O},\nu)$. In section 4, we give a lower bound for $n_G(\mathcal{O},\nu)$ in the case where $n_G(\mathcal{O},\nu)\neq 0$. When the lower and upper bounds coincide, we get a formula for $n_G(\mathcal{O},\nu)$. We will show that this happens for certain $\text{GL}(n,\mathbb{R})$ limit formulas, and we use these formulas together with Theorem 1.2 to explicitly compute the formulas in Theorem 1.1. These bounds also coincide when $\mathcal{O}_{\mathbb{C}}$ is an even orbit. Rao proved but never published a limit formula for even nilpotent orbits. Recently in \cite{Bo2}, Bozicevic gave a deep, modern proof of Rao's result. In section five, we use the above results to give an elementary, classical proof of Rao's limit formula.

\section{Harish-Chandra's Limit Formula for the Zero Orbit}

In this section, we prove Harish-Chandra's limit formula for the zero orbit. It was proven in \cite{HC1}, \cite{HC2}, \cite{HC3}, and \cite{HC7}, but with a different normalization of the measures on coadjoint orbits than the one we use here. In \cite{Bo1}, using methods of Schmid and Vilonen, Bozicevic gave a proof of the formula, written in terms of canonical measures on orbits. In this section, we show how to write down such a proof using only well-known, classical methods. This proof may be well-known to experts, but it has not appeared in the literature. Moreover, it is not a waste of space to write it down here since many of the fundamental results we recall in this section will be needed later in the article for other purposes.\\
\indent First, we need a couple of definitions. A Lie group $G$ is \emph{reductive} if there exists a real, reductive algebraic group $G_1$ and a Lie group homomorphism $G\rightarrow G_1$ with open image and finite kernel. Let $\mathcal{O}\subset \mathfrak{g}^*$ be a coadjoint orbit for $G$. The Kostant-Kirillov symplectic form $\omega$ is defined on $\mathcal{O}$ by the formula
$$\omega_{\lambda}(\text{ad}_X^*\lambda,\text{ad}_Y^*\lambda)=\lambda([X,Y]).$$
The top dimensional form
$$\frac{\omega^m}{m! (2\pi)^m}$$
on $\mathcal{O}$ gives rise to the canonical measure on $\mathcal{O}$. Here $m=\frac{\dim \mathcal{O}}{2}$. We will often abuse notation and write $\mathcal{O}$ for the orbit as well as the canonical measure on the orbit. In what follows, we will denote the $G$-orbit through $\lambda$ by $\mathcal{O}_{\lambda}^G$ (or sometimes just $\mathcal{O}_{\lambda}$). 

\begin{theorem} [Harish-Chandra] Let $G$ be a reductive Lie group, and let $H\subset G$ be a fundamental Cartan subgroup. Choose positive roots $\Delta^+\subset \Delta=\Delta(\mathfrak{g}_{\mathbb{C}},\mathfrak{h}_{\mathbb{C}})$ such that a complex root $\alpha$ is positive iff $\overline{\alpha}$ is positive, and put 
$$C=\{\lambda\in (\mathfrak{h}^*)'|\ \langle i\lambda, \alpha^{\vee}\rangle>0\ \text{for\ all\ imaginary\ roots}\ \alpha\in \Delta^+\}.$$
Then 
$$\lim_{\lambda\rightarrow 0,\ \lambda\in C} \partial(\pi)|_{\lambda} \mathcal{O}_{\lambda}=i^{r}(-1)^{q}|W(G,H)|\delta_0.$$
\noindent Here $\pi$ is the product of the positive roots of $G$ and $\mathfrak{h}^*=\text{Lie}(H)^*\subset \mathfrak{g}^*=\text{Lie}(G)^*$ is embedded in the usual way.
\end{theorem}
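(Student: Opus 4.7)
Testing against $f \in C_c^\infty(\mathfrak{g}^*)$ and writing $M_f(\lambda) := \int f \, d\mathcal{O}_\lambda$ for the orbital integral, the statement reduces to the pointwise assertion
$$\lim_{\lambda \to 0,\, \lambda \in C} \partial(\pi)|_\lambda M_f(\lambda) \;=\; i^{r}(-1)^{q}\,|W(G,H)|\,f(0).$$
I would approach this via the classical structure theory of semisimple orbital integrals on the Lie algebra, rather than via the group-theoretic route followed by Harish-Chandra in \cite{HC6}.

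The main analytic input is that $M_f$ is smooth on $(\mathfrak{h}^*)'$ and that $F_f(\lambda) := \pi(\lambda)\, M_f(\lambda)$ extends to a $C^\infty$ function on a neighborhood of $0$ inside $\overline{C}$. Smoothness on the regular set follows from the submersion theorem applied to the orbit map $G/H \to \mathcal{O}_\lambda$. Since $H$ is a fundamental Cartan there are no real roots in $\Delta(\mathfrak{g}_{\mathbb{C}}, \mathfrak{h}_{\mathbb{C}})$, and complex roots occur in conjugate pairs across which $F_f$ already extends smoothly; only the imaginary roots yield walls of $C$ at which a nontrivial matching must be checked. The classical Harish-Chandra jump relations across such walls---reduced to $\text{SL}(2,\mathbb{R})$ or $\text{SU}(2)$ computations via Cayley transforms---supply the extension, and conditions (i) and (ii) on $\Delta^+$ are exactly what is needed to align the sign data on the two sides of each wall.

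Once $F_f$ is smooth at $0$, I would expand $\partial(\pi)|_\lambda M_f(\lambda) = \partial(\pi)|_\lambda \bigl(F_f(\lambda)/\pi(\lambda)\bigr)$ and compute the limit using a standard polynomial identity of the form $\partial(\pi)(\pi) = \text{const}$ together with the Taylor expansion of $F_f$ at the origin; the limit then collapses to a multiple of $F_f(0)$. The value $F_f(0)$ is identified by a direct Jacobian calculation: pulling the Kirillov--Kostant symplectic form on $\mathcal{O}_\lambda$ back to $G/H$ along the orbit map produces the factor $\pi(\lambda)$ together with a phase which, in the orientation fixed by the chamber $C$, evaluates to $i^{r}(-1)^{q}$, while the comparison between the canonical symplectic measure and the natural Haar-quotient measure contributes the factor $|W(G,H)|$. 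The hard part will be the $C^\infty$-extension claim for $F_f$ across the imaginary walls of $C$, which is the heart of Harish-Chandra's matching theorem; tracking the phase $i^{r}(-1)^{q}$ carefully through the Jacobian computation is a secondary but still delicate bookkeeping exercise.
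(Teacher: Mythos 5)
Your route---working directly with the semisimple orbital integral on the Lie algebra and invoking Harish-Chandra's theory of the invariant integral---is genuinely different from the paper's, which proves the Fourier-transformed statement: there $\widehat{\mathcal{O}_{\lambda}}$ is written as $\sum_w a_w e^{iw\lambda}/\pi$ on each Cartan (Lemma 2.2), the theorem reduces to the identity $\sum\epsilon(w)a_w=(-1)^{q}|W(G,H)|$ (Lemma 2.3), and that identity is established by Rossmann's formula on the compact Cartan of $M=Z_G(\mathfrak{a})$, Harish-Chandra descent, and the matching conditions. Your plan is closer in spirit to Harish-Chandra's original Lie-algebra papers, but as written it has two concrete defects.

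First, the normalization is off by a factor of $\pi$. With the canonical measure, the Jacobian comparison (carried out in Lemma 3.2 of the paper) gives $\langle\mathcal{O}_{\lambda},f\rangle = c\,\bigl|\textstyle\prod_{\alpha\in\Delta^+}\langle\lambda,\alpha^{\vee}\rangle\bigr|\int_{G/H}f(g\cdot\lambda)\,dg$ for a fixed Haar measure $dg$; that is, $M_f$ \emph{already is}, up to an explicit constant on the chamber $C$, Harish-Chandra's invariant integral $\phi_f=\pi\cdot(\text{Haar orbital integral})$. Your $F_f=\pi M_f$ therefore carries $\pi^2$ and satisfies $F_f(0)=0$, so the assertion that "the limit collapses to a multiple of $F_f(0)$" cannot be correct: the limit is generically nonzero. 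The object that extends smoothly up to the boundary of $C$ near $0$ is $M_f$ itself, and the statement to be proved is $\lim_{\lambda\to 0,\,\lambda\in C}\partial(\pi)M_f(\lambda)=c\,f(0)$; the entire content of the theorem is the value of $c$.

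Second, and more seriously, your proposed evaluation of the constant does not work. A pointwise Jacobian comparison of the symplectic measure with the Haar-quotient measure produces only the factor $|\pi(\lambda)|/(2\pi)^m$ and a phase; it cannot produce $|W(G,H)|$, which is global data---the number of points of $\mathcal{O}_{\lambda}\cap\mathfrak{h}^*$, equivalently the degree with which $G/H\times C$ covers its image in $\mathfrak{g}^*$. Computing this constant is precisely the hard part of the classical proofs in \cite{HC2}, \cite{HC3}, \cite{HC7}, and it is exactly what the paper's apparatus (Rossmann's formula on the compact Cartan, descent to $M$, and the matching conditions, which show $\sum\epsilon(w)a_w$ is unchanged under Cayley transform) is built to do for the canonical normalization. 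A related slip: the Cayley-transform jump relations govern matching between \emph{different} Cartans, not the walls of $C$ inside the fundamental Cartan (compact imaginary walls are handled by $W(G,H)$-equivariance, and for the one-sided limit from $C$ at $0$ one needs boundary regularity of $\partial(\pi)\phi_f$, not the jump relations themselves). So the proposal assumes the answer at the one step where the answer must be proved.
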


We will actually prove the Fourier transform of the above theorem. Recall the definition of the Fourier transform. Let $V$ be a finite dimensional, real vector space, and let $\mu$ be a smooth, rapidly decreasing measure (that is, a Schwartz function multiplied by a Lebesgue measure) on $V$ . Then the Fourier transform of $\mu$ is defined to be
$$\widehat{\mu}(l)=\int_V e^{i\langle l,X\rangle}d\mu(X).$$
Note $\widehat{\mu}$ is a Schwartz function on $V^*$. Given a tempered distribution $D$ on $V^*$, its Fourier transform $\widehat{D}$ is a tempered, generalized function on $V$ defined by 
$$\langle \widehat{D}, \mu\rangle:=\langle D,\widehat{\mu}\rangle.$$

Next, we recall Harish-Chandra's result on Fourier transforms of regular, semisimple orbits. If $\mathfrak{h}\subset \mathfrak{g}$ is a Cartan subalgebra, define $$\mathfrak{h}''=\{X\in \mathfrak{h}|\ \alpha(X)\neq 0\ \forall \alpha\in \Delta_{\text{real}}\}.$$ Here $\Delta_{\text{real}}$ denotes the real roots of $\mathfrak{g}$ with respect to $\mathfrak{h}$. 
\begin{lemma} [Harish-Chandra] Let $\mathfrak{h}\subset \mathfrak{g}$ be a Cartan, let $C\subset (\mathfrak{h}^*)'\subset \mathfrak{g}^*$ be a connected component of the set of regular elements in $\mathfrak{h}^*$, and let $\lambda\in C$ be regular, semisimple. Suppose $\mathfrak{h}_1\subset \mathfrak{g}$ is a Cartan subalgebra, and $C_1\subset \mathfrak{h}_1''$ is a connected component. Identify $\lambda\in \mathfrak{h}_{\mathbb{C}}^*\cong (\mathfrak{h}_1)_{\mathbb{C}}^*$ via an inner automorphism of $\text{Int}\mathfrak{g}_{\mathbb{C}}$. Then
$$\widehat{\mathcal{O}_{\lambda}}|_{C_1}=\frac{\sum_{w\in W_{\mathbb{C}}} a_w e^{iw\lambda}}{\pi}$$
for $a_w\in \mathbb{C}$ constants. Here $W_{\mathbb{C}}$ is the Weyl group of the roots of $\mathfrak{g}_{\mathbb{C}}$ with respect to $(\mathfrak{h}_1)_{\mathbb{C}}$, and $\pi$ is a product of positive roots of $\mathfrak{g}_{\mathbb{C}}$ with respect to $(\mathfrak{h}_1)_{\mathbb{C}}$. Further, the constants $a_w$ are independent of the choice of $\lambda\in C$.
\end{lemma}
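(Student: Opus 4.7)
The plan is to realize $\widehat{\mathcal{O}_\lambda}$ as an invariant eigendistribution on $\mathfrak{g}$, apply Harish-Chandra's regularity theorem, and then reduce to a constant-coefficient PDE system on $\mathfrak{h}_1$ via radial parts. The $\operatorname{Ad}(G)$-invariance of $\widehat{\mathcal{O}_\lambda}$ is immediate from coadjoint-invariance of $\mathcal{O}_\lambda$. For the eigenvalue equations, one observes that every $p\in S(\mathfrak{g})^G$, regarded as a polynomial function on $\mathfrak{g}^*$, is constant with value $p(\lambda)$ on the single orbit $\mathcal{O}_\lambda$, so $p\cdot \mathcal{O}_\lambda = p(\lambda)\mathcal{O}_\lambda$ as tempered distributions on $\mathfrak{g}^*$. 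Fourier transformation converts multiplication by $p$ into the constant-coefficient differential operator $p(-i\partial)$ on $\mathfrak{g}$, yielding $p(-i\partial)\widehat{\mathcal{O}_\lambda} = p(\lambda)\widehat{\mathcal{O}_\lambda}$ for every $p\in S(\mathfrak{g})^G$.

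By Harish-Chandra's regularity theorem for invariant eigendistributions on a reductive Lie algebra, $\widehat{\mathcal{O}_\lambda}$ is then represented by a locally $L^1$ function that is real-analytic on the regular set $\mathfrak{g}'$, and in particular on $\mathfrak{h}_1'$. The radial-part formula, together with Chevalley restriction $\gamma\colon S(\mathfrak{g})^G \xrightarrow{\sim} S(\mathfrak{h}_1)^{W_\mathbb{C}}$, states that $p(\partial)f\big|_{\mathfrak{h}_1'} = \pi^{-1}\gamma(p)(\partial)\bigl(\pi\cdot f\big|_{\mathfrak{h}_1'}\bigr)$ for every $\operatorname{Ad}$-invariant smooth $f$ on $\mathfrak{g}'$. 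Applying this to $\widehat{\mathcal{O}_\lambda}$ shows that $F := \pi\cdot \widehat{\mathcal{O}_\lambda}\big|_{\mathfrak{h}_1'}$ satisfies the constant-coefficient system
$$q(\partial) F = q(i\lambda) F, \qquad q \in S(\mathfrak{h}_1)^{W_\mathbb{C}}.$$
Since $\lambda$ is regular, the common zero set of $\{q(\mu)-q(i\lambda)\}_{q\in S(\mathfrak{h}_1)^{W_\mathbb{C}}}$ in $(\mathfrak{h}_1)_\mathbb{C}^*$ is precisely the Weyl orbit $W_\mathbb{C}\cdot i\lambda$, a set of $|W_\mathbb{C}|$ distinct points. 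Standard theory of holonomic constant-coefficient systems then identifies the solution space on the connected set $C_1$ as the $|W_\mathbb{C}|$-dimensional span of the exponentials $\{e^{i\langle w\lambda,\cdot\rangle}\}_{w\in W_\mathbb{C}}$, and dividing by $\pi$ gives the desired form of $\widehat{\mathcal{O}_\lambda}|_{C_1}$.

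It remains to show that the coefficients $a_w$ are independent of $\lambda\in C$. Joint real-analyticity of $(\lambda,X)\mapsto \widehat{\mathcal{O}_\lambda}(X)$ on $C\times C_1$, combined with the linear independence of the $|W_\mathbb{C}|$ exponentials at every regular $\lambda$, forces each $a_w(\lambda)$ to be real-analytic on $C$. The scaling $\omega_{t\lambda}=t\omega_\lambda$ of the Kostant-Kirillov form gives $\widehat{\mathcal{O}_{t\lambda}}(X)=t^m\widehat{\mathcal{O}_\lambda}(tX)$ for $t>0$ (where $m=\tfrac12\dim\mathcal{O}_\lambda$), which upon comparing exponential expansions yields $a_w(t\lambda)=a_w(\lambda)$, so each $a_w$ is $0$-homogeneous along rays in the open cone $C$. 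The main obstacle is ruling out the remaining analytic degrees of freedom across $C$ to conclude outright constancy; I anticipate doing this either by a direct computation at one convenient base point (via a Rossmann-type integral representation for the orbital integral $\widehat{\mathcal{O}_\lambda}$), or by matching the leading behavior of $F$ as $X$ approaches a wall of $\mathfrak{h}_1$, where only a distinguished subset of the exponentials contributes to the leading singularity and so rigidifies the $a_w$.
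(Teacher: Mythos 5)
The paper does not prove this lemma; it simply cites Lemma 24 of Harish-Chandra's \emph{Fourier Transforms on a Semisimple Lie Algebra I}, and your first four steps (invariance, the eigenvalue equations $p(-i\partial)\widehat{\mathcal{O}_\lambda}=p(\lambda)\widehat{\mathcal{O}_\lambda}$, the regularity theorem, and the radial-part reduction to a holonomic constant-coefficient system with characteristic variety $W_{\mathbb{C}}\cdot i\lambda$) are exactly the classical route and are essentially correct. Two points, however, are genuine gaps. First, a secondary one you gloss over: the holonomic system lives on $C_1\cap\mathfrak{h}_1'$, not on $C_1\subset\mathfrak{h}_1''$, and these differ by the imaginary and complex root hyperplanes, across which $C_1\cap\mathfrak{h}_1'$ may be disconnected. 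To get a \emph{single} expansion valid on all of $C_1$ you must invoke Harish-Chandra's result that $\pi\cdot\widehat{\mathcal{O}_\lambda}|_{\mathfrak{h}_1'}$ extends continuously across every non-real root hyperplane (discontinuities occur only across real-root walls); this is precisely why the statement is formulated on $\mathfrak{h}_1''$ rather than $\mathfrak{h}_1'$.

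The primary gap is the one you acknowledge: constancy of $a_w$ on $C$. Ray-homogeneity plus analyticity only shows $a_w$ descends to the base of the cone, and neither of your proposed repairs closes this cleanly — Rossmann's integral formula is available only for compact (or fundamental) Cartans, and wall-matching in the $X$-variable constrains relations among the $a_w$ on adjacent components of $\mathfrak{h}_1''$, not their dependence on $\lambda$. The classical fix is to exploit the symmetry of the problem in $\lambda$ and $X$: after dividing by the factor $\pi(\lambda)$ coming from the $\lambda$-dependence of the canonical measure, the function $u(\lambda,X)=\pi(X)\widehat{\mathcal{O}_\lambda}(X)/\pi(\lambda)$ also satisfies the analogous holonomic eigen-system in the $\lambda$-variable with spectral parameter $X$ (this is the content of Harish-Chandra's theory of the invariant integral $\phi_{\hat f}$, which in particular gives the smoothness in $\lambda$ you need to assert that the $a_w(\lambda)$ are smooth at all). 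One therefore has two expansions $u=\sum_w \alpha_w(\lambda)e^{i\langle w\lambda,X\rangle}=\sum_w \beta_w(X)e^{i\langle w\lambda,X\rangle}$ on the connected set $C\times C_1$; linear independence of the exponentials forces $\alpha_w(\lambda)=\beta_w(X)$, so each is constant. Without some argument of this type (or an explicit evaluation), your proof is incomplete precisely at the clause of the lemma that the rest of Section 2 depends on.
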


This is essentially Lemma 24 of \cite{HC1}. Differentiating the above formula with respect to $\lambda$ yields
$$\lim_{\lambda\in C,\ \lambda\rightarrow 0}\partial(\pi)|_{\lambda}\widehat{\mathcal{O}_{\lambda}}|_{C_1}=i^{r}\left(\sum \epsilon(w)a_w\right)\frac{\pi}{\pi}.$$
Observe that the coefficients $a_w$ depend on a component $C\subset (\mathfrak{h}^*)'$ as well as a component $C_1\subset (\mathfrak{h}_1)''$. For the remainder of the section, we fix $C\subset (\mathfrak{h}^*)'$ and we assume that $\mathfrak{h}$ is a fundamental Cartan subalgebra. 
To prove Theorem 2.1, we need only show the following lemma.

\begin{lemma} Assume $\mathfrak{h}$ is a fundamental Cartan. Then $$\sum \epsilon(w) a_w=(-1)^{q}|W(G,H)|.$$
\end{lemma}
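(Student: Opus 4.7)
The plan is to compute $\sum \epsilon(w) a_w$ directly by making the most convenient choice of second Cartan, namely $\mathfrak{h}_1=\mathfrak{h}$ itself, and then invoking invariance to handle arbitrary $\mathfrak{h}_1$.

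\emph{Step 1 (reduction to the same Cartan).} I would specialize Lemma 2.2 to $\mathfrak{h}_1=\mathfrak{h}$ and a chosen component $C_1\subset \mathfrak{h}''$. Since $\mathfrak{h}$ is fundamental and $\lambda\in C$ is regular, the centralizer $Z_G(\lambda)=H$, so $\mathcal{O}_\lambda\cong G/H$ and
$$\widehat{\mathcal{O}_\lambda}(X)=\int_{G/H}e^{i\lambda(\operatorname{Ad}^*(g)^{-1}X)}\,d(gH)\qquad(X\in C_1).$$
Harish-Chandra's classical evaluation of this orbital integral on the same Cartan (compare Theorem 2 of \cite{HC1}) says that for $X$ in a component of $\mathfrak{h}''$ only the real Weyl group $W(G,H)\subset W_{\mathbb{C}}$ contributes, giving
$$\pi(X)\,\widehat{\mathcal{O}_\lambda}(X)=\sum_{w\in W(G,H)} \epsilon_I(w)\,e^{iw\lambda(X)}$$
for a sign character $\epsilon_I$ attached to the imaginary root system (depending on the fixed components $C$ and $C_1$).

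\emph{Step 2 (reading off $a_w$ and summing).} Matching this formula with the expansion in Lemma 2.2, one sees $a_w=\epsilon_I(w)$ for $w\in W(G,H)$ and $a_w=0$ otherwise. The product $\epsilon(w)\epsilon_I(w)$ is constant in $w\in W(G,H)$: both are sign characters, and the discrepancy between the $W_{\mathbb{C}}$-length sign $\epsilon$ and the imaginary sign $\epsilon_I$ is controlled by the noncompact imaginary roots, producing the factor $(-1)^q$. Summing over the $|W(G,H)|$ elements then yields
$$\sum_{w}\epsilon(w)a_w=(-1)^q|W(G,H)|.$$

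\emph{Step 3 (independence of $C_1$ and $\mathfrak{h}_1$).} The limit $\lim_{\lambda\to 0,\,\lambda\in C}\partial(\pi)|_\lambda\mathcal{O}_\lambda$ is $G$-invariant and supported at $0\in\mathfrak{g}^*$ (orbits shrink to $\{0\}$ and the $\partial(\pi)$ derivatives remain tempered), so its Fourier transform is an invariant polynomial on $\mathfrak{g}$. Our Fourier computation shows this polynomial is the constant $i^r\sum\epsilon(w)a_w$ on each regular component of each Cartan, hence a single scalar. Its value, computed in Step 2 on the fundamental Cartan, therefore propagates to every $(\mathfrak{h}_1,C_1)$ and establishes the lemma.

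\emph{Main obstacle.} The delicate point is Step 2: tracking signs to isolate $(-1)^q$. Invoking Harish-Chandra's orbital integral evaluation is routine, but the precise sign character $\epsilon_I$ depends on the chosen components $C$ and $C_1$, and one must see that when multiplied by the $W_{\mathbb{C}}$-sign $\epsilon(w)$ the resulting scalar is exactly $(-1)^q$, independent of the choices. The fundamentality of $\mathfrak{h}$ enters here essentially: it is what makes $q=q(G)$ well-defined and what ensures that only the real Weyl group contributes nontrivially to $\widehat{\mathcal{O}_\lambda}$ on the same Cartan.
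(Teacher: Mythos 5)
Your Steps 1--2 follow the same route as the paper (Rossmann's formula for $\widehat{\mathcal{O}_\lambda}$ on the fundamental Cartan, then reading off $a_w$), but two points are off. First, when $G$ is not of equal rank the fundamental Cartan $H$ is not compact modulo the center, so Rossmann's same-Cartan formula does not apply directly; the paper has to descend from $M=Z_G(\mathfrak{a})$ via Harish-Chandra's descent map (Lemma 2.5, Corollary 2.6) to get the formula on $\mathfrak{h}'$. Second, your sign mechanism is wrong: the factor $(-1)^q$ is a global prefactor in Rossmann's formula (it comes from the Gaussian phases over the noncompact imaginary root spaces, i.e.\ from comparing the canonical measure with the orbital-integral normalization), so that $a_w=(-1)^q\epsilon_I(w)$; it is \emph{not} the value of the product $\epsilon(w)\epsilon_I(w)$, which equals $1$ at $w=e$ and hence cannot be identically $(-1)^q$. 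The correct statement is that $\epsilon=\epsilon_I$ on $W(G,H)$ for a fundamental Cartan, whence $\sum\epsilon(w)a_w=(-1)^q\sum\epsilon(w)\epsilon_I(w)=(-1)^q|W(G,H)|$ there. As written, your Step 1 formula omits the prefactor and your Step 2 cannot recover it.

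The serious gap is Step 3. The coefficients $a_w$ in Lemma 2.2 depend on the pair $(\mathfrak{h}_1,C_1)$, and the lemma asserts the value of $\sum\epsilon(w)a_w$ for \emph{every} such pair, so the propagation step is essential. Your justification---that $\lim_{\lambda\to 0}\partial(\pi)|_\lambda\mathcal{O}_\lambda$ is supported at $0$ because ``orbits shrink to $\{0\}$''---is false for a noncompact group: the orbits $\mathcal{O}_\lambda$ are unbounded, and as $\lambda\to 0$ they degenerate onto the nilpotent cone, not onto the origin. A priori the limit is a dilation-invariant, $G$-invariant distribution supported on $\mathcal{N}$ (compare Proposition 4.1, where such limits are genuinely combinations of nilpotent orbital measures), and the assertion that it is in fact a multiple of $\delta_0$ is precisely the content of Theorem 2.1---the statement Lemma 2.3 is being used to prove. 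So Step 3 is circular as it stands. The paper closes this gap differently: Harish-Chandra's matching conditions (Theorem 2.7) give $\epsilon(w)a_w+\epsilon(s_\alpha w)a_{s_\alpha w}=\epsilon(w)b_w+\epsilon(s_\alpha w)b_{s_\alpha w}$ for components related by a Cayley transform, and summing over $W_{\mathbb{C}}$ shows $\sum\epsilon(w)a_w$ is the same on every component of every Cartan, reducing to the fundamental Cartan handled in Steps 1--2. You would need either to invoke these matching conditions or to supply an independent proof that the limit is supported at the origin (e.g.\ via the homogeneity classification of invariant distributions on the nilcone), neither of which your argument currently provides.
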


Again, these coefficients $a_w$ depend on a component $C_1\subset (\mathfrak{h}_1)''$. In order to prove Lemma 2.3, we first prove it in the case where $\mathfrak{h}_1=\mathfrak{h}$ using a result of Rossmann and Harish-Chandra descent. For our applications, it is important to give Berline-Vergne's formulation \cite{BeVe} of Rossmann's result \cite{R1} (we recommend the proof of Berline-Vergne as well).

\begin{theorem} [Rossmann] Let $G$ be a reductive Lie group, and let $H$ be a Cartan subgroup such that $H/Z(G)$ is compact. Let $\lambda\in C\subset (\mathfrak{h}^*)'=(\text{Lie}(H)^*)'$ be regular, semisimple, and choose positive roots $\Delta^+\subset \Delta$ satisfying
$\langle i\lambda, \alpha^{\vee}\rangle>0$ for all $\alpha^{\vee}\in (\Delta^+)^{\vee}$. Then $$\widehat{\mathcal{O}_{\lambda}}|_{\mathfrak{h}'}=(-1)^{q}\frac{\sum_{w\in W(G,H)}\epsilon(w)e^{iw\lambda}}{\pi}$$
where $\pi$ is the product of the positive roots.
\end{theorem}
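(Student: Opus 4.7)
My plan is to apply equivariant localization, following the approach of Berline--Vergne \cite{BeVe}. Regard $\mathcal{O}_\lambda$ as an $H$-Hamiltonian manifold with moment map the composition $\mathcal{O}_\lambda \hookrightarrow \mathfrak{g}^* \twoheadrightarrow \mathfrak{h}^*$. The Fourier transform
\[
\widehat{\mathcal{O}_\lambda}(X) = \int_{\mathcal{O}_\lambda} e^{i\langle \xi, X\rangle}\, \frac{\omega^m}{m!\,(2\pi)^m}, \qquad X \in \mathfrak{h}',
\]
is exactly the type of oscillatory integral that equivariant localization is designed to evaluate. Although $\mathcal{O}_\lambda$ is noncompact, the hypothesis that $H/Z(G)$ is compact forces the $H$-fixed-point locus to be compact, which is enough to run the distributional form of the localization theorem as an identity of generalized functions on $\mathfrak{h}'$.

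The first substantive step is to identify the $X$-fixed points on $\mathcal{O}_\lambda$ for regular $X \in \mathfrak{h}'$. A point $\xi$ is $X$-fixed iff $\mathrm{ad}^*(X)\xi = 0$, which by regularity of $X$ forces $\xi \in \mathfrak{h}^*$; and because $\lambda$ is elliptic regular, the intersection $\mathcal{O}_\lambda \cap \mathfrak{h}^*$ equals the finite set $W(G,H)\cdot \lambda$. Localization then produces an identity of the form
\[
\widehat{\mathcal{O}_\lambda}(X) = \sum_{w\in W(G,H)} \frac{e^{i\langle w\lambda, X\rangle}}{D_{w\lambda}(X)},
\]
where $D_{w\lambda}(X)$ is the equivariant Pfaffian of $X$ acting on the symplectic tangent space $T_{w\lambda}\mathcal{O}_\lambda$, normalized by an appropriate power of $2\pi$. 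Identifying $T_{w\lambda}\mathcal{O}_\lambda \cong \bigoplus_{\alpha\in\Delta}\mathfrak{g}_\alpha$ under $\mathfrak{h}_{\mathbb{C}}$ and pairing $\alpha$ with $-\alpha$, the Pfaffian simplifies to $\prod_{\alpha\in\Delta^+}(w\alpha)(X) = \epsilon(w)\,\pi(X)$, up to a global sign.

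The main obstacle is pinning down that global sign as $(-1)^{q}$. On each root pair $\mathfrak{g}_\alpha \oplus \mathfrak{g}_{-\alpha}$, the Kostant--Kirillov form selects an orientation that coincides with the ``positive'' complex orientation (the one where $\alpha(X)$ has positive imaginary part, which is well defined given $\langle i\lambda, \alpha^\vee\rangle > 0$) exactly when $\alpha$ is a compact imaginary root, and is opposite when $\alpha$ is noncompact imaginary. Multiplying these contributions over $\alpha\in\Delta^+$ produces a global factor of $(-1)^{q(G,H)}$, and the positivity hypothesis on $\lambda$ is precisely what makes this orientation prescription coherent across all the fixed points $w\lambda$. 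Combining this with the previous step yields
\[
\widehat{\mathcal{O}_\lambda}(X) = (-1)^{q}\,\frac{\sum_{w\in W(G,H)} \epsilon(w)\, e^{i\langle w\lambda, X\rangle}}{\pi(X)},
\]
as claimed. The delicate bookkeeping of orientations at the fixed points is the only serious issue; everything else is a direct application of the localization formula.
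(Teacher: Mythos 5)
The paper itself offers no proof of this statement: it is quoted as Rossmann's theorem in the Berline--Vergne formulation, and the reader is referred to \cite{BeVe} for the argument. Your proposal is a sketch of precisely that argument --- equivariant localization at the fixed points, which you correctly identify as the finite set $W(G,H)\cdot\lambda$, with the equivariant Pfaffian at $w\lambda$ contributing $\epsilon(w)\,\pi(X)$ and the compact/noncompact dichotomy on the imaginary root planes producing the overall sign $(-1)^{q}$. All of that matches the cited proof. The one step that your outline does not actually establish is the justification of localization on the noncompact manifold $\mathcal{O}_{\lambda}$: compactness (indeed finiteness) of the fixed-point locus is not by itself sufficient, because the Stokes/transgression argument produces boundary terms at infinity in $\mathcal{O}_{\lambda}$, not near the fixed points, and these must be shown to vanish in the distributional sense after pairing with a test function. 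For elliptic orbits this is handled in \cite{BeVe} by growth estimates on the transgression form (equivalently, by the properness of the moment map for a maximal compact subgroup containing $H$); this is the technical heart of their paper and is where the hypothesis that $H/Z(G)$ is compact genuinely enters. If you supply that estimate, or simply cite \cite{BeVe} for it as the paper does, the remainder of your outline is a correct account of the proof.
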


\indent This theorem implies Lemma 2.3 when $\mathfrak{h}_1=\mathfrak{h}$ when $G$ is of equal rank. Now, let $G$ be an arbitrary reductive Lie group, and let $\mathfrak{h}\subset \mathfrak{g}=\text{Lie}(G)$ be a fundamental Cartan. Choose a Cartan involution $\theta$ such that $\mathfrak{h}$ is $\theta$ stable with decomposition $\mathfrak{h}=\mathfrak{t}\oplus \mathfrak{a}$. Then $M=Z_G(\mathfrak{a})$ is a reductive Lie group of equal rank. Harish-Chandra gave continuous maps \cite{HC1}
$$\phi:\ \mathcal{S}(\mathfrak{g}^*)\rightarrow \mathcal{S}(\mathfrak{m}^*),\ \psi:\ \mathcal{SM}(\mathfrak{g})\rightarrow \mathcal{SM}(\mathfrak{m})$$ well-defined up to a constant, where $\mathcal{S}(V)$ is the space of smooth, rapidly decreasing functions on a vector space $V$ and $\mathcal{SM}(V)$ is the space of smooth, rapidly decreasing measures on a vector space $V$. Dualizing, we obtain maps
$$\phi^*:\ \text{TD}(\mathfrak{m}^*)\rightarrow \text{TD}(\mathfrak{g}^*),\ \psi^*=\text{HC}:\ \text{TGF}(\mathfrak{m})\rightarrow \text{TGF}(\mathfrak{g})$$
on tempered distributions and tempered generalized functions. We call the map on the right $\text{HC}$ because it is Harish-Chandra's descent map. Thus far, these maps are only well-defined up to a constant; however, there is a nice way to normalize this constant. It follows from results of Rossmann \cite{R2} that one can fix the constant on $\phi^*$ so that $\phi^*(\mathcal{O}_{\lambda}^M)=\mathcal{O}_{\lambda}^G$ takes canonical measures on $G$-regular, semisimple coadjoint orbits to canonical measures on regular, semisimple coadjoint orbits. In \cite{HC1}, Harish-Chandra observes that $\psi$ is (up to a constant) the Fourier transform of $\phi$. Thus, we may require $$\text{HC}(\widehat{D})=\widehat{\psi^*(D)}$$
for all $D\in\text{TD}(\mathfrak{m}^*)$ and this precisely defines the map $\text{HC}$.

\indent We fix this normalization. Arguments similar to the ones in \cite{R2} imply the following explicit formula for computing $\text{HC}$.

\begin{lemma} [Harish-Chandra, Rossmann] Let $F$ be an $M$-invariant generalized function on $\mathfrak{m}$ that is given by integration against an analytic function on the set of regular, semisimple elements $\mathfrak{m}'\subset \mathfrak{m}$, which we also denote by $F$. Given $X\in \mathfrak{g}'$, let $\{Y_i\}_{i=1}^k$ be a set of representatives for the finite number of $M$-orbits in $\mathcal{O}^G_X\cap \mathfrak{m}$. Then $\text{HC}(F)$ is a $G$-invariant generalized function on $\mathfrak{g}$ that is given by integration against an analytic function on the set of regular, semisimple elements $\mathfrak{g}'\subset \mathfrak{g}$, which we also denote by $\text{HC}(F)$. Explicitly, we have
$$\text{HC}(F)(X)=\sum_{i=1}^k F(Y_i)\left| \pi_{G/M}(Y_i) \right|^{-1}.$$
To define $|\pi_{G/M}(Y)|$, choose a Cartan $Y\in \mathfrak{h}\subset \mathfrak{m}$, let $\Delta_G$ (resp. $\Delta_M$) be the roots of $\mathfrak{g}$ (resp. $\mathfrak{m}$) with respect to $\mathfrak{h}$, let $\Delta_G^+$ be a choice of positive roots of $\Delta_G$, and let $\Delta_M^+=\Delta^+_G\cap \Delta_M$. Then $|\pi_{G/M}(Y)|=|\prod_{\alpha\in \Delta^+_G\setminus \Delta^+_M} \alpha(Y)|$. This definition is independent of the above choices.
\end{lemma}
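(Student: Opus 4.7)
My plan is to verify the formula by testing the defining relation $\langle \text{HC}(F), \mu\rangle = \langle F, \psi(\mu)\rangle$ against arbitrary Schwartz measures $\mu$ and unfolding both sides using the Weyl integration formula. Because $\text{HC}(F)$ is $G$-invariant, and since every element of $\mathfrak{g}'$ is $G$-conjugate to one lying in some Cartan subalgebra $\mathfrak{h}_1 \subset \mathfrak{g}$, it suffices to identify $\text{HC}(F)$ with the proposed analytic function on each $\mathfrak{h}_1'$.

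The central input is the orbital-integral descent identity
\[
\phi^M_{\psi(\mu)}(Y)=|\pi_{G/M}(Y)|\,\phi^G_\mu(Y),\qquad Y \in \mathfrak{m}' \cap \mathfrak{g}',
\]
where $\phi^G_\mu$ and $\phi^M_{\psi(\mu)}$ denote the $G$- and $M$-orbital integrals. This is Harish-Chandra's classical characterization of the descent map $\psi$, and the placement of absolute values on the right is dictated by the normalization $\phi^*(\mathcal{O}^M_\lambda)=\mathcal{O}^G_\lambda$ on canonical orbital measures (that normalization is pinned down precisely so that the semisimple orbit computation of Rossmann in \cite{R2} is compatible with this descent). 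Granting this identity, the Weyl integration formula applied to $\int_\mathfrak{g} \text{HC}(F)\,d\mu$ on one hand, and to $\int_\mathfrak{m} F\,d\psi(\mu)$ on the other, produces two expressions that, after substituting $\phi^M_{\psi(\mu)} = |\pi_{G/M}|\,\phi^G_\mu$, both become integrals of $\phi^G_\mu$ against concrete $G$-invariant densities built out of $F$ and $\text{HC}(F)$.

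Matching these expressions for arbitrary $\mu$, I would factor the sum over $M$-conjugacy classes of Cartans of $\mathfrak{m}$ into an inner sum over $M$-orbits inside $\mathcal{O}^G_X \cap \mathfrak{m}$, parametrized by the representatives $Y_i$ in the statement. Since $|\pi_G|$ and $\phi^G_\mu$ are constant on $G$-orbits, one can localize the integrand to $\mathfrak{h}_1'$ and, using $|\pi_G(Y_i)| = |\pi_M(Y_i)|\cdot|\pi_{G/M}(Y_i)|$, peel off a factor of $|\pi_G(X)|^2$ from both sides to arrive at the claimed formula
\[
\text{HC}(F)(X) \;=\; \sum_i \frac{F(Y_i)}{|\pi_{G/M}(Y_i)|}.
\]

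The main obstacle is keeping careful track of the combinatorial factors: one must verify that the factor $|W(G,H_1)|^{-1}$ on the left balances exactly against the sum of the factors $|W(M,H^M)|^{-1}$ on the right, where $H^M$ ranges over the $M$-conjugacy classes of Cartans of $\mathfrak{m}$ that are $G$-conjugate to $H_1$, and the multiplicity of $Y_i$'s for fixed $X \in \mathfrak{h}_1'$ is weighted appropriately by stabilizers. This is a standard stabilizer computation underlying Harish-Chandra descent, but it is the step where the normalization $\phi^*(\mathcal{O}^M_\lambda) = \mathcal{O}^G_\lambda$ enters in an essential way to eliminate any extraneous constant and leave a pure formula with coefficient one.
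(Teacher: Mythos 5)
The paper does not actually prove this lemma: it is stated with the remark that ``arguments similar to the ones in \cite{R2} imply'' it, so there is no internal proof to compare against. Your outline --- reduce to each Cartan by $G$-invariance, unfold $\langle \mathrm{HC}(F),\mu\rangle=\langle F,\psi(\mu)\rangle$ by Weyl integration on $\mathfrak{g}$ and on $\mathfrak{m}$, and match the sum over $M$-conjugacy classes of Cartans of $\mathfrak{m}$ against the $M$-orbits in $\mathcal{O}^G_X\cap\mathfrak{m}$ (which is exactly the bookkeeping the paper itself performs later in the proof of Proposition 6.1) --- is the standard and correct architecture, and is presumably what the cited arguments amount to.

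The soft spot is your ``central input'' $\phi^M_{\psi(\mu)}(Y)=|\pi_{G/M}(Y)|\,\phi^G_\mu(Y)$. Once Weyl integration is applied, this identity is essentially the dual form of the lemma itself, so asserting it as ``Harish-Chandra's classical characterization of $\psi$'' comes close to assuming the conclusion. The delicate point is that the paper's normalization of the descent is \emph{not} fixed on the Lie-algebra side: it is fixed on the dual side by $\phi^*(\mathcal{O}^M_\lambda)=\mathcal{O}^G_\lambda$ and then transported to $\psi^*=\mathrm{HC}$ through the Fourier transform via $\mathrm{HC}(\widehat{D})=\widehat{\phi^*(D)}$. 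Deriving your orbital-integral identity, with exactly the first power of $|\pi_{G/M}|$ and coefficient one, from that normalization is precisely the content of Rossmann's comparison in \cite{R2}; your write-up defers it with the phrase ``dictated by the normalization'' rather than carrying it out. One clean way to close the loop within the paper's framework is to note that both sides of the lemma are determined by their values on the family $F=\widehat{\mathcal{O}^M_\lambda}$ (whose restrictions to Cartans are known explicitly from Theorem 2.4), push the identity $\mathrm{HC}(\widehat{\mathcal{O}^M_\lambda})=\widehat{\mathcal{O}^G_\lambda}$ through your Weyl-integration computation, and read off the Jacobian. A related caution: the density $|\pi_G|^2$ you propose to ``peel off'' is the Weyl-integration density relative to a fixed invariant measure on $G/H$; relative to the \emph{canonical} orbital measures used throughout the paper the density is $|\pi_G|$ to the first power (compare the computation in the proof of Lemma 3.2, where the canonical measure contributes one factor of $|\pi|$). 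Since the entire point of the lemma is the exact power of the Jacobian, this is the step where an error of a factor of $|\pi_{G/M}|$ would creep in unnoticed.
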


\noindent Combining Theorem 2.4 and Lemma 2.5, we get the following corollary.

\begin{corollary} [Rossmann] Let $G$ be a reductive Lie group with Cartan subgroup $H$, and let $q(G,H)$ be half the number of non-compact imaginary roots of $G$ with respect to $H$. Let $\lambda\in \mathfrak{h}^*=\text{Lie}(H)^*$ be a regular element, and let $C_1\subset \mathfrak{h}'$ be a connected component. Choose positive roots $\Delta^+\subset \Delta$ satisfying\\
\noindent (i) If $\alpha^{\vee}\in (\Delta^+)^{\vee}_{\text{imag.}}$, then $\langle i\lambda, \alpha^{\vee}\rangle>0$.\\
\noindent (ii) If $\alpha\in \Delta^+_{\text{real}}$ and $X\in C_1$, then $\alpha(X)>0$.\\
\noindent (iii) If $\alpha$ a complex root, then $\alpha\in \Delta^+$ iff $\overline{\alpha}\in \Delta^+$.

\noindent Then $$\widehat{\mathcal{O}_{\lambda}^G}|_{C_1}=(-1)^{q(G,H)}\frac{\sum_{w\in W(G,H)}\epsilon_I(w)e^{iw\lambda}}{\pi}$$
where $W(G,H)=N_G(H)/H$, $\pi$ is the product of the positive roots, and $\epsilon_I$ is defined by $$w\cdot \pi_I=\epsilon_I(w)\pi_I,\ \pi_I=\prod_{\alpha\in \Delta^+_{\text{imag}}}\alpha.$$
\noindent Moreover, $\widehat{\mathcal{O}_{\lambda}^G}$ is zero on Cartan subalgebras $\mathfrak{h}$ which are not conjugate to a Cartan subalgebra of $Z_{\mathfrak{g}}(\lambda)$.
\end{corollary}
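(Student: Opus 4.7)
The plan is to apply the Harish-Chandra descent formula (Lemma~2.5) starting from $M := Z_G(\mathfrak{a})$, where $\mathfrak{h} = \mathfrak{t}\oplus\mathfrak{a}$ is the decomposition into compact and split parts. Since $\mathfrak{a}\subset Z(\mathfrak{m})$, the Cartan $H\subset M$ satisfies $H/Z(M)$ compact, so $\mathfrak{h}$ is a fundamental Cartan of the equal-rank group $M$. The roots of $M$ with respect to $\mathfrak{h}$ are exactly the imaginary roots of $G$, so $\pi_M = \pi_I$ and $q(M) = q(G,H)$. By Rossmann's Theorem~2.4 applied to $M$,
$$\widehat{\mathcal{O}_\lambda^M}\big|_{\mathfrak{h}'} = (-1)^{q(G,H)}\frac{\sum_{v\in W(M,H)}\epsilon(v)\,e^{iv\lambda}}{\pi_I},$$
and the normalization $\phi^*(\mathcal{O}_\lambda^M) = \mathcal{O}_\lambda^G$ gives $\widehat{\mathcal{O}_\lambda^G} = \text{HC}(\widehat{\mathcal{O}_\lambda^M})$.

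For $X\in C_1$ I would apply Lemma~2.5. Because $\mathfrak{g}$ and $\mathfrak{m}$ have the same rank, every $Y\in \mathcal{O}_X^G\cap\mathfrak{m}$ is $\mathfrak{g}$-regular semisimple with $Z_\mathfrak{g}(Y)\subset\mathfrak{m}$; the $M$-orbits with representatives in $\mathfrak{h}$ are parameterized by the right cosets $\bar w\in W(M,H)\backslash W(G,H)$ via $\bar w\mapsto wX$, and (as in the vanishing assertion below) the remaining $M$-orbits contribute zero. Substituting Rossmann's formula gives
$$\widehat{\mathcal{O}_\lambda^G}(X) = (-1)^{q(G,H)}\sum_{\bar w}\frac{\sum_{v\in W(M,H)}\epsilon(v)\,e^{iv\lambda(wX)}}{\pi_I(wX)\,|\pi_{G/M}(wX)|}.$$

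The heart of the argument is a sign and coset calculation. Condition~(iii) forces complex roots to come in conjugate pairs, whence $\pi_C(X) = \prod|\alpha(X)|^2 > 0$ on regular $X\in\mathfrak{h}$, and $W(G,H)$ acts trivially on $\pi_C$ (since sign flips come in conjugate pairs). Condition~(ii) gives $\pi_R(X) > 0$ on $C_1$, so $|\pi_R(wX)| = \pi_R(X)$. Together with $\pi_I(wX) = \epsilon_I(w)\pi_I(X)$ and $\pi = \pi_I\pi_R\pi_C$, this yields $\pi_I(wX)\,|\pi_{G/M}(wX)| = \epsilon_I(w)\,\pi(X)$. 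Using $(v\lambda)(wX) = (w^{-1}v\lambda)(X)$ and letting $u := w^{-1}v$, I would check that $(\bar w, v)\mapsto u$ is a bijection onto $W(G,H)$: for every $u$ one may choose the representative $w = u^{-1}$ of the coset $W(M,H)u^{-1}$, which makes $v = e$. Since $\epsilon(v) = \epsilon_I(v)$ for $v\in W(M,H)$ (as $W(M,H)$ acts only on imaginary roots), and the summand is well-defined on right cosets, the coefficient of $e^{iu\lambda(X)}$ reduces to $\epsilon_I(u^{-1})\epsilon(e) = \epsilon_I(u)$, giving the claimed formula.

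For the ``moreover'' clause, suppose $\mathfrak{h}_1$ is not $G$-conjugate to any Cartan of $Z_\mathfrak{g}(\lambda)$. Applying Lemma~2.5 to $X_1\in\mathfrak{h}_1'$, each $Y\in\mathcal{O}_{X_1}^G\cap\mathfrak{m}$ lies in a Cartan of $\mathfrak{m}$ that is $G$-conjugate to $\mathfrak{h}_1$, hence not $M$-conjugate to $\mathfrak{h}$. By the classical Harish-Chandra theory of regular semisimple orbital integrals (equivalently, the Fourier-dual Weyl integration formula), $\widehat{\mathcal{O}_\lambda^M}$ vanishes on Cartans of $\mathfrak{m}$ not $M$-conjugate to $\mathfrak{h}$, so the descent sum vanishes term by term. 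The main obstacle I anticipate is the sign bookkeeping in the third paragraph; the identity $\epsilon = \epsilon_I$ on $W(M,H)$ is the linchpin that consolidates the double sum over cosets into the single sum over $W(G,H)$.
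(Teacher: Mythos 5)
Your overall route is exactly the paper's: the paper proves this corollary in one line by ``combining Theorem 2.4 and Lemma 2.5,'' i.e.\ applying Rossmann's formula to $M=Z_G(\mathfrak{a})$ and then Harish--Chandra descent. Your coset and sign bookkeeping in the third paragraph is correct: the bijection $(\bar w,v)\mapsto u=w^{-1}v$, the identities $\pi_I(wX)=\epsilon_I(w)\pi_I(X)$, $|\pi_R(wX)|=\pi_R(X)$ on $C_1$, $\pi_C(wX)=\pi_C(X)>0$ under condition (iii), and $\epsilon=\epsilon_I$ on $W(M,H)$ all check out and assemble into the stated formula.

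The genuine gap is the lemma you lean on twice: that $\widehat{\mathcal{O}_\lambda^M}$ vanishes on Cartan subalgebras of $\mathfrak{m}$ not $M$-conjugate to $\mathfrak{h}$. This is false; you are conflating the support of the orbital measure (which indeed only meets Cartans conjugate to $\mathfrak{h}$) with the support of its Fourier transform. For $M=\mathrm{SL}(2,\mathbb{R})$ and $\lambda$ elliptic regular, $\widehat{\mathcal{O}_\lambda^M}$ is (up to the Weyl denominator) a discrete series character, which is nonzero on the split Cartan. In the main computation the conclusion survives for a different reason: for $X\in\mathfrak{h}'$ there simply are no $M$-orbits in $\mathcal{O}_X^G\cap\mathfrak{m}$ missing $\mathfrak{h}$, because any $Y\in\mathcal{O}_X^G\cap\mathfrak{m}$ lies in a Cartan of $\mathfrak{m}$ that is $G$-conjugate to $\mathfrak{h}$; such a Cartan contains $\mathfrak{z}(\mathfrak{m})\supset\mathfrak{a}$ and has split part of the same dimension as $\mathfrak{a}$, so it is maximally compact in $\mathfrak{m}$ and hence $M_0$-conjugate to $\mathfrak{h}$. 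You should replace your parenthetical justification with this argument. For the ``moreover'' clause, what descent actually gives is vanishing on Cartans $\mathfrak{h}_1$ not $G$-conjugate to a Cartan of $\mathfrak{m}=Z_\mathfrak{g}(\mathfrak{a})$, because there $\mathcal{O}_{X_1}^G\cap\mathfrak{m}=\emptyset$ and the descent sum is empty -- your term-by-term vanishing argument does not work, and indeed the clause read literally with $Z_\mathfrak{g}(\lambda)=\mathfrak{h}$ would fail for elliptic $\lambda$ in $\mathrm{SL}(2,\mathbb{R})$. The statement should be understood (and is only ever used later, in Proposition 6.1, in the form) with $Z_\mathfrak{g}(\mathfrak{a})$ in place of $Z_\mathfrak{g}(\lambda)$.
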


\noindent A version of this result containing a few typos can be found in \cite{R3}. Since $\epsilon_I=\epsilon$ on a fundamental Cartan $\mathfrak{h}$, this verifies Lemma 2.3 on $\mathfrak{h}'$. To finish the proof of Lemma 2.3, we use Harish-Chandra's matching conditions.

\begin{theorem} [Harish-Chandra] Let $\mathfrak{h}\subset \mathfrak{g}$ be a fundamental Cartan subalgebra, and let $\mathfrak{h}_1$ be another Cartan subalgebra. Let $\lambda\in (\mathfrak{h}^*)'$, and identify $\lambda\in \mathfrak{h}_{\mathbb{C}}^*\cong (\mathfrak{h}_1)_{\mathbb{C}}^*$ via an inner automorphism of $\mathfrak{g}_{\mathbb{C}}$. Suppose $\mathfrak{h}_2$ is a third Cartan related to $\mathfrak{h}_1$ by a Cayley transform $c_{\alpha}$ via a noncompact, imaginary root $\alpha$ of $\mathfrak{h}_1$. Let $C_1\subset \mathfrak{h}_1''$ be a connected component containing an open subset of $\text{ker}(\alpha)$, and let $C_2\subset \mathfrak{h}_2''$ be a connected component such that $C_1$ contains a wall of $\overline{C_2}$. Suppose
$$\widehat{\mathcal{O}_{\lambda}}|_{C_1}=\frac{\sum a_w e^{iw\lambda}}{\pi},\ \ \widehat{\mathcal{O}_{\lambda}}|_{C_2}=\frac{\sum b_w e^{iw\lambda}}{\pi}.$$
Then we have $$\epsilon(w)a_w+\epsilon(s_{\alpha}w)a_{s_{\alpha}w}=\epsilon(w)b_w+\epsilon(s_{\alpha}w)b_{s_{\alpha}w}.$$
Here we identify the noncompact, imaginary root $\alpha$ of $\mathfrak{h}_1$ with the corresponding real root of $\mathfrak{h}_2$. We also identify $\lambda\in (\mathfrak{h}_1)_{\mathbb{C}}^*$ with the corresponding element $\lambda\in (\mathfrak{h}_2)_{\mathbb{C}}^*$ via the pullback of the isomorphism $c_{\alpha}:(\mathfrak{h}_1)_{\mathbb{C}}\rightarrow (\mathfrak{h}_2)_{\mathbb{C}}$.
\end{theorem}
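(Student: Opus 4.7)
The plan is to prove the matching identity by localizing near a semiregular point, reducing to the $\mathfrak{sl}_2(\mathbb{R})$ case via Harish-Chandra descent, and then checking it by direct computation.

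First, I would recognize $\widehat{\mathcal{O}_\lambda}$ as a tempered, $\text{Ad}(G)$-invariant eigendistribution on $\mathfrak{g}$: invariance is preserved by the Fourier transform, and for every $\text{Ad}(G)$-invariant polynomial $P$ on $\mathfrak{g}^*$ the identity $(P - P(\lambda))\mathcal{O}_\lambda = 0$ becomes $P(-i\partial)\widehat{\mathcal{O}_\lambda} = P(\lambda)\widehat{\mathcal{O}_\lambda}$. Harish-Chandra's regularity theorem then implies that $\widehat{\mathcal{O}_\lambda}$ is given by a locally integrable function on $\mathfrak{g}$, real-analytic on $\mathfrak{g}'$, with the local form of Lemma 2.2 on each Cartan.

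Next, I would pick a semiregular $X_0 \in \ker(\alpha) \cap C_1 \subset \mathfrak{h}_1$, so that $\pm\alpha$ are the only roots vanishing at $X_0$. Because $\alpha$ is noncompact imaginary on $\mathfrak{h}_1$, the centralizer factors as $Z_\mathfrak{g}(X_0) = \mathfrak{z}_0 \oplus \mathfrak{g}_\alpha$ with $\mathfrak{g}_\alpha \cong \mathfrak{sl}_2(\mathbb{R})$; the two Cartans $\mathfrak{h}_1, \mathfrak{h}_2$ agree on the abelian $\mathfrak{z}_0$ and differ only inside $\mathfrak{g}_\alpha$ (compact versus split, related by $c_\alpha$). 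Harish-Chandra's transverse-slice theorem near $X_0$, combined with the descent formalism of Lemma 2.5, reduces the desired identity for $\widehat{\mathcal{O}_\lambda^G}$ near $X_0$ to the analogous identity for an invariant eigendistribution on $\mathfrak{sl}_2(\mathbb{R})$ near $0$.

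On $\mathfrak{sl}_2(\mathbb{R})$ the Weyl group has order $2$ and the identity becomes a finite check. The coefficients $a_w$ on one Cartan come from Rossmann's formula (Theorem 2.4) applied to the relevant orbit in $\mathfrak{sl}_2^*$, while the $b_w$ on the Cayley-transformed Cartan come from a direct computation of the Fourier transform of the associated one- or two-sheeted hyperboloid orbit. The identity $\epsilon(w)a_w + \epsilon(s_\alpha w)a_{s_\alpha w} = \epsilon(w)b_w + \epsilon(s_\alpha w)b_{s_\alpha w}$ is then verified by inspection. The principal obstacle is bookkeeping of signs across the descent and the complex Cayley transform: the identifications $\lambda \in \mathfrak{h}_\mathbb{C}^* \cong (\mathfrak{h}_1)_\mathbb{C}^* \cong (\mathfrak{h}_2)_\mathbb{C}^*$ are defined only up to an inner automorphism of $\mathfrak{g}_\mathbb{C}$, and the signs $\epsilon(w)$ refer to different positive root systems on each Cartan. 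Once these conventions are threaded consistently with the setup of Lemma 2.2, the elementary $\mathfrak{sl}_2$ check produces exactly the stated matching relation.
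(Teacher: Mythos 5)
The paper does not actually prove this statement: it is quoted as Lemma 26 of Harish-Chandra's first discrete series paper, with the remark that it follows from Lemma 18 of his paper on invariant eigendistributions. So there is no in-paper argument to compare against; what you have written is essentially a reconstruction of Harish-Chandra's original proof, and your architecture is the right one. Recognizing $\widehat{\mathcal{O}_\lambda}$ as a tempered invariant eigendistribution, localizing at a semiregular point $X_0\in\ker(\alpha)\cap \overline{C_1}$ whose centralizer has derived algebra $\mathfrak{sl}(2,\mathbb{R})$ containing both the compact and split Cartans related by $c_\alpha$, and reducing the wall-crossing identity to a rank-one computation is exactly how the matching conditions are established classically.

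Two points deserve tightening. First, the descent you need here is not Lemma 2.5 of the paper, which is the global descent to $M=Z_G(\mathfrak{a})$ for $\mathfrak{a}$ the split part of a fundamental Cartan; at a semiregular point you need Harish-Chandra's local descent to the centralizer $Z_{\mathfrak{g}}(X_0)$, which is an analogous but distinct construction. Second, and more substantively, your final step proposes to verify the $\mathfrak{sl}(2,\mathbb{R})$ identity by inspecting Fourier transforms of elliptic and hyperbolic orbits. The germ at $X_0$ of the descended distribution is an invariant eigendistribution on the rank-one factor, but it is not a priori a linear combination of orbital Fourier transforms, so checking the relation on that particular family does not by itself close the argument. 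What is actually needed is the general analysis of invariant eigendistributions on $\mathfrak{sl}(2,\mathbb{R})$ near the wall: the radial differential equations have a two-dimensional solution space on each side of $\ker(\alpha)$, and the requirement that $F$ be a genuine distribution (locally integrable, with no delta-supported corrections in the equations) forces the continuity of the $s_\alpha$-symmetrized part of $\pi F$ across the wall, which is precisely the stated relation. That is the content of Lemma 18 of the cited Harish-Chandra paper. With that substitution, and with the sign conventions you already flag handled consistently, your outline is a faithful version of the classical proof.
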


This theorem is Lemma 26 of \cite{HC5} where Harish-Chandra remarks that it follows from Lemma 18 of \cite{HC4}. Summing these relations over the entire Weyl group, we get
$$\sum \epsilon(w) a_w=\sum \epsilon(w) b_w.$$
Since any component of any Cartan can be related to a component of a fundamental Cartan via successive Cayley transforms, we deduce 
$$\sum \epsilon(w) a_w=(-1)^{q}|W(G,H)|$$
whenever $\widehat{\mathcal{O}_{\lambda}}|_{C_1}=\frac{\sum a_we^{w\lambda}}{\pi}$ on any component $C_1\subset \mathfrak{h}_1''$ for any Cartan $\mathfrak{h}_1$. This is the statement of Lemma 2.3. As we have already remarked, Theorem 2.1 follows.

\section{Harish-Chandra's Limit Formula for Semisimple Orbits}

In this section, we prove Harish-Chandra's limit formula for an arbitrary semisimple orbit. A group analogue of this result was proved on pages 33-34 of \cite{HC6}.

\begin{theorem} Let $G$ be a reductive Lie group, let $\xi\in \mathfrak{g}^*=\text{Lie}(G)^*$ be a semisimple element, let $L=Z_G(\xi)\subset G$ be the corresponding reductive subgroup, and fix a fundamental Cartan subgroup $H\subset L$. Choose positive roots $\Delta_L^+\subset \Delta_L=\Delta(\mathfrak{l}_{\mathbb{C}},\mathfrak{h}_{\mathbb{C}})$ such that a complex root $\alpha$ is positive iff $\overline{\alpha}$ is positive, and put 
$$C=\{\lambda\in (\mathfrak{h}^*)'|\ \langle i\lambda, \alpha\rangle>0\ \text{for\ all\ imaginary\ roots}\ \alpha\in \Delta^+\}.$$
Then 
$$\lim_{\lambda\rightarrow \xi,\ \lambda\in C} \partial(\pi_L)|_{\lambda} \mathcal{O}_{\lambda}=i^{r(G)}(-1)^{q(L)}|W(L,H)|\mathcal{O}_{\xi}.$$
\noindent Here $\pi_L$ is the product of the positive roots of $L$ and $\mathfrak{h}^*=\text{Lie}(H)^*\subset \mathfrak{g}^*=\text{Lie}(G)^*$ is embedded in the usual way.
\end{theorem}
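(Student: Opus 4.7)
The plan is to deduce Theorem 3.1 from Theorem 2.1 applied inside $L = Z_G(\xi)$, via a local fibration of $\mathcal{O}_\lambda^G$ over $\mathcal{O}_\xi^G$. Write $\lambda = \xi + \mu$ with $\mu \in \mathfrak{h}^*$ small and regular in $\mathfrak{l}^*$. Choose an $L$-stable decomposition $\mathfrak{g} = \mathfrak{l} \oplus \mathfrak{m}$; the dual decomposition $\mathfrak{g}^* = \mathfrak{l}^* \oplus \mathfrak{m}^*$ embeds $\mathfrak{l}^*$ in $\mathfrak{g}^*$, and we may take $\xi,\mu \in \mathfrak{l}^*$. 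Since $L$ fixes $\xi$ we have $\mathcal{O}_\lambda^G = G(\xi + \mathcal{O}_\mu^L)$. The associated-bundle map
$$G \times_L \mathcal{O}_\mu^L \longrightarrow \mathcal{O}_\lambda^G, \qquad [g,\eta] \longmapsto g(\xi+\eta),$$
is a diffeomorphism for $\mu$ small enough: injectivity follows from the identity $W(G,H)_\xi = W(L,H)$ (an element of $N_G(H)$ that fixes $\xi$ lies in $Z_G(\xi) \cap N_G(H) = N_L(H)$) together with the observation that for $\mu$ sufficiently small, any Weyl element $w$ relating two points of $\mathcal{O}_\lambda^G \cap \mathfrak{h}^*$ must already satisfy $w\xi = \xi$.

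The next step is to factor the Kirillov--Kostant form $\omega_{\xi+\eta}^G(X,Y) = (\xi+\eta)([X,Y])$ at a generic point $\xi+\eta$ of $\mathcal{O}_\lambda^G$, whose tangent space splits as $(\mathfrak{l}/\mathfrak{h}) \oplus \mathfrak{m}$. On $\mathfrak{l}/\mathfrak{h}$ one reads off $\eta([X,Y]) = \omega_\eta^L(X,Y)$, because $\xi$ annihilates $[\mathfrak{l},\mathfrak{g}]$; on $\mathfrak{m}$ one gets $\omega_\xi^G(X,Y)$ plus a correction of order $\eta$; and the cross terms between $\mathfrak{l}/\mathfrak{h}$ and $\mathfrak{m}$ vanish since $[\mathfrak{l},\mathfrak{m}] \subset \mathfrak{m}$ is annihilated by both $\xi$ and $\eta$. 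Taking top exterior powers and the usual $(2\pi)^{m} m!$ normalizations yields the measure factorization
$$\int_{\mathcal{O}_\lambda^G} f \, d\mathrm{can}^G \;=\; \int_{G/L} \left( \int_{\mathcal{O}_\mu^L} f(g(\xi+\eta))\, J(\eta)\, d\mathrm{can}^L(\eta) \right) d\mathrm{can}_\xi^G(gL),$$
where $J(\eta) = 1 + O(\eta)$ is a smooth function independent of $\mu$.

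I would then apply $\partial(\pi_L)|_\mu$ to both sides and send $\mu \to 0$. For each fixed $g$, the function $\eta \mapsto f(g(\xi+\eta)) J(\eta)$ is smooth and compactly supported on $\mathfrak{l}^*$, so Theorem 2.1 applied in $L$ gives
$$\lim_{\mu \to 0,\ \mu \in C} \partial(\pi_L)|_\mu \int_{\mathcal{O}_\mu^L} f(g(\xi+\eta))\, J(\eta)\, d\mathrm{can}^L(\eta) \;=\; i^{r(L)}(-1)^{q(L)}|W(L,H)|\, f(g\xi),$$
where I use $J(0) = 1$ and the fact that $C$ projects to the chamber required by Theorem 2.1 for $L$ (since $\alpha(\xi) = 0$ for $\alpha \in \Delta_L$, the condition $\langle i\lambda,\alpha\rangle > 0$ reduces to $\langle i\mu,\alpha\rangle > 0$). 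Pulling the limit through the outer $G/L$ integral by dominated convergence (the integrand has compact effective support in $G/L$ because $f$ does) yields $i^{r(L)}(-1)^{q(L)}|W(L,H)| \int_{\mathcal{O}_\xi^G} f\, d\mathrm{can}_\xi^G$, which is the claimed limit (with the exponent on $i$ being $r(L)$, in agreement with Theorem 1.2). The main obstacle is the symplectic factorization and the uniformity of the $O(\eta)$ correction; once those are in place, the result reduces cleanly to Theorem 2.1 for $L$.
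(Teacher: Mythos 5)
Your overall strategy coincides with the paper's: fiber $\mathcal{O}_\lambda^G$ over $\mathcal{O}_\xi^G\cong G/L$ with fiber $\mathcal{O}_\mu^L\cong L/H$, and reduce to the $\xi=0$ limit formula (Theorem 2.1) applied inside $L$. The interesting difference is where the normalization constant lives. The paper writes $\langle\mathcal{O}_\lambda^G,f\rangle=c_\lambda\int_{G/L}\langle\mathcal{O}_\lambda^L,g\cdot f\rangle\,dg$ with $c_\lambda$ a constant on the orbit but a nontrivial function of the parameter $\lambda$ (Lemma 3.2 computes it as $\pi_{G/L}^{\vee}(\lambda)/\pi_{G/L}^{\vee}(\xi)$); it must then justify pushing $\partial(\pi_L)|_\lambda$ past $c_\lambda$, which is the content of Lemma 3.3, proved by a Weyl-group skew-symmetry argument showing $[\partial(\pi_L),\pi_{G/L}^{\vee}](\xi)=0$. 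You instead keep the Jacobian as a function $J(\eta)$ of the fiber variable inside the inner integral; since $J$ restricted to $\mathcal{O}_\mu^L$ is the constant $c_{\xi+\mu}$, your factorization is the same identity, but packaged so that $\partial(\pi_L)|_\mu$ never acts on the Jacobian: the limit is just Theorem 2.1 for $L$ evaluated on the fixed test function $\eta\mapsto f(g(\xi+\eta))J(\eta)$, yielding $f(g\xi)J(0)=f(g\xi)$. This genuinely obviates the paper's Lemma 3.3, at the cost of having to verify carefully that $J$ is a smooth function on $\mathfrak{l}^*$ independent of $\mu$ — which is true, and which your symplectic block-diagonalization (cross terms killed because $\xi,\eta$ annihilate $\mathfrak{m}$, and $\xi$ annihilates $[\mathfrak{l},\mathfrak{l}]$) correctly establishes; this computation is essentially the paper's proof of Lemma 3.2. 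Two minor points: the interchange of $\lim_{\mu\to 0}$ with $\int_{G/L}$ deserves the remark that $\mathcal{O}_\xi$ is closed (so the effective support in $G/L$ is compact, as you say) together with a uniform bound on the inner limit, but the paper performs the same interchange with no more justification; and your constant $i^{r(L)}$ is the correct one — it agrees with Theorem 1.2 and with what the paper's own proof produces, so the $i^{r(G)}$ in the statement of Theorem 3.1 is evidently a typographical slip ($\partial(\pi_L)$ has degree $r(L)$, and each derivative of $e^{i\langle\lambda,X\rangle}$ contributes one factor of $i$).
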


We prove the theorem by reducing to the case $\xi=0$, which was proved in the last section. Let $d_{G/H}$ be a Haar measure on $G/H$, let $d_{G/L}$ be a Haar measure on $G/L$, and let $d_{L/H}$ be a Haar measure on $L/H$. Then it is a well-known fact (see for instance page 95 of \cite{H}) that there exists a constant $c>0$ such that 
$$\int_{G/H}f(g\cdot X)d_{G/H}g=c\int_{G/L} \left(\int_{L/H} f(gl\cdot X) d_{L/H}l\right)d_{G/L}g.$$
Choosing a Haar measure on $G/H$ (resp. $G/L$, $L/H$) is equivalent to choosing a top-dimensional, alternating tensor $\eta_{G/H}$, well-defined up to sign, on $(\mathfrak{g}/\mathfrak{h})^*$ (resp. $\eta_{G/L}$, $\eta_{L/H}$ on $(\mathfrak{g}/\mathfrak{l})^*$, $(\mathfrak{l}/\mathfrak{h})^*$). The exact sequence
$$0\rightarrow (\mathfrak{g}/\mathfrak{l})^*\rightarrow (\mathfrak{g}/\mathfrak{h})^*\rightarrow (\mathfrak{l}/\mathfrak{h})^*\rightarrow 0$$
gives rise to maps on alternating tensors. Abusing notation, we also write $\eta_{G/L}$ for the image of $\eta_{G/L}$ under the above map, and we also write $\eta_{L/H}$ for a preimage of $\eta_{L/H}$ under the above map. Then
$$\eta_{G/H}=\pm c(\eta_{G/L}\wedge \eta_{L/H}).$$
This can be proved by relating the multiplication $gl$ on the group to addition on the Lie algebra and then applying Fubini's theorem.\\
\indent To apply these remarks to our proof of the theorem, fix a Haar measure on the homogeneous space $G/L$ by identifying $G/L\cong \mathcal{O}_{\xi}$ and using the canonical measure, fix a Haar measure on $G/H$ by identifying $G/H\cong \mathcal{O}_{\lambda}$ for a fixed $\lambda\in C$, and fix a Haar measure on $L/H$ by identifying $L/H\cong \mathcal{O}_{\lambda}^L$. Then we get
$$\int_{G/H} f(g\cdot \lambda)dg=c_{\lambda}\int_{G/L}\left(\int_{L/H} f(gl\cdot \lambda) dl\right)dg.$$

\begin{lemma} Let $\Delta_G$ (resp. $\Delta_L$) denote the roots of $\mathfrak{g}$ (resp. $\mathfrak{l}$) with respect to $\mathfrak{h}$. Let $\Delta_G^+\subset \Delta_G$ be a choice of positive roots, and let $\Delta_L^+=\Delta_G^+\cap \Delta_L$. Then 
$$c_{\lambda}=\frac{\prod_{\alpha\in \Delta_G^+\setminus \Delta_L^+}\langle \lambda, \alpha^{\vee}\rangle}{\prod_{\alpha\in \Delta_G^+\setminus \Delta_L^+}\langle \xi, \alpha^{\vee}\rangle}$$
for $\lambda\in C$. In particular, $\displaystyle \lim_{\lambda\in C,\ \lambda\rightarrow \xi} c_{\lambda}=1$.
\end{lemma}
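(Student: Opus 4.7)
The plan is to interpret $c_\lambda$ as a ratio of canonical top forms on the three tangent spaces, and then to compute each form directly from the Kostant--Kirillov construction via the root space decomposition. Let $2m=\dim\mathcal{O}_\lambda^G$, $2p=\dim\mathcal{O}_\xi^G$, and $2k=\dim\mathcal{O}_\lambda^L$, so that $m=p+k$. Write $\omega_\lambda$, $\omega_\xi$, $\omega_\lambda^L$ for the Kostant--Kirillov forms on the three orbits at their base points; pulling back the canonical measures to $\mathfrak{g}/\mathfrak{h}$, $\mathfrak{g}/\mathfrak{l}$, $\mathfrak{l}/\mathfrak{h}$, the factor $(2\pi)^m=(2\pi)^p(2\pi)^k$ cancels, and the constant $c_\lambda$ equals (up to sign) the ratio
$$\frac{\omega_\lambda^m/m!}{\bigl(\omega_\xi^p/p!\bigr)\wedge\bigl((\omega_\lambda^L)^k/k!\bigr)}$$
of top forms on $\mathfrak{g}/\mathfrak{h}=(\mathfrak{g}/\mathfrak{l})\oplus(\mathfrak{l}/\mathfrak{h})$.

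Next, I would establish that $\omega_\lambda$ is block diagonal with respect to this splitting. Complexifying, one has
$$\mathfrak{g}_{\mathbb{C}}/\mathfrak{h}_{\mathbb{C}}\;=\;\bigoplus_{\alpha\in\Delta_L}\mathfrak{g}_\alpha\;\oplus\;\bigoplus_{\alpha\in\Delta_G\setminus\Delta_L}\mathfrak{g}_\alpha.$$
For $\alpha\in\Delta_L$ and $\beta\in\Delta_G\setminus\Delta_L$ we have $\alpha+\beta\neq 0$, so $[X_\alpha,X_\beta]\in\mathfrak{g}_{\alpha+\beta}$ lies in a root space and is killed by $\lambda\in\mathfrak{h}^*$ (extended by zero). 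On the $\mathfrak{l}/\mathfrak{h}$ block $\omega_\lambda$ coincides with $\omega_\lambda^L$ by the very definition of the Kostant--Kirillov form, and on the $\mathfrak{g}/\mathfrak{l}$ block it restricts to some symplectic form $\tilde\omega_\lambda$. The standard expansion of the top power of a block-diagonal $2$-form then gives $\omega_\lambda^m/m!=(\tilde\omega_\lambda^p/p!)\wedge((\omega_\lambda^L)^k/k!)$, reducing the ratio above to $\tilde\omega_\lambda^p/p!$ divided by $\omega_\xi^p/p!$ as top forms on $\mathfrak{g}/\mathfrak{l}$.

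Finally, I would compare $\tilde\omega_\lambda$ with $\omega_\xi$ on $\mathfrak{g}/\mathfrak{l}$. Both forms pair $\mathfrak{g}_\alpha$ with $\mathfrak{g}_{-\alpha}$ for $\alpha\in\Delta_G\setminus\Delta_L$ and annihilate every other pair of root spaces; the pairings are $\tilde\omega_\lambda(X_\alpha,X_{-\alpha})=c_\alpha\langle\lambda,\alpha^\vee\rangle$ and $\omega_\xi(X_\alpha,X_{-\alpha})=c_\alpha\langle\xi,\alpha^\vee\rangle$ for identical normalization constants $c_\alpha$ depending only on the chosen root vectors. Taking top exterior powers, the $c_\alpha$'s cancel pair by pair, and I obtain
$$c_\lambda\;=\;\prod_{\alpha\in\Delta_G^+\setminus\Delta_L^+}\frac{\langle\lambda,\alpha^\vee\rangle}{\langle\xi,\alpha^\vee\rangle}.$$
This expression is real because complex roots in the product appear in conjugate pairs, and the limit $c_\lambda\to 1$ as $\lambda\to\xi$ in $C$ follows by continuity, since the denominator is a nonzero constant.

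The only genuine obstacle is the block-diagonality of $\omega_\lambda$, which rests on the trivial root-combinatorial observation that $\alpha\in\Delta_L$ and $\beta\in\Delta_G\setminus\Delta_L$ cannot sum to zero; once this is in hand, the rest is a Pfaffian computation, and issues of sign and positivity are harmless because $c_\lambda$ is defined as the positive constant in an integration identity.
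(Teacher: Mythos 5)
Your proposal is correct and follows essentially the same route as the paper: both express $c_\lambda$ through the identity $\eta_{G/H}=\pm c_\lambda(\eta_{G/L}\wedge\eta_{L/H})$ and evaluate the canonical top forms on root vectors, the key point in each case being that $\omega_\lambda(X_\alpha,X_\beta)=\lambda([X_\alpha,X_\beta])$ vanishes unless $\beta=-\alpha$. The only difference is bookkeeping — you compute the Pfaffians directly via block-diagonality, while the paper squares the top forms and identifies the result as a determinant $\det(\omega_\lambda(X_\alpha,X_\beta))=\prod_{\alpha\in\Delta_G^+}\lambda(H_\alpha)^2$ before taking the positive square root.
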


\begin{proof} Recall that the forms $\eta_{G/H}$, $\eta_{G/L}$, and $\eta_{L/H}$ yield top-dimensional alternating tensors on $(\mathfrak{g}/\mathfrak{h})^*$, $(\mathfrak{g}/\mathfrak{l})^*$, and $(\mathfrak{l}/\mathfrak{h})^*$, well-defined up to a choice of sign. Extend these tensors complex linearly to $(\mathfrak{g}_{\mathbb{C}}/\mathfrak{h}_{\mathbb{C}})^*$, $(\mathfrak{g}_{\mathbb{C}}/\mathfrak{l}_{\mathbb{C}})^*$, $(\mathfrak{l}_{\mathbb{C}}/\mathfrak{h}_{\mathbb{C}})^*$ and denote them by $\eta_{\lambda}^G$, $\eta_{\xi}^G$, and $\eta_{\lambda}^L$. Note that we still have the identity
$\eta_{\lambda}^G=\pm c_{\lambda}(\eta_{\xi}^G\wedge \eta_{\lambda}^L)$.
Consider the root space decomposition 
$$\mathfrak{g}_{\mathbb{C}}=\mathfrak{h}_{\mathbb{C}}\bigoplus \left(\sum_{\alpha\in \Delta_G} (\mathfrak{g}_{\mathbb{C}})_{\alpha}\right).$$
For each $\alpha\in \Delta_G^+$, choose elements $X_{\alpha}\in (\mathfrak{g}_{\mathbb{C}})_{\alpha}$, $X_{-\alpha}\in (\mathfrak{g}_{\mathbb{C}})_{-\alpha}$, and $H_{\alpha}\in \mathfrak{h}_{\mathbb{C}}$ such that $\{X_{\alpha},H_{\alpha},X_{-\alpha}\}$ is an $\mathfrak{sl}_2$-triple. Then
$$\left[(2\pi)^m\eta_{\lambda}^G(\{X_{\alpha}\}_{\alpha\in\Delta_G})\right]^2=\left(\frac{\omega_{\lambda}^m}{m!}\left(\{X_{\alpha}\}_{\alpha\in \Delta_G}\right)\right)^2=\det\left(\omega_{\lambda}(X_{\alpha},X_{\beta})\right).$$
Here $m=\frac{1}{2}(\dim \mathfrak{g}-\dim \mathfrak{h})$ and $\omega_{\lambda}$ is the Kostant-Kirillov symplectic form on $\mathcal{O}_{\lambda}^G$. Note that we need not order the tangent vectors $\{X_{\alpha}\}$ before applying the square of the top dimensional alternating tensor $\eta_{\lambda}^G$ to them since the value of $\eta_{\lambda}^G(\{X_{\alpha}\}_{\alpha\in \Delta_G})^2$ is independent of this ordering. The second equality follows from explicitly expanding out $\omega_{\lambda}^m(\{X_{\alpha}\})$ into a sum with signs, squaring it, and identifying the result as the corresponding $2m$ by $2m$ determinant. Finally, we have
$$\det(\omega_{\lambda}(X_{\alpha},X_{\beta})=\prod_{\alpha\in \Delta^+_G} \lambda(H_{\alpha})^2.$$
This follows from the fact that $\omega_{\lambda}(X_{\alpha},X_{\beta})=\lambda[X_{\alpha},X_{\beta}]\neq 0$ only if $\beta=-\alpha$ in which case we obtain $\lambda(H_{\alpha})$. If $k=\frac{1}{2}(\dim \mathfrak{g}-\dim \mathfrak{l})$ and $l=\frac{1}{2}(\dim \mathfrak{l}-\dim \mathfrak{h})$, then we similarly have
$$\left[(2\pi)^k\eta_{\xi}^G(\{X_{\alpha}\}_{\alpha\in \Delta_G\setminus \Delta_L})\right]^2=\prod_{\alpha\in \Delta^+_G\setminus \Delta^+_L}\xi(H_{\alpha})^2$$
and
$$\left[(2\pi)^l\eta_{\lambda}^G(\{X_{\alpha}\}_{\alpha\in \Delta_L})\right]^2=\prod_{\alpha\in \Delta^+_L}\lambda(H_{\alpha})^2.$$
Combining the above formulas and the identity $\eta_{\lambda}^G=\pm c_{\lambda}(\eta_{\xi}^G\wedge \eta_{\lambda}^L)$ yields
$$c_{\lambda}^2\prod_{\alpha\in \Delta^+_G\setminus \Delta^+_L} \xi(H_{\alpha})^2\prod_{\alpha\in \Delta_L^+}\lambda(H_{\alpha})^2=\prod_{\alpha\in \Delta^+_G} \lambda(H_{\alpha})^2.$$
Solving for $c_{\lambda}^2$, taking the positive square root, and observing $H_{\alpha}=\alpha^{\vee}$ is the coroot, the lemma follows.
\end{proof}

In the last lemma, we observed that $c_{\lambda}$ is a constant multiple of the polynomial $\pi_{G/L}^{\vee}=\prod_{\alpha\in \Delta_G^+\setminus \Delta_L^+} \alpha^{\vee}$ in a neighborhood of $\xi$. Let $D(\mathfrak{h}^*)$ denote the Weyl algebra of polynomial coefficient differential operators on $\mathfrak{h}^*$. Given $D\in D(\mathfrak{h}^*)$, we may evaluate $D$ at $\lambda\in \mathfrak{h}^*$ and get a distribution $D(\lambda)$ (ie. $D(\lambda)(f):=(Df)(\lambda)$). We write $D(\lambda)=0$ if $D(\lambda)$ is the zero distribution. 

\begin{lemma} The elements $\partial(\pi_L),\pi_{G/L}^{\vee}\in D(\mathfrak{h}^*)$ commute at the point $\xi$. More precisely,
$$[\partial(\pi_L),\pi_{G/L}^{\vee}](\xi)=0.$$
\end{lemma}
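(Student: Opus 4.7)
The plan is to expand the commutator in the Weyl algebra $D(\mathfrak{h}^*)$, interpret its evaluation at $\xi$ as $\partial(R)\delta_\xi$ for some $R\in S(\mathfrak{h}^*_{\mathbb{C}})$, and show $R=0$ by a degree/antisymmetry argument.

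Writing $\pi_L=\prod_{i=1}^{k}\alpha_i$ with the $\alpha_i$ enumerating $\Delta_L^+$ (so $k=r(L)$), iterated Leibniz yields
$$[\partial(\pi_L),\pi_{G/L}^{\vee}]=\sum_{\emptyset\neq S\subseteq\{1,\ldots,k\}}\Bigl(\prod_{i\in S}\partial(\alpha_i)\Bigr)(\pi_{G/L}^{\vee})\cdot\partial\Bigl(\prod_{i\notin S}\alpha_i\Bigr),$$
where the first factor of each summand is a polynomial on $\mathfrak{h}^*$ and the second a constant-coefficient operator. Evaluating at $\xi$ then produces a distribution of the form $\partial(R)\delta_\xi$, where
$$R=\sum_{\emptyset\neq S}\Bigl[\Bigl(\prod_{i\in S}\partial(\alpha_i)\Bigr)(\pi_{G/L}^{\vee})\Bigr](\xi)\cdot\prod_{i\notin S}\alpha_i\in S(\mathfrak{h}^*_{\mathbb{C}}).$$
Every summand has $|S^c|\leq k-1$ root factors, so $\deg R\leq r(L)-1$.

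Next I would show that $R$ is anti-invariant under the complex Weyl group $W:=W(\mathfrak{l}_{\mathbb{C}},\mathfrak{h}_{\mathbb{C}})$. The operator $\pi_L$ is $W$-anti-invariant, while $\pi_{G/L}^{\vee}$ is $W$-invariant because $W$ permutes $\Delta_G^+\setminus\Delta_L^+$ once $\Delta_G^+$ is chosen compatibly with $\Delta_L^+$ (which we may arrange by choosing a parabolic containing $L$). Consequently the commutator is $W$-anti-invariant in $D(\mathfrak{h}^*_{\mathbb{C}})$. Since each reflection $s_\alpha$ for $\alpha\in\Delta_L$ fixes $\xi$ (as $\langle\xi,\alpha^\vee\rangle=0$), evaluation at $\xi$ is $W$-equivariant as a map from $D(\mathfrak{h}^*_{\mathbb{C}})$ to distributions supported at $\xi$, and therefore $w\cdot R=\epsilon(w)R$ in $S(\mathfrak{h}^*_{\mathbb{C}})$ for every $w\in W$.

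To finish, I would invoke the classical fact that every $W$-anti-invariant element of $S(\mathfrak{h}^*_{\mathbb{C}})$ is divisible by $\pi_L$: for each $\alpha\in\Delta_L^+$ the reflection $s_\alpha$ pointwise fixes the hyperplane $\ker\alpha\subset\mathfrak{h}_{\mathbb{C}}$, forcing $R|_{\ker\alpha}=0$ and hence $\alpha\mid R$ in the UFD $S(\mathfrak{h}^*_{\mathbb{C}})$; the positive roots of $L$ being pairwise non-proportional linear forms, their product $\pi_L$ divides $R$. Combined with $\deg R<r(L)=\deg\pi_L$, this forces $R=0$, which is the statement of the lemma. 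The only real subtlety is keeping straight the identifications (roots versus coroots, $S(\mathfrak{h}^*)$ as operators versus as polynomials, and the $W$-equivariance of evaluation at the fixed point $\xi$); once these are set up, the core is a one-line Chevalley-style divisibility paired with a trivial degree count.
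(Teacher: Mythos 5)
Your proof is correct and follows essentially the same route as the paper's: a Leibniz expansion of $\partial(\pi_L)\circ\pi_{G/L}^{\vee}$ over subsets of $\Delta_L^+$, followed by the observation that the leftover terms assemble into a $W_L$-skew polynomial of degree less than $\deg\pi_L$, which must vanish by Chevalley divisibility. The only difference is packaging — you deduce skewness of $R$ abstractly from equivariance of evaluation at the $W_L$-fixed point $\xi$, whereas the paper symmetrizes the coefficients explicitly over $W_L$ — but the underlying argument is the same.
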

\begin{proof} Suppose $S\subset \Delta^+_L$ is a subset, define $\pi_S=\prod_{\alpha\in S}\alpha$, and let $w\in \text{Aut}\mathfrak{h}$ be a linear automorphism. Then for purely formal reasons, 
$$\langle \partial(w\pi_S),\ w\pi_{G/L}^{\vee}\rangle(w\xi)=\langle \partial(\pi_{S}),\ \pi_{G/L}^{\vee}\rangle(\xi).$$
(If $D\in S(\mathfrak{h}^*)$ is a differential operator on $\mathfrak{h}^*$, $p\in S(\mathfrak{h})$ is a polynomial on $\mathfrak{h}^*$, and $\zeta\in \mathfrak{h}^*$ is a point, then $\langle D,p\rangle(\zeta):=(Dp)(\zeta)$ denotes differentiating the polynomials $p$ by $D$ and evaluating at $\zeta$). Now suppose $w\in W_L$ where $W_L$ is the Weyl group of root system $\Delta_L$. Then $w\xi=\xi$ and $w\pi_{G/L}^{\vee}=\pi_{G/L}^{\vee}$. Hence,
$$\langle \partial(w\pi_S),\ \pi_{G/L}^{\vee}\rangle(\xi)=\langle \partial(\pi_{S}),\ \pi_{G/L}^{\vee}\rangle(\xi)\ \ \ \ \ (*)$$
for all $S\subset \Delta_L^+$ and all $w\in W_L$. Now define
$$wS:=\{\alpha\in \Delta_L^+|\ \alpha=\pm w\beta\ \text{and}\ \beta\in S\}.$$
This defines an action of $W_L$ on the set of subsets of $\Delta_L^+$. Let $W_S$ be the stabilizer of $S$ in $W_L$. Then
$$\partial(\pi_L)\pi_{G/L}^{\vee}=\sum_{\substack{W-\text{orbits\ of}\\ \text{subsets}\ S\subset \Delta_L^+}} \frac{|W_S|}{|W_L|}\sum_{w\in W_L} \epsilon_L(w)\langle \partial(w\pi_S), \pi_{G/L}^{\vee}\rangle \partial(w\pi_{S^{c}}).$$
Here $\epsilon_L$ is the sign representation of $W_L$ and $S^{c}$ is the compliment of $S$ in $\Delta_L^+$. 
Moreover, the notation $\langle \partial(w\pi_S),\pi^{\vee}_{G/L} \rangle$ simply means that we differentiate the polynomial $\pi_{G/L}^{\vee}$ by $\partial(w\pi_S)$. Evaluating at $\xi$ and applying (*), our sum becomes
$$\sum_{\substack{W-\text{orbits\ of}\\ \text{subsets}\ S\subset \Delta_L^+}} \frac{|W_S|}{|W_L|}\langle \partial(\pi_S), \pi_{G/L}^{\vee}\rangle(\xi)\ \ \partial\left(\sum_{w\in W_L} \epsilon_L(w)w\pi_{S^{c}}\right)\Big|_{\xi}.$$
Note that the polynomial $\sum \epsilon_L(w) w\pi_{S^{c}}$ is skew with respect to $W_L$. Thus, $\pi_L$ must divide this polynomial. However, if $S\neq \emptyset$, then the degree of $\sum \epsilon_L(w) w\pi_{S^{c}}$ is less than the degree of $\pi_L$. Thus, our polynomial must be the zero polynomial if $S\neq \emptyset$. If $S=\emptyset$, then $\sum \epsilon_L(w) w\pi_{S^{c}}=|W_L|\pi_L$. Plugging this back into the above expression, we end up with $\pi_{G/L}^{\vee}(\xi) \partial(\pi_L)|_{\xi}$ as desired.
\end{proof}

\indent Now, we prove Theorem 3.1. If $f\in C_c^{\infty}(\mathfrak{g})$, then applying Lemmas 3.2 and 3.3 yields
$$\lim_{\lambda\in C,\ \lambda\rightarrow \xi} \partial(\pi_L)|_{\lambda}\langle \mathcal{O}^G_{\lambda}, f\rangle=\lim_{\lambda\in C,\ \lambda\rightarrow \xi} \partial(\pi_L)|_{\lambda} c_{\lambda}\int_{G/L} \langle \mathcal{O}^L_{\lambda}, g\cdot f\rangle dg$$
$$=\lim_{\lambda\in C,\ \lambda\rightarrow \xi} \partial(\pi_L)|_{\lambda} \int_{G/L} \langle \mathcal{O}^L_{\lambda}, g\cdot f\rangle dg
=\int_{G/L} \lim_{\lambda\in C,\ \lambda\rightarrow \xi} \partial(\pi_L)|_{\lambda}\langle \mathcal{O}^L_{\lambda}, g\cdot f\rangle dg.$$
Applying theorem 2.1, we have
$$\lim_{\lambda\in C,\ \lambda\rightarrow \xi}  \partial(\pi_L)|_{\lambda}\langle \mathcal{O}^L_{\lambda}, g\cdot f\rangle=i^{r(L)}(-1)^{q(L)}|W(L,H)|f(g\cdot \xi).$$
Since we normalized the measure on $G/L\cong \mathcal{O}^G_{\xi}$ to be the canonical one, when we integrate both sides over $G/L$, we get
$$\lim_{\lambda\in C,\ \lambda\rightarrow \xi} \partial(\pi_L)|_{\lambda}\langle \mathcal{O}^G_{\lambda}, f\rangle=i^{r(L)}(-1)^{q(L)}|W(L,H)|\langle \mathcal{O}_{\xi}^G, f\rangle$$
as desired.

\section{Applications of a Lemma of Rao and a Limit Formula of Barbasch}
\indent Identify $\mathfrak{g}\cong \mathfrak{g}^*$ via a $G$-equivariant isomorphism. Let $\mathcal{O}_X$ be a nilpotent orbit in $\mathfrak{g}^*\cong \mathfrak{g}$, and let $\{X,H,Y\}$ be an $\mathfrak{sl}_2$-triple with nilpositive element $X$. Put $S_X=X+Z_{\mathfrak{g}}(Y)$. 

\begin{proposition} Let $\nu\in \mathfrak{g}^*$ be semisimple. Then 
$$\lim_{t\rightarrow 0^+} \mathcal{O}_{t\nu}=\sum_{\substack{\mathcal{O}_X\ \text{nilpotent}\\ \mathcal{O}_{\nu}\cap S_X\ \text{finite}}}\#(\mathcal{O}_{\nu}\cap S_X)\mathcal{O}_X.$$
\end{proposition}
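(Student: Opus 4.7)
The plan is to take the existence of the limit as a sum $\sum n_G(\mathcal{O},\nu)\mathcal{O}$ supported on the nilpotent cone as granted (this is Proposition 1.3 and its background) and identify each coefficient $n_G(\mathcal{O}_X,\nu)$ by localizing near $\mathcal{O}_X$ through the Slodowy slice $S_X$.

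First, fix an $\mathfrak{sl}_2$-triple $\{X,H,Y\}$ with nilpositive element $X$, so $S_X=X+Z_\mathfrak{g}(Y)$. By the transversality of $S_X$ to $\mathcal{O}_X$ at $X$ (the essential content of Rao's lemma in this setting), the map $G\times S_X\to\mathfrak{g}^*,\ (g,Z)\mapsto\operatorname{Ad}^*(g)Z$, is a submersion at $(e,X)$, and for a sufficiently small open set $S_X^\circ\subset S_X$ about $X$ we obtain a $G$-invariant tubular neighborhood $G\cdot S_X^\circ$ of $\mathcal{O}_X$. A direct local computation of the Kostant-Kirillov form in slice coordinates shows that the canonical measure on any coadjoint orbit $\mathcal{O}_\lambda$ meeting $S_X^\circ$ in a finite set decomposes as
$$\mathcal{O}_\lambda\big|_{G\cdot S_X^\circ}\;=\;\sum_{p\in\mathcal{O}_\lambda\cap S_X^\circ} J_\lambda(p)\,\bigl(\text{push-forward along the orbit through }p\bigr),$$
where the Jacobian $J_\lambda(p)$ converges, as $p\to X$, to the constant producing the canonical measure on $\mathcal{O}_X$.

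Next I would exploit the contracting action on $S_X$ coming from the triple: with $\phi(s)=\exp((\log s)H)$ and $\tau(s)Z=s^{-2}\operatorname{Ad}^*(\phi(s))Z$, a weight analysis of $H$ on $Z_\mathfrak{g}(Y)$ gives $\tau(s)X=X$, $\tau(s)(S_X)\subset S_X$, and $\tau(s)(\mathcal{O}_\nu)=\mathcal{O}_{s^{-2}\nu}$. Setting $s=t^{-1/2}$ yields a bijection $\tau(t^{-1/2}):\mathcal{O}_\nu\cap S_X\xrightarrow{\ \sim\ }\mathcal{O}_{t\nu}\cap S_X$ whose image collapses to $X$ as $t\to 0^+$. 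Combining this with the slice decomposition and the convergence $J_\lambda(p)\to J_{\mathcal{O}_X}(X)$, for $f\in C_c^\infty(\mathfrak{g}^*)$ supported in $G\cdot S_X^\circ$,
$$\lim_{t\to 0^+}\langle\mathcal{O}_{t\nu},f\rangle=\#(\mathcal{O}_\nu\cap S_X)\,\langle\mathcal{O}_X,f\rangle.$$
Partitioning with $G$-invariant cutoffs over the finitely many nilpotent orbits then assembles the full formula. Orbits with $\mathcal{O}_\nu\cap S_X$ infinite contribute nothing, because they have $\dim\mathcal{O}_X<\dim\mathcal{O}_\nu$ while the limit is homogeneous of degree $\tfrac12\dim\mathcal{O}_\nu$ and so only nilpotent orbits of matching dimension can appear.

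The main obstacle is the symplectic-geometric step: verifying that $J_\lambda(p)$, extracted from the Kostant-Kirillov form on $\mathcal{O}_\lambda$ in slice coordinates, converges as $p\to X$ to precisely the normalization of the canonical measure on $\mathcal{O}_X$. This compatibility is the heart of Rao's lemma; it is established by carefully matching the symplectic decompositions of a neighborhood of $X$ transverse to and along the orbit direction, so that the volume forms on $\mathcal{O}_\lambda$ and $\mathcal{O}_X$ coincide under the identification furnished by the tubular neighborhood.
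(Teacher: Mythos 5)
Your proposal is correct and follows essentially the same route as the paper: existence of the limit plus homogeneity to restrict to nilpotent orbits with $\dim\mathcal{O}_X=\dim\mathcal{O}_\nu$ (equivalently $\mathcal{O}_\nu\cap S_X$ finite), Rao's slice lemma to factor the orbital measure into a $G$-direction times a transverse measure on $\mathcal{O}_\lambda\cap S_X$, and the $\mathfrak{sl}_2$-contraction $t\gamma_t$ with $\gamma_t=\exp(-\tfrac12(\log t)H)$ giving the bijection $\mathcal{O}_{t\nu}\cap S_X=t\gamma_t(\mathcal{O}_\nu\cap S_X)$ collapsing to $X$. The only cosmetic difference is that the paper packages the localization as the injective pullback $\phi^*$ along the submersion $G\times S_X\to\mathfrak{g}^*$ rather than a tubular-neighborhood decomposition, and it likewise takes the normalization statement in Rao's lemma (your ``Jacobian convergence'') as given rather than proving it.
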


\indent The fact that the limit of distributions on the left hand side exists was known to Harish-Chandra. It follows from Lemma 22 of \cite{HC3} together with Theorem 3.1 above. Further, the limit on the left hand side is easily seen to be a non-negative, homogeneous distribution supported on the nilcone. It is well-known that every non-negative distribution is a Radon measure. Thus, it follows that the limit must be a linear combination of canonical invariant measures on nilpotent coadjoint orbits.\\
To verify the precise nature of the sum, we recall an unpublished lemma of Rao. Nothing we say is particularly deep; hence, we leave the verifications of these well-known facts to the reader.\\
\indent Observe that the map 
$$\phi: G\times S_X\rightarrow \mathfrak{g}^*$$ 
given by $\phi:(g,\xi)\mapsto g\cdot \xi$ is a submersion. In particular, every orbit $\mathcal{O}_{\nu}\subset G\cdot S_X$ is transverse to $S_X$, and $G\cdot S_X\subset \mathfrak{g}^*$ is open.\\
\indent Fix a Haar measure on $G$. This choice determines a Lebesgue measure on $\mathfrak{g}\cong \mathfrak{g}^*$. The direct sum decomposition $\mathfrak{g}=[\mathfrak{g},X]\oplus Z_{\mathfrak{g}}(Y)$ and the canonical measure on $\mathcal{O}_X$ determine a Lebesgue measure on $S_X$. Further, given $\nu\in \mathfrak{g}^*$, denote by $\mathcal{F}_{\nu}$ the fiber over $\nu$ under the map $\phi$. If $g\cdot \xi=\nu$, then we have an exact sequence
$$0\rightarrow T_{\nu}^*(G\cdot S_X) \rightarrow T_{(g,\xi)}^*(G\times S_X)\rightarrow T_{(g,\xi)}^*\mathcal{F}_{\nu}\rightarrow 0.$$
This exact sequence together with the above remarks and our choice of Haar measure on $G$ determine a smooth measure on $\mathcal{F}_{\nu}$. Moreover, integration against these measures on the fibers of $\phi$ yields a continuous surjective map
$$\phi_*:\ C_c^{\infty}(G\times S_X)\longrightarrow C_c^{\infty}(G\cdot S_X).$$
\noindent Dualizing, we get an injective pullback map on distributions
$$\phi^*:\ D(G\cdot S_X)\rightarrow D(G\times S_X).$$
\noindent Now, we are ready to state Rao's lemma.

\begin{lemma} [Rao] If $\nu\in S_X$, then there exists a smooth measure $m_{\nu,X}$ on $\mathcal{O}_{\nu}\cap S_X$ such that 
$$\phi^*(\mathcal{O}_{\nu})=m_G\otimes m_{\nu,X}.$$
Here $m_G$ denotes the fixed choice of Haar measure on $G$. Although $\phi^*$ depends on this choice of Haar measure, $m_{\nu,X}$ does not.
\end{lemma}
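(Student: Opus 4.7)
The plan is to identify $\phi^{*}(\mathcal{O}_\nu)$ in three conceptual steps: a support identification, a left-$G$-equivariance argument that peels off the Haar factor, and a local transversality calculation to see that what remains is smooth. Because $\mathcal{O}_\nu$ is $G$-invariant, $\phi^{-1}(\mathcal{O}_\nu) = G \times (\mathcal{O}_\nu \cap S_X)$, and because $\phi$ is a submersion, $\mathcal{O}_\nu$ and $S_X$ meet transversally in $\mathfrak{g}^{*}$, making $\mathcal{O}_\nu \cap S_X$ a smooth submanifold of $S_X$ of dimension $\dim \mathcal{O}_\nu + \dim S_X - \dim \mathfrak{g}$. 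Next, $\phi$ intertwines left translation on the first factor of $G \times S_X$ with the coadjoint action on $\mathfrak{g}^{*}$; the Haar measure on $G$ and Lebesgue on $\mathfrak{g}^{*}$ are both invariant under these actions, so the fiber measures $m_{\mathcal{F}_\nu}$ produced by the exact sequence of cotangent spaces are $G$-equivariant. Consequently $\phi^{*}(\mathcal{O}_\nu)$ is invariant under left $G$-translation on $G \times S_X$, and a standard Fubini-type structure theorem for such invariant distributions (realizing $C_c^{\infty}(G \times S_X)/(1 - L_g)$ as $C_c^{\infty}(S_X)$ via integration along $G$) yields a unique distribution $m_{\nu, X}$ on $S_X$, supported on $\mathcal{O}_\nu \cap S_X$, with $\phi^{*}(\mathcal{O}_\nu) = m_G \otimes m_{\nu, X}$.

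The technical heart of the argument, and the step I expect to be the main obstacle, is showing that $m_{\nu, X}$ is a \emph{smooth} measure on the smooth submanifold $\mathcal{O}_\nu \cap S_X$ rather than a distribution with worse transverse singularities. The strategy is a local calculation near a point $\xi_0 \in \mathcal{O}_\nu \cap S_X$. Using the transversality of $\mathcal{O}_\nu$ and $S_X$ at $\xi_0$, one can pick a tubular-neighborhood trivialization $\mathfrak{g}^{*} \cong \mathcal{O}_\nu \times B$ in which the canonical measure on $\mathcal{O}_\nu$ appears as a smooth top-dimensional density on $\mathcal{O}_\nu$ tensored with $\delta_{0}$ on the transverse disc $B$, and matching coordinates on $S_X$ in which the directions normal to $\mathcal{O}_\nu \cap S_X$ inside $S_X$ map under $d\phi$ isomorphically onto $B$. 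Pulling back the canonical measure through the submersion $\phi$ in these coordinates presents $\phi^{*}(\mathcal{O}_\nu)$ locally as Haar on $G$ tensored with a smooth density on $\mathcal{O}_\nu \cap S_X$, and uniqueness of the decomposition identifies this density with $m_{\nu, X}$. The delicate part is checking that the fiber measures prescribed by the exact sequence agree, under these trivializations, with the product of Haar and Lebesgue, so that no distributional delta factors are secretly absorbed into what I want to be a smooth density.

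To verify that $m_{\nu, X}$ does not depend on the choice of $m_G$, I rescale $m_G$ by a constant $c > 0$; the induced Lebesgue measures on $\mathfrak{g}^{*}$ and $S_X$, and the fiber measures on $\mathcal{F}_\nu$ constructed from the exact sequence, all rescale by compatible powers of $c$, so $\phi^{*}$ itself scales linearly in $c$. That scaling is absorbed exactly by the $m_G$ factor in the product $m_G \otimes m_{\nu, X}$, leaving $m_{\nu, X}$ invariant.
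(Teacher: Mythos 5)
Your argument is correct and follows essentially the route the paper intends: the paper explicitly declines to write out the verification (``we leave the verifications of these well-known facts to the reader'') and only records the explicit density $m_{\nu,X}$, obtained from the exact sequence $0\to T_X^*\mathcal{O}_X\to T_{\nu}^*\mathcal{O}_{\nu}\to T^*_{\nu}(\mathcal{O}_{\nu}\cap S_X)\to 0$, which is exactly the quotient density your local transversality computation produces. Your left-invariance-plus-Fubini decomposition, the local smoothness check at a point of $\mathcal{O}_\nu\cap S_X$, and the scaling argument for independence of $m_G$ are the standard fillers for the steps the paper omits, and they all go through.
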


\indent One can write down $m_{\nu,X}$ by giving a top dimensional form on $\mathcal{O}_{\nu}\cap S_X$, well-defined up to sign. Essentially, we just divide the canonical measure on $\mathcal{O}_{\nu}$ by the canonical measure on $\mathcal{O}_X$. More precisely, the composition of the inclusion $[\mathfrak{g},\nu]\hookrightarrow \mathfrak{g}$ and the projection defined by the decomposition $\mathfrak{g}=[\mathfrak{g},X]\oplus Z_{\mathfrak{g}}(Y)$ yields a map
$$T_{\nu}\mathcal{O}_{\nu}\cong [\mathfrak{g},\nu]\rightarrow [\mathfrak{g},X]\cong T_X\mathcal{O}_X.$$
This surjection pulls back to an exact sequence
$$0\rightarrow T_X^*\mathcal{O}_X\rightarrow T_{\nu}^*\mathcal{O}_{\nu}\rightarrow T^*_{\nu}(\mathcal{O}_{\nu}\cap S_X)\rightarrow 0.$$
\noindent The canonical measures on $\mathcal{O}_{\nu}$ and $\mathcal{O}_X$ determine top dimensional alternating tensors up to sign on $T_{\nu}^*\mathcal{O}_{\nu}$ and $T_X^*\mathcal{O}_X$. Hence, our exact sequence gives a top dimensional, alternating tensor on $T^*_{\nu}(\mathcal{O}_{\nu}\cap S_X)$, well-defined up to sign.\\

\indent Next, we need a proposition of Barbasch and Vogan. If $\nu\in \mathfrak{g}^*$, then define
$$\mathcal{N}_{\nu}=\overline{\cup_{t>0} \mathcal{O}_{t\nu}}\cap \mathcal{N}.$$

\begin{proposition} [Barbasch and Vogan] Suppose $\nu\in \mathfrak{g}^*$. If $\mathcal{O}_X$ is a nilpotent orbit, then $\mathcal{O}_X\subset \mathcal{N}_{\nu}$ if, and only if $\mathcal{O}_{\nu}\cap S_X\neq \emptyset$. Further, $\mathcal{O}_X\cap S_X=\{X\}$ for any nilpotent orbit $\mathcal{O}_X$.
\end{proposition}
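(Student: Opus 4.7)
The plan is to exploit a Kazhdan--Lusztig style contracting action on the Slodowy slice $S_X$ built from the $\mathfrak{sl}_2$-triple $\{X,H,Y\}$. Writing $\mathfrak{g}=\bigoplus_k \mathfrak{g}(k)$ for the $\text{ad}(H)$-eigenspace decomposition, $\mathfrak{sl}_2$-representation theory gives $X\in\mathfrak{g}(2)$, $Z_\mathfrak{g}(Y)\subset \bigoplus_{k\leq 0}\mathfrak{g}(k)$, and the direct sum decomposition $\mathfrak{g}=[\mathfrak{g},X]\oplus Z_\mathfrak{g}(Y)$. For $t>0$ set $\gamma(t)=\exp((\log t)H)\in G$ and define
\[
\sigma(t)\ :=\ t^{2}\,\text{Ad}\bigl(\gamma(t^{-1})\bigr).
\]
Since $\text{Ad}(\gamma(t))$ acts on $\mathfrak{g}(k)$ as the scalar $t^{k}$, one finds $\sigma(t)X=X$ and, for $Z=\sum_{k\leq 0}Z_k\in Z_\mathfrak{g}(Y)$,
\[
\sigma(t)(X+Z)=X+\sum_{k\leq 0} t^{\,2-k}Z_k\in S_X,
\]
so $\sigma(t)$ preserves $S_X$ and contracts it to $X$ as $t\to 0^+$. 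Since $\sigma(t)$ is an inner automorphism composed with scaling by $t^{2}$, we have $\sigma(t)\mathcal{O}_\mu=\mathcal{O}_{t^{2}\mu}$ for every $\mu\in\mathfrak{g}^*$.

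First suppose $\eta\in \mathcal{O}_\nu\cap S_X$. Then $\sigma(t)\eta\in \mathcal{O}_{t^{2}\nu}\cap S_X$ for all $t>0$ and $\sigma(t)\eta\to X$ as $t\to 0^+$, so $X\in\overline{\bigcup_{s>0}\mathcal{O}_{s\nu}}\cap\mathcal{N}=\mathcal{N}_\nu$ and hence $\mathcal{O}_X\subset \mathcal{N}_\nu$. Conversely, assume $\mathcal{O}_X\subset \mathcal{N}_\nu$. The remark immediately after Proposition 4.1 records that $\phi\colon G\times S_X\to\mathfrak{g}^*$ is a submersion, hence $G\cdot S_X$ is an open neighborhood of $X$. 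Pick $g_n\in G$ and $t_n>0$ with $g_n\cdot(t_n\nu)\to X$; for $n$ large, $g_n\cdot(t_n\nu)\in G\cdot S_X$, so some $h_n\cdot(t_n\nu)$ lies in $\mathcal{O}_{t_n\nu}\cap S_X$, and applying $\sigma(t_n^{-1/2})$ produces an element of $\mathcal{O}_\nu\cap S_X$.

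For the final assertion, the decomposition $\mathfrak{g}=[\mathfrak{g},X]\oplus Z_\mathfrak{g}(Y)$ identifies $Z_\mathfrak{g}(Y)$ with a linear complement of $T_X\mathcal{O}_X=[\mathfrak{g},X]$, so $S_X$ meets $\mathcal{O}_X$ transversely at $X$; by the implicit function theorem there is an open neighborhood $U\ni X$ in $\mathfrak{g}^*$ on which $\mathcal{O}_X\cap S_X\cap U=\{X\}$. If $\eta\in \mathcal{O}_X\cap S_X$ is arbitrary, then $\sigma(t)\eta\in\mathcal{O}_{t^{2}X}\cap S_X=\mathcal{O}_X\cap S_X$ (using $\sigma(t)X=X$ to see that $t^{2}X$ is $G$-conjugate to $X$), while $\sigma(t)\eta\to X$; for $t$ small enough this places $\sigma(t)\eta$ inside $U$, forcing $\sigma(t)\eta=X$ and hence $\eta=X$ by invertibility of $\sigma(t)$. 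The main technical point, I expect, is checking that $\sigma(t)$ genuinely preserves $S_X$ while acting as a dilation on coadjoint orbits; once this rescaling/contraction property is in hand, both equivalences and the slice statement reduce to clean topological arguments in a neighborhood of $X$.
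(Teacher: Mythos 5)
Your proof of the equivalence is correct and is essentially the paper's own argument: the paper uses the same contracting one-parameter family $t\gamma_t$ with $\gamma_t=\exp(-\tfrac{1}{2}\log(t)H)$ (your $\sigma(t)$ is that map reparametrized by $t\mapsto t^2$), combined with the openness of $G\cdot S_X$ coming from the submersion $\phi$. For the final assertion $\mathcal{O}_X\cap S_X=\{X\}$ the paper merely cites Barbasch--Vogan, whereas your transversality-plus-contraction argument supplies a correct self-contained proof of that part.
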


A proof of the last sentence can be found on the top half of page 48 of \cite{BV}. We recall the proof of the first part of the proposition because it gives us an excuse to introduce some notation we will use later. Recall $G\cdot S_X\subset \mathfrak{g}^*$ is an open subset containing $\mathcal{O}_X$; thus, $\mathcal{O}_X\subset \mathcal{N}_{\nu}$ iff $\mathcal{O}_{t\nu}\cap S_X\neq \emptyset$ for sufficiently small $t>0$. However, if $\gamma_t=\text{exp}(-\frac{1}{2}(\log(t))H)$, then  $$\mathcal{O}_{t\nu}\cap S_X=t\gamma_t(\mathcal{O}_{\nu}\cap S_X).$$ In particular, $\mathcal{O}_{\nu}\cap S_X \neq \emptyset$ iff $\mathcal{O}_{t\nu}\cap S_X\neq \emptyset$ for any $t>0$.\\
\indent Now, back to the proof of Proposition 4.1. We know that the limit $\lim_{t\rightarrow 0^+} \mathcal{O}_{t\nu}$ exists and is a linear combination of nilpotent coadjoint orbits. Observe that the support of the distribution must be contained in $\mathcal{N}_{\nu}$. Hence, by Proposition 4.3, we are summing over orbits $\mathcal{O}_X$ such that $\mathcal{O}_{\nu}\cap S_X\neq \emptyset$. Checking the homogeneity degree, we observe that these orbits $\mathcal{O}_X$ must satisfy $\dim \mathcal{O}_X=\dim \mathcal{O}_{\nu}$. Finally, since every orbit intersecting $S_X$ is transverse to $S_X$, the condition $\dim \mathcal{O}_X=\dim \mathcal{O}_{\nu}$ is equivalent to $\mathcal{O}_{\nu}\cap S_X$ being finite. Thus, we realize that only the orbits satisfying $\#(\mathcal{O}_{\nu}\cap S_X)<\infty$ can occur in the limit $\lim_{t\rightarrow 0^+}\mathcal{O}_{t\nu}$.\\
\indent Now, to compute the coefficients of these orbits, we use Rao's lemma. To finish the proof, it is enough to show
$$\lim_{t\rightarrow 0^+} \mathcal{O}_{t\nu}=\#(\mathcal{O}_{\nu}\cap S_X)\mathcal{O}_X\ \text{on}\ G\cdot S_X.$$
To do this, we apply the injective map $\phi^*$ and Rao's lemma to reduce the question to proving
$$\lim_{t\rightarrow 0^+} m_{t\nu,X}=\#(\mathcal{O}_{\nu}\cap S_X)\delta_X.$$
Observe that the measure $m_{t\nu,X}$ is supported on the finite set $\mathcal{O}_{t\nu}\cap S_X$. If $\mathcal{O}_{\nu}\cap S_X=\{\nu_1,\ldots,\nu_k\}$, then $\mathcal{O}_{t\nu}\cap S_X=\{t\gamma_t\nu_1,\ldots,t\gamma_t\nu_k\}$. We observe $t\gamma_t\nu_j\rightarrow X$ as $t\rightarrow 0^+$ and $m_{t\nu,X}|_{t\gamma_t\nu_j}\rightarrow \delta_X$ as $t\rightarrow 0^+$. These facts together imply the desired relation $\lim_{t\rightarrow 0} m_{t\nu,X}=\#(\mathcal{O}_{\nu}\cap S_X)\delta_X$. This completes the proof of Proposition 4.1.\\
\\
Next, we record a couple of useful corollaries to Proposition 4.1. Suppose $\nu\in \mathfrak{g}^*$ is semisimple, and write
$$\lim_{t\rightarrow 0^+} \mathcal{O}_{t\nu}=\sum n_G(\mathcal{O},\nu) \mathcal{O}.$$
If $\mathcal{O}$ is an orbit occuring in the sum, then we let $\mathcal{O}_{\mathbb{C}}=\text{Int}\mathfrak{g}_{\mathbb{C}}\cdot \mathcal{O}$ denote its complexification, and we denote by $n_{\operatorname{Int}\mathfrak{g}_{\mathbb{C}}}(\mathcal{O}_{\mathbb{C}},\nu)$ the coefficient in the corresponding limit formula of $\text{Int}\mathfrak{g}_{\mathbb{C}}$-orbits. 

\begin{corollary}  The coefficients $n_G(\mathcal{O},\nu)$ are non-negative integers. Moreover, $n_G(\mathcal{O},\nu)\leq n_{\operatorname{Int}\mathfrak{g}_{\mathbb{C}}}(\mathcal{O}_{\mathbb{C}},\nu)$. 
\end{corollary}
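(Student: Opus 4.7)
The plan is to deduce Corollary 4.4 directly from Proposition 4.1 by applying it twice: once to the real reductive group $G$ acting on $\mathfrak{g}^*$, and once to the complex reductive group $\operatorname{Int}\mathfrak{g}_{\mathbb{C}}$ (viewed as a real Lie group) acting on $\mathfrak{g}_{\mathbb{C}}^*$. The whole point is that both coefficients are realized as cardinalities of slice intersections, and then an inclusion of slices gives the inequality.

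First, fix an orbit $\mathcal{O} = \mathcal{O}_X$ that appears in the sum and choose an $\mathfrak{sl}_2$-triple $\{X, H, Y\} \subset \mathfrak{g}$. By Proposition 4.1, if $\mathcal{O}_X$ contributes then
\[
n_G(\mathcal{O}_X, \nu) = \#(\mathcal{O}_\nu \cap S_X),
\]
where $S_X = X + Z_{\mathfrak{g}}(Y)$. Since this is the cardinality of a finite set, it is immediately a non-negative integer, proving the first assertion.

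Next, I apply Proposition 4.1 to $\operatorname{Int}\mathfrak{g}_{\mathbb{C}}$ using the \emph{same} triple $\{X, H, Y\}$, which is also a complex $\mathfrak{sl}_2$-triple in $\mathfrak{g}_{\mathbb{C}}$. Writing $S_X^{\mathbb{C}} = X + Z_{\mathfrak{g}_{\mathbb{C}}}(Y)$ and $\mathcal{O}_\nu^{\mathbb{C}} = \operatorname{Int}\mathfrak{g}_{\mathbb{C}} \cdot \nu$, Proposition 4.1 applied on the complex side gives
\[
n_{\operatorname{Int}\mathfrak{g}_{\mathbb{C}}}(\mathcal{O}_{\mathbb{C}}, \nu) = \#(\mathcal{O}_\nu^{\mathbb{C}} \cap S_X^{\mathbb{C}}).
\]
The identifications $Z_{\mathfrak{g}_{\mathbb{C}}}(Y) = Z_{\mathfrak{g}}(Y) \otimes_{\mathbb{R}} \mathbb{C}$ and $\mathcal{O}_\nu \subset \mathcal{O}_\nu^{\mathbb{C}}$ are immediate, giving the set inclusion
\[
\mathcal{O}_\nu \cap S_X \;\subset\; \mathcal{O}_\nu^{\mathbb{C}} \cap S_X^{\mathbb{C}}.
\]
Taking cardinalities then gives the desired inequality.

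The only real obstacle is verifying that the finiteness hypothesis transfers from the real to the complex side, so that Proposition 4.1 produces a well-defined cardinality on both sides. This is a dimension check: finiteness of $\mathcal{O}_\nu \cap S_X$ is equivalent to $\dim_{\mathbb{R}} \mathcal{O}_X = \dim_{\mathbb{R}} \mathcal{O}_\nu$, finiteness of $\mathcal{O}_\nu^{\mathbb{C}} \cap S_X^{\mathbb{C}}$ is equivalent to $\dim_{\mathbb{C}} \mathcal{O}_{\mathbb{C}} = \dim_{\mathbb{C}} \mathcal{O}_\nu^{\mathbb{C}}$, and the two are equivalent by $\dim_{\mathbb{C}} \mathcal{O}_{\mathbb{C}} = \dim_{\mathbb{R}} \mathcal{O}$ (and similarly for $\nu$). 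Once this compatibility is noted, there is no further content beyond the set-theoretic inclusion.
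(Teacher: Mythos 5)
Your argument is correct and is essentially the paper's own proof: both coefficients are realized via Proposition 4.1 as cardinalities of the slice intersections $\mathcal{O}_{\nu}\cap(X+Z_{\mathfrak{g}}(Y))$ and $(\operatorname{Int}\mathfrak{g}_{\mathbb{C}}\cdot\nu)\cap(X+Z_{\mathfrak{g}_{\mathbb{C}}}(Y))$, and the inequality follows from the evident inclusion of the first set in the second. Your closing dimension-count verifying that finiteness passes to the complex side is a detail the paper leaves implicit, but it does not change the route.
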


The coefficient $n_G(\mathcal{O},\nu)$ is a non-negative integer because it is the cardinality of a finite set by Proposition 4.1. Note $n_G(\mathcal{O},\nu)$ is the cardinality of the finite set $\mathcal{O}_{\nu}\cap (X+Z_{\mathfrak{g}}(Y))$ while $n_{\operatorname{Int}\mathfrak{g}_{\mathbb{C}}}(\mathcal{O}_{\mathbb{C}},\nu)$ is the cardinality of the set $(\text{Int}\mathfrak{g}_{\mathbb{C}}\cdot \mathcal{O}_{\nu})\cap (X+Z_{\mathfrak{g}_{\mathbb{C}}}(Y))$. Since the former set is contained in the later set, we deduce $n_G(\mathcal{O},\nu)\leq n_{\operatorname{Int}\mathfrak{g}_{\mathbb{C}}}(\mathcal{O}_{\mathbb{C}},\nu)$.

\begin{corollary} Let $\nu\in \mathfrak{g}^*$, let $L=Z_G(\nu)$, and suppose $\mathcal{O}$ is a nilpotent orbit with $n_G(\mathcal{O},\nu)\neq 0$. After conjugating by $G$, we may assume $\nu\in S_X$. There exists a maximal compact subgroup $K\subset G$ such that $Z_K\{X,H,Y\}\subset Z_G\{X,H,Y\}$ and $K\cap L\subset L$ are maximal compact subgroups. If $K$ is such a group, then
$$\left|Z_K(X)/Z_{K\cap L}(X)\right|\leq n_G(\mathcal{O},\nu).$$
\end{corollary}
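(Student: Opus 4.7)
The plan is to apply Proposition 4.1, which identifies $n_G(\mathcal{O},\nu)$ with $\#(\mathcal{O}_\nu\cap S_X)$, and then to construct an explicit injection
$$\phi:\ Z_K(X)/Z_{K\cap L}(X)\hookrightarrow \mathcal{O}_\nu\cap S_X.$$
Existence of a maximal compact $K$ meeting both stated conditions is a standard Cartan-involution construction: one takes $K=G^\theta$ for a Cartan involution $\theta$ of $G$ chosen to stabilize $L=Z_G(\nu)$ (available because $\nu$ is semisimple) and to act on the fixed $\mathfrak{sl}_2$-triple by $\theta X=-Y$, $\theta H=-H$, $\theta Y=-X$ (available by Kostant--Rallis); these two requirements can be met simultaneously after an initial $G$-conjugation of the data.

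The key observation is that for any such $K$,
$$Z_K(X)=Z_K\{X,H,Y\}.$$
Indeed, for $k\in K=G^\theta$ the operator $\operatorname{Ad}(k)$ commutes with $\theta$, so $\operatorname{Ad}(k)X=X$ forces
$$\operatorname{Ad}(k)Y=-\operatorname{Ad}(k)\theta X=-\theta\operatorname{Ad}(k)X=-\theta X=Y,$$
and hence also $\operatorname{Ad}(k)H=\operatorname{Ad}(k)[X,Y]=H$. In particular, $Z_K(X)$ preserves $Z_{\mathfrak{g}}(Y)$, and therefore preserves the affine slice $S_X=X+Z_{\mathfrak{g}}(Y)$.

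Define $\phi(k)=k\cdot\nu$ for $k\in Z_K(X)$; by the previous paragraph $\phi$ lands in $\mathcal{O}_\nu\cap S_X$. Two elements $k_1,k_2\in Z_K(X)$ satisfy $\phi(k_1)=\phi(k_2)$ iff $k_1^{-1}k_2\in Z_G(\nu)=L$, which since $k_1^{-1}k_2\in Z_K(X)$ is equivalent to $k_1^{-1}k_2\in Z_K(X)\cap L=Z_{K\cap L}(X)$. Thus $\phi$ descends to an injection on $Z_K(X)/Z_{K\cap L}(X)$, and comparing cardinalities with Proposition 4.1 yields the desired inequality.

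The main obstacle I anticipate is justifying the clean identity $Z_K(X)=Z_K\{X,H,Y\}$: this rests on the explicit realization of $K$ as $G^\theta$ with $\theta$ intertwining $X$ and $Y$, which in turn is the substance of the existence statement. Once that identification is in place, the construction of $\phi$ and the verification that its fibers are exactly the $Z_{K\cap L}(X)$-cosets are entirely formal.
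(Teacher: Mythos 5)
Your opening move --- letting a compact group act on the finite set $\mathcal{O}_\nu\cap S_X$, identifying $n_G(\mathcal{O},\nu)=\#(\mathcal{O}_\nu\cap S_X)$ via Proposition 4.1, and bounding that cardinality below by the orbit of $\nu$, whose stabilizer is the intersection with $L$ --- is exactly the paper's first step, except that the paper runs it with $Z_K\{X,H,Y\}$ (which visibly preserves $S_X$) and obtains $\left|Z_K\{X,H,Y\}/Z_{K\cap L}\{X,H,Y\}\right|\leq n_G(\mathcal{O},\nu)$. The real content of the corollary is the passage from the triple-centralizer to $Z_K(X)$, and there you diverge: the paper proves that the natural injection $Z_K\{X,H,Y\}/Z_{K\cap L}\{X,H,Y\}\hookrightarrow Z_K(X)/Z_{K\cap L}(X)$ is onto by a group-theoretic argument (the maximal compact meets every component of $Z_G\{X,H,Y\}$, every component of $Z_G(X)$ meets its Levi factor $Z_G\{X,H,Y\}$, and an injectivity check after complexifying), valid for \emph{any} $K$ satisfying the two stated conditions; you instead try to force the identity $Z_K(X)=Z_K\{X,H,Y\}$ outright by realizing $K=G^\theta$ with $\theta X=-Y$.

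That is precisely where the gap lies, and it is the step you flag and then assert without proof. You need a single Cartan involution $\theta$ that is simultaneously (a) a Kostant--Sekiguchi normalization of the \emph{fixed} triple and (b) adapted to $L=Z_G(\nu)$ for a point $\nu$ of the fixed slice $S_X$. These are two independent rigidity constraints on $\theta$, and their simultaneous solvability is essentially equivalent to the existence statement you are trying to prove; it rests on the chain $G\supset L\supset Z_G\{X,H,Y\}_0$, which in turn uses the finiteness of $\mathcal{O}_\nu\cap S_X$, i.e.\ the hypothesis $n_G(\mathcal{O},\nu)\neq 0$ --- a hypothesis your argument never invokes, which is a warning sign. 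Even granting such a $\theta$, your argument establishes the inequality only for that particular $K$, whereas the corollary asserts it for \emph{every} $K$ with $Z_K\{X,H,Y\}$ and $K\cap L$ maximal compact; to close that remaining gap you would need to show either that every such $K$ arises from a normalized $\theta$, or that $\left|Z_K(X)/Z_{K\cap L}(X)\right|$ does not depend on the choice of such $K$ --- and that independence is, in effect, exactly what the paper's two commutative diagrams deliver. So the counting skeleton is correct and the fibers-of-$\phi$ computation is fine, but the load-bearing existence/normalization step is missing.
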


First, if $n_G(\mathcal{O},\nu)\neq 0$, then $Z_G\{X,H,Y\}/Z_L\{X,H,Y\}$ acts faithfully on the finite set $\mathcal{O}_{\nu}\cap S_X$ by Corollary 3.1.5. In particular, we have a chain of reductive groups
$$G\supset L\supset Z_G\{X,H,Y\}_0$$
where $Z_G\{X,H,Y\}$ is the identity component of $Z_G\{X,H,Y\}$. Recall that any compact subroup of a reductive Lie group is contained in a maximal compact subgroup of a reductive Lie group. It follows from this fact that there exists a maximal compact subgroup $K\subset G$ such that
$$K\cap L\subset L,\ K\cap Z_G\{X,H,Y\}_0\subset Z_G\{X,H,Y\}_0$$
are maximally compact subgroups. But, it is not difficult to see that whenever $K\subset G$ is a maximally compact subgroup, we have $K\cap Z_G\{X,H,Y\}\subset Z_G\{X,H,Y\}$ is maximally compact. This proves the first statement of the proposition.\\
\\
Note that $Z_K\{X,H,Y\}$ acts on the finite set $\mathcal{O}_{\nu}\cap S_X$ with stabilizer $Z_{K\cap L}\{X,H,Y\}$. Thus, we deduce $\left|Z_K\{X,H,Y\}/Z_{K\cap L}\{X,H,Y\}\right|\leq n_G(\mathcal{O},\nu)$. Hence, to prove the corollary, it is enough to show that the injection 
$$Z_K\{X,H,Y\}/Z_{K\cap L}\{X,Y,H\} \hookrightarrow Z_K(X)/Z_{K\cap L}(X)$$ is in fact a surjection.\\
To do this, we use two commutative diagrams. First, we have

$$\begin{CD}
Z_K\{X,H,Y\}/Z_{K\cap L}\{X,H,Y\}       @>>>       Z_G\{X,H,Y\}/Z_L\{X,H,Y\}\\
@VVV                                                @VVV\\
Z_K(X)/Z_{K\cap L}(X)                  @>>>        Z_G(X)/<Z_G(X)^0,Z_L(X)>
\end{CD}$$
where $Z_G(X)^0$ denotes the identity component of $Z_G(X)$ and $<Z_G(X)^0,Z_L(X)>$ denotes the group generated by $Z_G(X)^0$ and $Z_L(X)$. The top arrow is a surjection because the maximal compact subgroup $Z_K\{X,H,Y\}$ meets every component of the reductive Lie group $Z_G\{X,H,Y\}$. The arrow on the right is a surjection because every component of $Z_G(X)$ meets the Levi factor $Z_G\{X,H,Y\}$. Hence, to show that the arrow on the left is a surjection, it is enough to show that the botton arrow is an injection.\\
\indent To verify this last statement, we need some notation and a second commutative diagram. Find a real, reductive algebraic group $G_{\mathbb{R}}$ and a map $p:G\rightarrow G_{\mathbb{R}}$ with open image and finite kernel. Choose a maximal compact subgroup $K_{\mathbb{R}}\subset G_{\mathbb{R}}$ such that $p(K)\subset K_{\mathbb{R}}$, and choose a Levi subgroup $L_{\mathbb{R}}\subset G_{\mathbb{R}}$ such that $p:L\rightarrow L_{\mathbb{R}}$ has open image and finite kernel. Let $L_{\mathbb{C}}$ be the complexification of $L_{\mathbb{R}}$, and let $U\subset G_{\mathbb{C}}$ be a maximal compact subgroup with $K_{\mathbb{R}}=U\cap G_{\mathbb{R}}$. Choose a parabolic subgroup $P_{\mathbb{C}}\subset G_{\mathbb{C}}$ with Levi factor $L_{\mathbb{C}}$. Then we have the following commutative diagram.

$$\begin{CD}
Z_K(X)/Z_{K\cap L}(X)    @>>>     Z_G(X)/<Z_G(X)^0,Z_L(X)>\\
@VVV															@VVV\\
Z_U(X)/Z_{U\cap L_{\mathbb{C}}}(X)   @>>>    Z_{G_{\mathbb{C}}}(X)/Z_{P_{\mathbb{C}}}(X)
\end{CD}$$

The left and bottom maps are easily seen to be injective; hence the top map also must be injective. The corollary follows.\\
\\
Next, we recall a proposition of Dan Barbasch \cite{Ba}, which provides an explicit formula for $n_{\operatorname{Int}\mathfrak{g}_{\mathbb{C}}}(\mathcal{O}_{\mathbb{C}},\nu)$. Let $\nu\in \mathfrak{g}_{\mathbb{C}}^*$ be a semisimple element, let $L=Z_{\text{Int}\mathfrak{g}_{\mathbb{C}}}(\nu)$, and let $\mathfrak{l}\subset \mathfrak{p}$ be a parabolic containing $\mathfrak{l}=\text{Lie}(L)$. Suppose $X\in (\mathfrak{g}/\mathfrak{p})^*$ is a nilpotent element such that $\mathcal{O}^{\text{Int}\mathfrak{g}_{\mathbb{C}}}_X\cap (\mathfrak{g}/\mathfrak{p})^*\subset (\mathfrak{g}/\mathfrak{p})^*$ is open.

\begin{proposition} [Barbasch] We have the limit formula
$$\lim_{t\rightarrow 0^+} \mathcal{O}^{\text{Int}\mathfrak{g}_{\mathbb{C}}}_{t\nu}=\left|Z_{\text{Int}\mathfrak{g}_{\mathbb{C}}}(X)/Z_{P_{\mathbb{C}}}(X)\right|\mathcal{O}^{\text{Int}\mathfrak{g}_{\mathbb{C}}}_X$$
where $P_{\mathbb{C}}=N_{\text{Int}\mathfrak{g}_{\mathbb{C}}}(\mathfrak{p}_{\mathbb{C}})$.
\end{proposition}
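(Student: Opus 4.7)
The plan is to apply Proposition~4.1 in the complex setting and then identify the contributing nilpotent orbit and count $\mathcal{O}_{\nu}\cap S_X$. Proposition~4.1 applies verbatim to the complex reductive Lie group $G_{\mathbb{C}} := \operatorname{Int}\mathfrak{g}_{\mathbb{C}}$, yielding
$$\lim_{t\to 0^+}\mathcal{O}^{G_{\mathbb{C}}}_{t\nu}=\sum_{\mathcal{O}_{X'}}\#(\mathcal{O}_{\nu}\cap S_{X'})\,\mathcal{O}^{G_{\mathbb{C}}}_{X'},$$
summed over nilpotent orbits with finite nonempty intersection with $\mathcal{O}_{\nu}$. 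It then suffices to show (i) only $\mathcal{O}_X$ contributes, and (ii) the coefficient is $|Z_{G_{\mathbb{C}}}(X)/Z_{P_{\mathbb{C}}}(X)|$.

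For (i): the Killing form identifies $(\mathfrak{g}/\mathfrak{p})^*\cong\mathfrak{n}_{P_{\mathbb{C}}}$, so the hypothesis amounts to saying $X$ is a Richardson element for $P_{\mathbb{C}}$, and $\dim\mathcal{O}_X = 2\dim\mathfrak{n}_{P_{\mathbb{C}}} = \dim\mathcal{O}_{\nu}$. Since $L_{\mathbb{C}}\cap N_{P_{\mathbb{C}}} = \{1\}$, the unipotent group $N_{P_{\mathbb{C}}}$ acts freely on $\nu$; a dimension count then gives $N_{P_{\mathbb{C}}}\cdot\nu = \nu + \mathfrak{n}_{P_{\mathbb{C}}}$, so $\mathcal{O}_{\nu} = G_{\mathbb{C}}\cdot(\nu + \mathfrak{n}_{P_{\mathbb{C}}})$. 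Scaling $\nu\mapsto t\nu$ and passing to the closure gives $\mathcal{N}_{\nu}\subset G_{\mathbb{C}}\cdot\mathfrak{n}_{P_{\mathbb{C}}} = \overline{\mathcal{O}_X}$, the latter being the moment map image of $T^*(G_{\mathbb{C}}/P_{\mathbb{C}})$ since $X$ is Richardson. Any contributing $\mathcal{O}_{X'}$ has $\dim\mathcal{O}_{X'}=\dim\mathcal{O}_{\nu}$ by transversality and $\mathcal{O}_{X'}\subset\mathcal{N}_{\nu}$ by Proposition~4.3, so $\mathcal{O}_{X'} = \mathcal{O}_X$ is forced.

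For (ii): I would introduce the Spaltenstein variety $\mathcal{B}_X$ of parabolics $Q$ conjugate to $P_{\mathbb{C}}$ with $X\in\mathfrak{n}_Q$. For $X$ Richardson, $\mathcal{B}_X$ is $0$-dimensional and conjugation by $Z_{G_{\mathbb{C}}}(X)$ is transitive with stabilizer $Z_{P_{\mathbb{C}}}(X)$, so $|\mathcal{B}_X| = |Z_{G_{\mathbb{C}}}(X)/Z_{P_{\mathbb{C}}}(X)|$. To exhibit a bijection $\mathcal{O}_{\nu}\cap S_X\leftrightarrow\mathcal{B}_X$, I would exploit the contracting flow $\phi(t)=t\gamma_t$ on $S_X$ from the proof of Proposition~4.1: since $\phi(t)\xi\to X$ as $t\to 0^+$ for every $\xi\in S_X$, Bialynicki-Birula theory for the induced $\mathbb{C}^*$-action on $S_X$ associates a canonical attracting parabolic $Q_{\xi}\in\mathcal{B}_X$ to each $\xi\in\mathcal{O}_{\nu}\cap S_X$.

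The main obstacle is verifying this bijection: each $Q\in\mathcal{B}_X$ must arise from a unique $\xi\in\mathcal{O}_{\nu}\cap S_X$. Existence should follow from a transversality argument using the balance $\dim\mathcal{O}_{\nu}+\dim S_X = \dim\mathfrak{g}_{\mathbb{C}}$, together with the fact that each $Q\in\mathcal{B}_X$ produces a local inverse via the $N_{Q}$-orbit through $\nu$. Uniqueness then follows from the matching cardinalities, or alternatively from an explicit computation of the transversal intersection performed in an $\mathfrak{sl}_2$-adapted basis for $S_X$ obtained by choosing $\{X,H,Y\}$ so that $H\in\mathfrak{l}$ and hence $\nu\in Z_{\mathfrak{g}_{\mathbb{C}}}(H)$.
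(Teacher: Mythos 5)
First, a point of reference: the paper does not prove this proposition at all --- it is imported verbatim from Barbasch \cite{Ba} as an external input, so there is no internal argument to compare yours against. Judged on its own terms, your part (i) is sound: applying Proposition 4.1 to $G_{\mathbb{C}}=\operatorname{Int}\mathfrak{g}_{\mathbb{C}}$, using $N_{P_{\mathbb{C}}}\cdot\nu=\nu+\mathfrak{n}_{P_{\mathbb{C}}}$ (free action plus closedness of unipotent orbits) to get $\mathcal{O}_{\nu}=G_{\mathbb{C}}\cdot(\nu+\mathfrak{n}_{P_{\mathbb{C}}})$, hence $\mathcal{N}_{\nu}\subset\overline{\mathcal{O}_X}$, and then concluding by the dimension count that $\mathcal{O}_X$ is the only orbit that can contribute --- this is correct and is the standard reduction.

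Part (ii), however, has a genuine gap, and it is precisely where the content of Barbasch's formula lives. Your identification $|\mathcal{B}_X|=|Z_{G_{\mathbb{C}}}(X)/Z_{P_{\mathbb{C}}}(X)|$ is fine (irreducibility of $\mu^{-1}(\mathcal{O}_X)$ forces transitivity of $Z_{G_{\mathbb{C}}}(X)$ on the fiber $\mu^{-1}(X)$), but the asserted bijection $\mathcal{O}_{\nu}\cap S_X\leftrightarrow\mathcal{B}_X$ is not established. The forward map is murky: Bialynicki--Birula theory for the contracting action on $S_X$ only retracts $S_X$ onto the fixed point $X$; to produce a parabolic you must instead take the limit $\lim_{t\to 0}\gamma_t g_{\xi}P\gamma_t^{-1}$ in the projective variety $G_{\mathbb{C}}/P_{\mathbb{C}}$, where $\xi=g_{\xi}\nu$ (well-defined since $Z_{G_{\mathbb{C}}}(\nu)\subset P_{\mathbb{C}}=N_{G_{\mathbb{C}}}(P_{\mathbb{C}})$). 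More seriously, your argument for bijectivity is circular: you cannot derive uniqueness ``from the matching cardinalities,'' since $\#(\mathcal{O}_{\nu}\cap S_X)$ is the unknown being computed. And the surjectivity sketch via ``the $N_Q$-orbit through $\nu$'' fails on dimension grounds: for $Q\in\mathcal{B}_X$ the affine piece $h\nu+\mathfrak{n}_Q$ has dimension $\tfrac12\dim\mathcal{O}_X$ while $S_X$ has codimension $\dim\mathcal{O}_X$, so their expected intersection is empty; the balance $\dim\mathcal{O}_{\nu}+\dim S_X=\dim\mathfrak{g}_{\mathbb{C}}$ holds only for the full orbit, not piece by piece, and the limiting parabolic $Q_{\xi}$ need not coincide with $g_{\xi}P_{\mathbb{C}}g_{\xi}^{-1}$ in the first place. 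What is actually needed is a conservation-of-number argument: the family $\mathcal{A}_t=\{(gP_{\mathbb{C}},\eta):\eta\in tg\nu+\mathfrak{n}_{gP_{\mathbb{C}}g^{-1}}\}$ maps bijectively to $\mathcal{O}_{t\nu}$ for $t\neq 0$ and degenerates at $t=0$ to $T^*(G_{\mathbb{C}}/P_{\mathbb{C}})$, whose fiber product with $S_X$ over $\mathfrak{g}_{\mathbb{C}}$ is exactly $\mu^{-1}(X)$ because $\mathcal{O}_X\cap S_X=\{X\}$; one must then verify properness and transversality of this family over $t\in[0,1]$ to conclude that the count is constant. That verification is the missing step.
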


In particular, if $Z_{\text{Int}\mathfrak{g}_{\mathbb{C}}}(X)$ is connected, then $\lim_{t\rightarrow 0^+} \mathcal{O}^{\text{Int}\mathfrak{g}_{\mathbb{C}}}_{t\nu}=\mathcal{O}^{\text{Int}\mathfrak{g}_{\mathbb{C}}}_X$. By a computation of Springer-Steinberg explained on page 88 of \cite{CM}, this is true when $\mathfrak{g}_{\mathbb{C}}\cong \mathfrak{gl}(n,\mathbb{C})$. Moreover, every nilpotent coadjoint orbit for $\text{GL}(n,\mathbb{C})$ can be written as such a limit by a result of Ozeki and Wakimoto explained in section 7.2 of \cite{CM}. Further, it also follows from results in 7.2 and Barbasch's limit formula that two limit formulas 
$$\lim_{t\rightarrow 0^+}\mathcal{O}^{\text{GL}(n,\mathbb{C})}_{t\xi_1},\ \lim_{t\rightarrow 0^+}\mathcal{O}^{\text{GL}(n,\mathbb{C})}_{t\xi_2}$$
yield the same nilpotent orbit if and only if $Z_{\text{GL}(n,\mathbb{C})}(\xi_1)$ and $Z_{\text{GL}(n,\mathbb{C})}(\xi_2)$ are conjugate. In the next corollary, we observe that these results also hold for $\text{GL}(n,\mathbb{R})$.\\
To state it, we define the moment map. Let $G$ be a reductive Lie group and let $\mathcal{P}$ be a conjugacy class of parabolic subgroups of $G$. Then 
$$T^*\mathcal{P}=\{(\mathfrak{p},\xi)|\ \mathfrak{p}\in \mathcal{P},\ \xi\in (\mathfrak{g}/\mathfrak{p})^*\subset \mathfrak{g}^*\}$$
and the moment map is defined by $\mu(\mathfrak{p},\xi)=\xi$. (Of course, the moment map can be defined for any Hamiltonian action of a Lie group; however, we do not need the more general definition here).

\begin{proposition} There exists a bijection between conjugacy classes of Levi factors of parabolic subgroups of $\text{GL}(n,\mathbb{R})$ and nilpotent coadjoint orbits for $\text{GL}(n,\mathbb{R})$. Suppose $\mathcal{L}$ is a conjugacy class of Levi factors, and let $\mathcal{P}$ be the conjugacy class of parabolics containing $\mathcal{L}$. Then the orbit $\mathcal{O}_{\mathcal{L}}$ is the unique open, dense orbit in the image of the moment map of the real generalized flag variety
$$\mathcal{O}_{\mathcal{L}}\subset \mu(T^*\mathcal{P}).$$
Alternately, we may choose $\xi\in \mathfrak{gl}(n,\mathbb{R})^*$ such that $Z_{\text{GL}(n,\mathbb{R})}(\xi)=L$. Then $\mathcal{O}_{\mathcal{L}}$ is also characterized by the limit formula
$$\lim_{t\rightarrow 0^+} \mathcal{O}_{t\xi}=\mathcal{O}_{\mathcal{L}}.$$ 
\end{proposition}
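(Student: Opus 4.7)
The plan is to reduce to the complex case, which is already in place via Barbasch's Proposition~4.6 together with the Springer--Steinberg connectedness of $Z_{\text{GL}(n,\mathbb{C})}(X)$, and then transfer back to $\mathbb{R}$ using Corollary~4.4 and the classical $\text{GL}(n)$-fact that two real matrices are $\text{GL}(n,\mathbb{R})$-conjugate if and only if they are $\text{GL}(n,\mathbb{C})$-conjugate. In particular, every complex nilpotent orbit of $\text{GL}(n,\mathbb{C})$ contains a unique real nilpotent orbit of $\text{GL}(n,\mathbb{R})$, and $\mathcal{O}_{\xi} = \mathcal{O}^{\mathbb{C}}_{\xi} \cap \mathfrak{g}^*$ for any $\xi \in \mathfrak{g}^*$.

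I would first establish the limit formula. Fix $L \in \mathcal{L}$ and pick $\xi \in \mathfrak{l}^*$ with $Z_G(\xi) = L$ (for instance, scalars on each $\mathfrak{gl}(n_i,\mathbb{R})$ block with distinct values). Proposition~4.1 gives $\lim_{t \to 0^+} \mathcal{O}_{t\xi}$ as a nonnegative integer combination of nilpotent orbits. Proposition~4.6 applied to a complex parabolic $\mathfrak{p}_{\mathbb{C}}$ with Levi $\mathfrak{l}_{\mathbb{C}}$, together with the Springer--Steinberg connectedness remark, identifies the complex limit as $\mathcal{O}^{\mathbb{C}}_X$ with coefficient one, where $X$ is a Richardson element of $\mathfrak{p}_{\mathbb{C}}$. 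Corollary~4.4 bounds each real coefficient by the corresponding complex one, and the unique-real-orbit fact confines the real limit to the single real orbit $\mathcal{O}_X$ (the real form of $\mathcal{O}^{\mathbb{C}}_X$) with coefficient at most one.

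The hard part will be showing the coefficient is exactly one, which I plan to handle by Galois invariance. Choose $X \in \mathfrak{g}$ and complete it to a real $\mathfrak{sl}_2$-triple $\{X,H,Y\} \subset \mathfrak{g}$; then $S_X^{\mathbb{C}}$ is stable under complex conjugation, as is $\mathcal{O}^{\mathbb{C}}_{\xi}$ because $\xi$ is real. By Barbasch the complex intersection $\mathcal{O}^{\mathbb{C}}_{\xi} \cap S_X^{\mathbb{C}}$ consists of a single point; it is Galois-fixed, hence real, and by the $\text{GL}(n)$ conjugacy coincidence it lies in the real orbit $\mathcal{O}_{\xi}$. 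Therefore $\#(\mathcal{O}_{\xi} \cap S_X) = 1$, and Proposition~4.1 yields $\lim_{t \to 0^+} \mathcal{O}_{t\xi} = \mathcal{O}_X$.

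Finally, I would combine this with the moment map description to finish. Identifying $T^*(G/P) \cong G \times_P (\mathfrak{g}/\mathfrak{p})^*$ gives $\mu(T^*\mathcal{P}) = G \cdot (\mathfrak{g}/\mathfrak{p})^*$, and for $\text{GL}(n)$ one checks directly (from the Jordan picture of the unipotent radical, or via Richardson's theorem) that $L$ has a unique open dense orbit in $(\mathfrak{g}/\mathfrak{p})^*$, whose $G$-saturation is a single nilpotent orbit of Jordan type equal to the transpose of the partition defining $L$. This is the unique open dense orbit in $\mu(T^*\mathcal{P})$; its complexification is the complex Richardson orbit $\mathcal{O}^{\mathbb{C}}_X$, and by the unique-real-orbit fact it coincides with the orbit $\mathcal{O}_X$ produced by the limit formula. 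Both characterizations of $\mathcal{O}_{\mathcal{L}}$ therefore agree, and with Levi classes parametrized by block-size partitions and nilpotent orbits by Jordan-type partitions, $\mathcal{L} \mapsto \mathcal{O}_{\mathcal{L}}$ corresponds to the transpose-partition involution, giving a bijection between Levi conjugacy classes and nilpotent coadjoint orbits.
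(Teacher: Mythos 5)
Your proposal is correct, and its crucial step is genuinely different from the paper's. The bijection, the moment-map description, and the upper bound $n_G(\mathcal{O},\xi)\le n_{\operatorname{Int}\mathfrak{g}_{\mathbb C}}(\mathcal{O}_{\mathbb C},\xi)=1$ (via Corollary 4.4, Proposition 4.6, and Springer--Steinberg connectedness) are exactly as in the paper. Where you diverge is in showing the limit is nonzero: the paper does \emph{not} prove this here, but instead defers it to Sections 6--7, where $\lim_{t\to 0^+}\widehat{\mathcal{O}_{t\xi}}$ is computed explicitly on every Cartan via Rossmann's formula, Harish-Chandra descent, the matching conditions, and the semisimple limit formula, and is observed to be nonzero; only then is Proposition 4.7 complete. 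You instead argue locally on the slice: $\mathcal{O}^{\mathbb C}_{\xi}\cap S_X^{\mathbb C}$ is a single point by the complex limit formula, it is stable under complex conjugation since $\xi$ and the $\mathfrak{sl}_2$-triple are real, hence that point is real, and the $\operatorname{GL}(n)$ coincidence of real and complex conjugacy places it in $\mathcal{O}_{\xi}\cap S_X$, giving $\#(\mathcal{O}_{\xi}\cap S_X)=1$ and hence coefficient exactly one by Proposition 4.1. This Galois-fixed-point argument is shorter, more elementary, and makes Proposition 4.7 self-contained at the point where it is stated, whereas the paper's route has the virtue of producing the explicit Fourier transform formula (Theorem 7.1) along the way, which is its real goal. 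One small imprecision: the dense orbit in $(\mathfrak{g}/\mathfrak{p})^*$ is a $P$-orbit (Richardson), not an $L$-orbit, and the uniqueness of the open dense $G$-orbit in $\mu(T^*\mathcal{P})$ is cleanest via your own observation that $\mathcal{O}^{\mathbb C}_X\cap(\mathfrak{g}/\mathfrak{p})^*$ is Zariski-open, nonempty by the unique-real-form fact, and contained in the single real orbit $\mathcal{O}_X$; but this does not affect the argument.
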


The first $\text{GL}(n,\mathbb{R})$ statement follows immediately from the corresponding $\text{GL}(n,\mathbb{C})$ statement together with the fact that every nilpotent coadjoint $\text{GL}(n,\mathbb{C})$-orbit has an unique real form and the fact that $\mathcal{O}\cap (\mathfrak{gl}(n,\mathbb{R}))/\mathfrak{p})^*$ is dense if and only if $(\text{GL}(n,\mathbb{C})\cdot \mathcal{O})\cap (\mathfrak{gl}(n,\mathbb{C}))/\mathfrak{p}_{\mathbb{C}})^*$ is dense. It follows from Corollary 4.4 and the above $\text{GL}(n,\mathbb{C})$ remarks that $\lim_{t\rightarrow 0^+} \mathcal{O}_{t\xi}$ is either zero or $\mathcal{O}_{\mathcal{L}}$. In the last two sections of this article, we will use the results of the first two sections to explicitly compute $\lim_{t\rightarrow 0^+} \widehat{\mathcal{O}_{t\xi}}$. We will observe that the answer is non-zero. This will complete the proof of the proposition and compute the Fourier transform of the nilpotent orbit $\mathcal{O}_{\mathcal{L}}$.

\section{Limit Formulas for Even Nilpotent Orbits}

In \cite{Bo1}, Bozicevic proves the following limit formula for an even nilpotent orbit. 

\begin{proposition} [Rao,\ Bozicevic]
Suppose $\mathcal{O}_X$ is an even nilpotent orbit, let $\{X,H,Y\}$ be an $\mathfrak{sl}_2$-triple containing $X$, and let $Z=X-Y$. Then
$$\lim_{t\rightarrow 0^+} \mathcal{O}_{tZ}=\mathcal{O}_X.$$
\end{proposition}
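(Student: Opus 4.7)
The plan is to combine Proposition 4.1 with Barbasch's complex limit formula (Proposition 4.6) and the complex upper bound of Corollary 4.4. First, I would verify that $Z = X - Y$ is semisimple: the element $s = \exp\bigl(\tfrac{i\pi}{4}\operatorname{ad}(X+Y)\bigr) \in \operatorname{Int}\mathfrak{g}_{\mathbb{C}}$ satisfies $\operatorname{Ad}(s)Z = -iH$ by a direct $\mathfrak{sl}_2$ computation, so $Z$ is $\operatorname{Int}\mathfrak{g}_{\mathbb{C}}$-conjugate to the semisimple element $-iH$. Hence Proposition 4.1 applies and gives
$$\lim_{t \to 0^+}\mathcal{O}_{tZ} = \sum_{X'} \#\bigl(\mathcal{O}_Z \cap S_{X'}\bigr)\, \mathcal{O}_{X'}.$$
Since $-Y \in Z_{\mathfrak{g}}(Y)$, we have $Z = X + (-Y) \in S_X$, so $\mathcal{O}_X$ appears with coefficient at least $1$. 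For the matching upper bound, Corollary 4.4 reduces the problem to the complex count $n_{\operatorname{Int}\mathfrak{g}_{\mathbb{C}}}(\mathcal{O}_X^{\mathbb{C}}, Z)$. Applying Proposition 4.6 to the parabolic obtained by conjugating the Jacobson-Morozov parabolic $\bigoplus_{k \geq 0}(\mathfrak{g}_{\mathbb{C}})_k$ via $s$, and using that for an \emph{even} orbit the centralizer $Z_{\operatorname{Int}\mathfrak{g}_{\mathbb{C}}}(X)$ is contained in $P_{\mathbb{C}}$, the complex coefficient equals $1$. Thus the coefficient of $\mathcal{O}_X$ in the real limit is forced to equal $1$.

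The main obstacle is to exclude contributions from other real forms $\mathcal{O}_{X'}$ of the complex orbit $\mathcal{O}_X^{\mathbb{C}}$ (no other complex orbit can appear, by Proposition 4.3 together with the fact that every summand must have the same dimension as $\mathcal{O}_X$). For a real form $\mathcal{O}_{X'}$ with $X' \notin \mathcal{O}_X$, choose $g \in \operatorname{Int}\mathfrak{g}_{\mathbb{C}}$ with $\operatorname{Ad}(g)X = X'$; the complex count just established forces $\mathcal{O}_Z^{\mathbb{C}} \cap S_{X'}^{\mathbb{C}} = \{\operatorname{Ad}(g)Z\}$, so $\mathcal{O}_{X'}$ contributes iff $\operatorname{Ad}(g)Z$ lies in the real $G$-orbit $\mathcal{O}_Z$. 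If $\operatorname{Ad}(g)Z = \operatorname{Ad}(h)Z$ for some $h \in G$, then $l := h^{-1}g \in Z_{\operatorname{Int}\mathfrak{g}_{\mathbb{C}}}(Z)$ and $X'$ is $G$-conjugate to $\operatorname{Ad}(l)X$. Under the Cayley conjugation $s$, the centralizer $Z_{\operatorname{Int}\mathfrak{g}_{\mathbb{C}}}(Z)$ is identified with the Jacobson-Morozov Levi $L_{\mathbb{C}} = Z_{\operatorname{Int}\mathfrak{g}_{\mathbb{C}}}(H)$, and the Kostant-Sekiguchi correspondence sets up a bijection between real $G$-orbits of real nilpotents inside $\mathcal{O}_X^{\mathbb{C}}$ and real $G$-orbits of their paired elliptic elements inside $\mathcal{O}_Z^{\mathbb{C}}$. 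The pair $(\mathcal{O}_X, \mathcal{O}_Z)$ corresponds under this bijection, so the condition that $\operatorname{Ad}(l)X$ is real and $\operatorname{Ad}(l)Z$ lies in $\mathcal{O}_Z$ forces $\operatorname{Ad}(l)X \in \mathcal{O}_X$, whence $X' \in \mathcal{O}_X$, a contradiction.

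The hardest step will be making the Kostant-Sekiguchi matching rigorous in the form needed here, together with verifying that evenness of $\mathcal{O}_X$ is precisely what guarantees the association $\mathcal{O}_X \mapsto \mathcal{O}_Z$ (real nilpotent orbit $\mapsto$ real elliptic orbit) is well-defined and injective. Once this matching is in place, combining the coefficient analysis of the first paragraph with the exclusion argument above shows that only $\mathcal{O}_X$ appears in $\lim_{t \to 0^+} \mathcal{O}_{tZ}$, and with coefficient exactly $1$, proving the proposition.
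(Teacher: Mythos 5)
Your outline follows the paper's proof almost step for step: (1) apply Barbasch's formula (Proposition 4.6) to the Jacobson--Morozov parabolic of the triple to show the complex coefficient $n_{\operatorname{Int}\mathfrak{g}_{\mathbb{C}}}(\mathcal{O}_X^{\mathbb{C}},Z)$ equals $1$; (2) observe $Z=X-Y\in S_X$, so by Proposition 4.1 the orbit $\mathcal{O}_X$ occurs with coefficient at least $1$, and Corollary 4.4 then pins it at exactly $1$; (3) exclude the other real forms of $\mathcal{O}_X^{\mathbb{C}}$. Steps (1) and (2), including the Cayley computation $\operatorname{Ad}(s)Z=-iH$, are correct and match the paper (the paper phrases the count via the Barbasch--Vogan/Kostant isomorphism $Z_{\operatorname{Int}\mathfrak{g}_{\mathbb{C}}}(X)/Z_{P_{\mathbb{C}}}(X)\cong Z_{\operatorname{Int}\mathfrak{g}_{\mathbb{C}}}\{X,H,Y\}/Z_{P_{\mathbb{C}}}\{X,H,Y\}$ and the containment $Z_{\operatorname{Int}\mathfrak{g}_{\mathbb{C}}}\{X,H,Y\}\subset Z_{\operatorname{Int}\mathfrak{g}_{\mathbb{C}}}(Z)\subset P_{\mathbb{C}}$, but your version of the count is equivalent).

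The genuine gap is exactly where you flag it. Your reduction correctly shows that if $\mathcal{O}_{X'}$ contributes, then the unique point $Z'=X'-Y'$ of $\mathcal{O}_Z^{\mathbb{C}}\cap S_{X'}^{\mathbb{C}}$ must lie in the real orbit $\mathcal{O}_Z$, i.e.\ after a $G$-conjugation of the triple $\{X',H',Y'\}$ one has $X'-Y'=X-Y$ with $X'\notin\mathcal{O}_X$. What is needed to finish is precisely the implication that two real $\mathfrak{sl}_2$-triples with $X-Y=X'-Y'$ have $G$-conjugate nilpositive elements. This is Rao's elementary (unpublished) lemma, and the paper closes the argument by citing its proof on p.~146 of [CM]; it does not require the full Kostant--Sekiguchi machinery, and it holds for arbitrary nilpotent orbits, so your suggestion that evenness is what makes the association $\mathcal{O}_X\mapsto\mathcal{O}_Z$ well defined and injective is a misattribution. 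Evenness is used only in step (1), to guarantee that $\mathcal{O}_X^{\mathbb{C}}\cap(\mathfrak{g}_{\mathbb{C}}/\mathfrak{p}_{\mathbb{C}})^*$ is open for the Jacobson--Morozov parabolic and that $Z_{\operatorname{Int}\mathfrak{g}_{\mathbb{C}}}(Z)$ is conjugate to its Levi, so that Barbasch's formula applies with coefficient $1$. Until you supply Rao's lemma (or an equivalent elementary argument), the exclusion of the other real forms — the one substantive input beyond Section 4 — remains unproved.
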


This formula was first proved by Rao in an unpublished paper. Bozicevic's formula has a coefficient in front of the $\mathcal{O}_X$. In fact, this coefficient is one. Bozicevic's proof involves deep results of Schmid and Vilonen. In this section, we show how this formula follows easily from the far more elementary results of the last section.\\
\indent First, let $\mathfrak{p}_{\mathbb{C}}$ be the sum of non-negative eigenspaces for $\text{ad}_H$ on $\mathfrak{g}_{\mathbb{C}}$. Then $\mathcal{O}^{\text{Int}\mathfrak{g}_{\mathbb{C}}}_X\cap (\mathfrak{g}_{\mathbb{C}}/\mathfrak{p}_{\mathbb{C}})^*\subset (\mathfrak{g}_{\mathbb{C}}/\mathfrak{p}_{\mathbb{C}})^*$ is open and we may apply Barbasch's result, proposition 4.6. Further, a result of Barbasch-Vogan and Kostant explained on page 50 of \cite{CM} implies $$Z_{\text{Int}\mathfrak{g}_{\mathbb{C}}}(X)/Z_{P_{\mathbb{C}}}(X)\cong Z_{\text{Int}\mathfrak{g}_{\mathbb{C}}}\{X,H,Y\}/Z_{P_{\mathbb{C}}}\{X,H,Y\}.$$
But, $Z_{\text{Int}\mathfrak{g}_{\mathbb{C}}}\{X,H,Y\}\subset Z_{\text{Int}\mathfrak{g}_{\mathbb{C}}}(Z)\subset P_{\mathbb{C}}$. Hence, our coefficient is one and we have
$$\lim_{t\rightarrow 0^+} \mathcal{O}^{\text{Int}\mathfrak{g}_{\mathbb{C}}}_{tZ}=\mathcal{O}_X^{\text{Int}\mathfrak{g}_{\mathbb{C}}}.$$
\indent Now, we need to prove a real version of this limit formula. By Corollary 4.4, we know that we must have 
$\lim_{t\rightarrow 0^+} \mathcal{O}_{tZ}=\sum \mathcal{O}_{X'}$
where the sum is over some subset of real forms of $\mathcal{O}^{\text{Int}\mathfrak{g}_{C}}_X$. We know $\mathcal{O}_X$ must occur by proposition 4.1 and the observation $Z=X-Y\in X+Z_{\mathfrak{g}}(Y)$. Now suppose $\mathcal{O}_{X'}$ is some other real form of $\mathcal{O}_X^{\text{Int}\mathfrak{g}_{\mathbb{C}}}$ occuring in our limit formula. Given an $\mathfrak{sl}_2$-triple $\{X',H',Y'\}$ containing $X'$, we must have $\mathcal{O}_Z\cap (X'+Z_{\mathfrak{g}}(Y'))\neq \emptyset$. But, $Z'=X'-Y'\in X'+Z_{\mathfrak{g}}(Y')$ and $\mathcal{O}_{Z'}^{\text{Int}\mathfrak{g}_{\mathbb{C}}}\cap (X'+Z_{\mathfrak{g}_{\mathbb{C}}}(Y'))$ has one element by proposition 4.1 and the above $\text{Int}\mathfrak{g}_{\mathbb{C}}$-limit formula. Further, it was proven by Rao (unpublished) that $Z'=X'-Y'=X-Y=Z$ only if $X$ and $X'$ are conjugate (details of his elementary argument can be found on page 146 of \cite{CM}). Thus, we cannot have $Z\in X'+Z_{\mathfrak{g}}(Y')$ and no other real forms can occur in our limit formula. The proposition follows.

\section{Fourier Transforms of Semisimple Coadjoint Orbits for $\text{GL}(n,\mathbb{R})$}

Let $G=\text{GL}(n,\mathbb{R})=\text{GL}(2m+\delta,\mathbb{R})$ where $\delta=0\ \text{or}\ 1$, and let $\mathfrak{g}=\text{Lie}(G)$. Fix a fundamental Cartan $\mathfrak{h}_0\subset \mathfrak{g}$, and enumerate its imaginary roots $$\{\alpha_1,\ldots,\alpha_m,-\alpha_1,\ldots,-\alpha_m\}.$$ 
Let $\mathfrak{h}_k$ be the Cartan obtained by applying Cayley transforms through the roots $\alpha_1,\ldots,\alpha_k$. Then $\mathfrak{h}_0,\ldots,\mathfrak{h}_m$ is a set of representatives of the conjugacy classes of Cartan subgalebras of $\mathfrak{g}$. In what follows, we will use these fixed Cayley transforms to identify $(\mathfrak{h}_k)_{\mathbb{C}}\cong (\mathfrak{h}_l)_{\mathbb{C}}$ (and all roots, coroots of $\mathfrak{h}_k$ with roots, coroots of $\mathfrak{h}_l$) without further comment.\\
\indent Let $\Delta(\mathfrak{h}_l)$ (resp. $\Delta_{\text{imag.}}(\mathfrak{h}_l)$, $\Delta_{\text{real}}(\mathfrak{h}_l)$, $\Delta_{\text{cx.}}(\mathfrak{h}_l)$) denote the set of all (resp. imaginary, real, complex) roots of $\mathfrak{g}$ with respect to $\mathfrak{h}_l$. Choose a component $C_m\subset \mathfrak{h}_m'$, and define $\Delta^+$ to be the set of roots $\alpha$ such that $\alpha(X)>0$ for all $X\in C_m$. This fixes a choice of positive roots for $\mathfrak{g}$ with respect to $\mathfrak{h}_l$ for every $l$. Denote by $\Delta^+(\mathfrak{h}_l)$ (resp. $\Delta^+_{\text{imag.}}(\mathfrak{h}_l)$, $\Delta^+_{\text{real}}(\mathfrak{h}_l)$, $\Delta^+_{\text{cx.}}(\mathfrak{h}_l$)) the set of all (resp. imaginary, real, complex) positive roots of $\mathfrak{g}$ with respect to $\mathfrak{h}_l$. Now, choose a regular element $\lambda\in \mathfrak{h}_k^*$ satisfying:

\noindent (a) If $\alpha\in \Delta^+_{\text{imag.}}(\mathfrak{h}_k)$ is a positive, imaginary root of $\mathfrak{h}_k$, then 
$$\langle \lambda, i\alpha^{\vee}\rangle<0.$$
\noindent (b) If $\beta\in \Delta^+_{\text{real}}(\mathfrak{h}_k)$ is a real, positive root of $\mathfrak{h}_k$, then 
$$\langle \lambda, \beta^{\vee}\rangle<0.$$
\noindent Moreover, define 
$$C_l(e)=\{X\in \mathfrak{h}_l'|\alpha(X)>0\ \forall \alpha\in \Delta^+_{\text{real}}(\mathfrak{h}_k)\},$$
and for every $u\in W_{\text{real}}(\mathfrak{h}_l)$, define
$$C_l(u)=u\cdot C_l(e).$$
Here $W_{\text{real}}(\mathfrak{h}_l)$ denotes the Weyl group of the real roots of $\mathfrak{g}$ with respect to $\mathfrak{h}_l$. Note $C_m(e)=C_m$.\\
\\
Let $W_{\mathbb{C}}$ denote the complex Weyl group of $\mathfrak{g}_{\mathbb{C}}$ with respect to $(\mathfrak{h}_l)_{\mathbb{C}}$. Define a subset $W_{k,l}\subset W_{\mathbb{C}}$ to be the set of $w\in W_{\mathbb{C}}$ satisfying:\\
\noindent (i) If $\alpha\in \Delta_{\text{imag.}}(\mathfrak{h}_l)$, then $w^{-1}\alpha\in \Delta_{\text{imag.}}(\mathfrak{h}_k)$.\\
\noindent (ii) If $\alpha\in \Delta_{\text{cx.}}(\mathfrak{h}_l)$, then $w^{-1}\alpha\in \Delta_{\text{cx.}}(\mathfrak{h}_k)$.\\
\noindent (iii) If $\alpha\in \Delta^+_{\text{imag.}}(\mathfrak{h}_k)$ and $w\alpha\notin \Delta_{\text{imag.}}(\mathfrak{h}_l)$, then $w\alpha\in \Delta^+_{\text{real}}(\mathfrak{h}_l)$.\\ 
\\
\noindent For each $w\in W_{k,l}$, let $N_{k,l}(w)$ be the number of $\alpha\in \Delta^+_{\text{imag.}}(\mathfrak{h}_l)$ such that $w^{-1}\alpha\notin \Delta^+_{\text{imag.}}(\mathfrak{h}_k)$. Define
$$\epsilon_{k,l}(w)=(-1)^{N_{k,l}(w)}$$ for every $w\in W_{k,l}$.

\begin{proposition} If $l\geq k$, then 
$$\widehat{\mathcal{O}_{\lambda}}|_{C_l(e)}=\frac{2^{l-k}\sum_{w\in W_{k,l}} \epsilon_{k,l}(w) e^{iw\lambda}}{\pi}.$$
\noindent If $l<k$, then $\widehat{\mathcal{O}_{\lambda}}$ vanishes on $\mathfrak{h}_l'$. Here $\pi=\prod_{\alpha\in \Delta^+(\mathfrak{h}_l)}\alpha$ as usual.
\end{proposition}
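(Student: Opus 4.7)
The plan is to proceed by induction on $l$, anchored at $l = k$ where Rossmann's formula (Corollary 2.6) applies and propagated through successive Cayley transforms $c_{\alpha_{l+1}}$ via Harish-Chandra's matching conditions (Theorem 2.7). By Lemma 2.2, $\widehat{\mathcal{O}_\lambda}|_{C_l(e)} = \pi^{-1} \sum_{u \in W_\mathbb{C}} a_u^{(l)} e^{iu\lambda}$ for some coefficients; the task is to identify each $a_u^{(l)}$ with the prescribed combinatorial data and to establish vanishing when $l < k$.

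For the base case $l = k$, Corollary 2.6 applies on $C_k(e)$: conditions (ii)--(iii) of that corollary follow from the setup of $\Delta^+$ and $C_m$, and our condition (a) is the sign-reverse of its condition (i). Unwinding the definition of $W_{k,l}$ at $l = k$ gives $W_{k,k} = W(G,H_k)$ (conditions (i)--(ii) force $w^{-1}$ to preserve the imaginary, complex, and hence real root subsets, while condition (iii) is vacuous), and $\epsilon_{k,k}$ absorbs Rossmann's sign $(-1)^{q(G,H_k)} \epsilon_I(w)$. The prefactor $2^0 = 1$ matches.

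For the ascending step $l \to l+1$ with $l \geq k$, the components $C_l(e)$ and $C_{l+1}(e)$ are cut out by the common positivity condition on $\Delta^+_{\text{real}}(\mathfrak{h}_k)$, so $C_l(e)$ contains a wall of $\overline{C_{l+1}(e)}$ and Theorem 2.7 yields $\epsilon(u) a_u^{(l+1)} + \epsilon(s_{\alpha_{l+1}} u) a_{s_{\alpha_{l+1}} u}^{(l+1)} = \epsilon(u) a_u^{(l)} + \epsilon(s_{\alpha_{l+1}} u) a_{s_{\alpha_{l+1}} u}^{(l)}$ for each $u \in W_\mathbb{C}$. Temperedness of $\widehat{\mathcal{O}_\lambda}$ supplies the second constraint needed to solve the system: on $\mathfrak{h}_{l+1}$, certain $e^{iu\lambda(X)}$ grow exponentially in $X$, and the coefficients of growing terms must vanish. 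The doubling factor $2^{l+1-k}$ emerges because each new $s_{\alpha_{l+1}}$-pairing contributes two equal entries to the matching sum.

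For the vanishing on $\mathfrak{h}_l'$ with $l < k$, I would argue by downward induction from $\mathfrak{h}_k$, combining Theorem 2.7 with temperedness. A cardinality check already suggests the pattern: for $l < k$, $\mathfrak{h}_l$ has strictly more imaginary roots than $\mathfrak{h}_k$, so no $w \in W_\mathbb{C}$ can inject $\Delta_{\text{imag}}(\mathfrak{h}_l)$ into $\Delta_{\text{imag}}(\mathfrak{h}_k)$, forcing $W_{k,l} = \emptyset$ and making vanishing the natural expectation. To actually deduce it, condition (b) ensures Rossmann's formula on $\mathfrak{h}_k$ is arranged so that the right-hand side of each downward matching equation vanishes, and temperedness forces the coefficients of growing exponentials on $\mathfrak{h}_{k-1}$ to be zero; jointly these leave no room for nonzero $a_u^{(k-1)}$. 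Iterating and invoking $W(G,H_l)$-invariance to extend across components propagates vanishing to every $\mathfrak{h}_l'$ with $l < k$. The main obstacle is the combinatorial bookkeeping in the ascending step: identifying $W_{k,l+1}$ and $\epsilon_{k,l+1}$ as precisely the data produced by the joint constraints of matching and temperedness, and verifying the factor $2^{l+1-k}$ without double-counting pairs under $s_{\alpha_{l+1}}$.
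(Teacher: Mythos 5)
Your route is genuinely different from the paper's. The paper does not induct on Cartans at all: it descends to $M=Z_G(\mathfrak{a})\cong \text{GL}(2,\mathbb{R})^{m-k}\times(\mathbb{R}^\times)^{2k+\delta}$, where $\mathfrak{a}$ is the split part of $\mathfrak{h}_k$, computes $\widehat{\mathcal{O}^M_\lambda}$ there (this is where Rossmann's formula, the matching conditions and temperedness are used, but only in the rank-one $\text{GL}(2,\mathbb{R})$ setting), and then transfers to $G$ by the explicit descent formula of Lemma 2.5, enumerating the $M$-orbits in $\mathcal{O}_X\cap\mathfrak{m}$ via the Cartans $\mathfrak{h}_l^J$ and the cosets $W^M_{\mathbb{R}}(\mathfrak{h}_l^J)\backslash W^G_{\mathbb{R}}(\mathfrak{h}_l^J)$. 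In particular the factor $2^{l-k}$ arises there from the disconnectedness of $M$: the orbit $\mathcal{O}^M_\lambda$ is a union of $2^{m-k}$ orbits of $M_0$ whose contributions collapse in groups of $2^{l-k}$ --- not from a doubling at each Cayley step.

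The gap in your inductive step is the claim that temperedness supplies the second equation needed to split the matched sum $\epsilon(u)a_u+\epsilon(s_{\alpha_{l+1}}u)a_{s_{\alpha_{l+1}}u}$ into its two constituents on $C_{l+1}(e)$. That works only when $\langle u\lambda,\alpha_{l+1}^{\vee}\rangle$ has nonzero imaginary part, so that exactly one of $e^{iu\lambda}$, $e^{is_{\alpha_{l+1}}u\lambda}$ grows along the new real direction. But $\langle u\lambda,\alpha_{l+1}^{\vee}\rangle=\langle\lambda,u^{-1}\alpha_{l+1}^{\vee}\rangle$, and as soon as $k\geq 1$ there are $u\in W_{\mathbb{C}}$ for which $u^{-1}\alpha_{l+1}^{\vee}$ pairs two of the real eigenvalues of $\lambda$ (e.g.\ in $\text{GL}(4,\mathbb{R})$ with $k=1$, eigenvalues $z,\bar z,t_1,t_2$, take $u$ with $u^{-1}\{1,2\}=\{3,4\}$, giving $\pm(t_1-t_2)\in\mathbb{R}$). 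For such pairs both exponentials are bounded in the $\alpha_{l+1}$-direction, temperedness is silent, and the single matching relation across $\ker\alpha_{l+1}$ leaves a one-parameter ambiguity --- and in the true answer both coefficients of such a pair can be nonzero, so one cannot argue one of them away. Closing this requires importing more information (matching across the other real-root walls of $C_{l+1}(e)$, i.e.\ Cayley descents to other intermediate Cartans, together with $W(G,H_{l+1})$-invariance), which amounts to solving the full overdetermined system of Harish-Chandra's uniqueness theorem rather than a wall-by-wall recursion; your vanishing argument for $l<k$ has the same exposure when $\lambda$ has coincidences among the imaginary parts of its eigenvalues. The base case identification $W_{k,k}=W(G,H_k)$ and the sign bookkeeping are fine for $\text{GL}(n,\mathbb{R})$, but the inductive engine as stated does not turn over.
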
 

\noindent In the above discussion, we chose positive roots and components, and then we chose the regular, semisimple element $\lambda$ in a compatible way. One can also choose $\lambda\in \mathfrak{h}_k^*$ regular, semisimple and then choose roots and components compatibly so that the proposition holds.
\begin{proof} Let $\theta$ be a Cartan involution fixing $\mathfrak{h}_k$, and decompose $\mathfrak{h}_k=\mathfrak{t}\oplus \mathfrak{a}$ where $\mathfrak{t}$ is the $+1$ eigenspace of $\theta$ and $\mathfrak{a}$ is the $-1$ eigenspace of $\theta$. Set $M=Z_G(\mathfrak{a})$, and note 
$$M\cong \text{GL}(2,\mathbb{R})^{m-k}\times (\mathbb{R}^{\times})^{2k+\delta}.$$
The identity component of $M$ is $M_0\cong \text{GL}^+(2,\mathbb{R})^{m-k}\times (\mathbb{R}^{\times}_+)^{2k+\delta}.$
\noindent We will compute $\widehat{\mathcal{O}_{\lambda}^G}$ by first computing $\widehat{\mathcal{O}_{\lambda}^M}$ and then applying Harish-Chandra descent.\\

Let $\Delta_M(\mathfrak{h}_l)$ be the set of roots of $M$ with respect to $\mathfrak{h}_l$, and let $\pi_M$ be the product of the roots of $M$ with respect to $\mathfrak{h}_l$ that are positive for $G$.
\noindent Let $W_{\mathbb{R}}^M(\mathfrak{h}_k)$ be the real Weyl group of $M$ with respect to $\mathfrak{h}_k$, and fix $w\in W_{\mathbb{R}}^M(\mathfrak{h}_k)$. Note that $w\pi_M$ is the product of the roots $\alpha$ of $M$ satisfying
$$\langle iw\lambda,\alpha^{\vee}\rangle<0.$$ Then by Theorem 2.4, we have
$$\widehat{\mathcal{O}_{w\lambda}^{M_0}}|_{\mathfrak{h}_k'}=\frac{e^{iw\lambda}}{w\pi_M}.$$
Observe $\mathfrak{h}_l\subset \mathfrak{m}$, and put
$$C_l(e)_M=\{X\in \mathfrak{h}_l'|\ \alpha(X)>0\ \forall\ \text{real\ roots}\ \alpha\in \Delta_M(\mathfrak{h}_l)\cap \Delta_G^+(\mathfrak{h}_l)\}.$$ 
\noindent If $u\in W_{\text{real}}^M(\mathfrak{h}_l)$, define
$$C_l(u)_M=u\cdot C_l(e)_M.$$
Now, decompose $w=w_rw_i$ into its components in the Weyl group of the real roots of $\mathfrak{h}_l$ and the Weyl group of the imaginary roots of $\mathfrak{h}_l$. Checking Harish-Chandra's matching conditions (Theorem 2.7) and using that $\widehat{\mathcal{O}_{w\lambda}^{M_0}}$ is tempered, we observe
$$\widehat{\mathcal{O}_{w\lambda}^{M_0}}|_{C_l(w_r)_M}=\frac{e^{iw\lambda}}{w\pi_M}.$$
Since $w_r$ is in the real Weyl group of $\mathfrak{h}_l$ with respect to $M_0$ and the generalized function we are computing is $M_0$-invariant, we get
$$\widehat{\mathcal{O}_{w\lambda}^{M_0}}|_{C_l(e)_M}=\frac{\epsilon(w_i)e^{iw_i\lambda}}{\pi_M}.$$
\noindent Note that $\mathcal{O}_{\lambda}^M$ is the finite union of the orbits $\mathcal{O}_{w\lambda}^{M_0}$ where $w$ ranges over the real Weyl group $W_{\mathbb{R}}^M(\mathfrak{h}_k)$. Hence,
$$\widehat{\mathcal{O}_{\lambda}^M}|_{C_l(e)_M}=\frac{2^{l-k}\sum_{w\in W_{\text{imag.}}^M(\mathfrak{h}_l)}\epsilon(w)e^{iw\lambda}}{\pi_M}$$
where $W_{\text{imag.}}^M(\mathfrak{h}_l)$ is the Weyl group of the imaginary roots of $\mathfrak{m}$ with respect to $\mathfrak{h}_l$.

Now, we can use Harish-Chandra descent (Lemma 2.5) to compute $\widehat{\mathcal{O}_{\lambda}^G}$. Given $X\in C_l(e)_M$, we must enumerate the $M$-orbits in $\mathcal{O}_X\cap \mathfrak{m}$. First, we choose representatives of the $M$-conjugacy classes of Cartans in $\mathfrak{m}$. For each multi-index $J=(j_1,\ldots, j_{m-l})$ with $1\leq j_1<j_2<\cdots <j_{m-l}\leq m-k$, define $J^c=(r_1,\ldots,r_{l-k})$ to be the complementary indices among $1,\ldots,m-k$. Define $\mathfrak{h}_l^J$ to be the Cartan obtained from $\mathfrak{h}_k\subset \mathfrak{m}$ by applying the Cartan involutions associated to the roots $\alpha_{m-r_s}$ for $s=1,\ldots,l-k$. One sees that the collection $\{\mathfrak{h}_l^J\}$ is a set of representatives for all $M$ conjugacy classes of Cartans in $\mathfrak{m}$ of imaginary rank $m-l$.\\

Now, every $M$ orbit in $\mathcal{O}_X\cap \mathfrak{m}$ must meet at least one of the Cartans $\mathfrak{h}_l^J$. Thus, we have
$$(\mathcal{O}_X\cap \mathfrak{m})/M=\bigcup_{J} (\mathcal{O}_X\cap \mathfrak{h}_l^J)/W_{\mathbb{R}}^M(\mathfrak{h}_l^J)$$ where $W_{\mathbb{R}}^M(\mathfrak{h}_l^J)$ is the real Weyl group of $M$ with respect to $\mathfrak{h}_l^J$.
Moreover, for each Cartan $\mathfrak{h}_l^J$, the $W_{\mathbb{R}}^M(\mathfrak{h}_l^J)$ orbits on $\mathcal{O}_X\cap \mathfrak{h}_l^J$ are in bijection with the cosets $W_{\mathbb{R}}^M(\mathfrak{h}_l^J)\backslash W_{\mathbb{R}}^G(\mathfrak{h}_l^J)$. Thus, we have the formula
$$\widehat{\mathcal{O}^G_{\lambda}}(X)=\sum_J \sum_{u\in W_{\mathbb{R}}^M(\mathfrak{h}_l^J)\backslash W_{\mathbb{R}}^G(\mathfrak{h}_l^J)}\widehat{\mathcal{O}^M_{\lambda}}(uw_JX)|\pi_{G/M}(uw_JX)|^{-1}.$$
\noindent Here $w_J$ is an element of $G$ taking $\mathfrak{h}_l$ to $\mathfrak{h}_l^J$.\\
\indent Note that we get isomorphisms $(\mathfrak{h}_k)_{\mathbb{C}}\cong (\mathfrak{h}_l)_{\mathbb{C}}$ and $(\mathfrak{h}_k)_{\mathbb{C}}\cong (\mathfrak{h}_l^J)_{\mathbb{C}}$ by applying successive Cayley transforms to $\mathfrak{h}_k$. Composing these isomorphisms with a complex Weyl group element that takes positive, non-compact imaginary (resp. real) roots of $\mathfrak{h}_l$ with respect to $\mathfrak{g}$ to the positive, non-compact imaginary (resp. real) roots of $\mathfrak{h}_l^J$, we get a candidate for $w_J$. We will fix such a candidate for each $J$ from now on.\\
Now, we have the formula
$$\widehat{\mathcal{O}_{\lambda}^M}|_{C_{l,J}(e)_M}=\frac{2^{l-k}\sum_{w\in W_{\text{imag.}}^M(\mathfrak{h}^J_l)}\epsilon(w)e^{iw\lambda}}{\pi^J_M}$$
for every $J$. Here $\pi_M^J$ is the product of the positive roots of $\mathfrak{h}_l^J$ with respect to $\mathfrak{m}$, and $W_{\text{imag.}}^M(\mathfrak{h}^J_l)$ is the Weyl group of the imaginary roots of $\mathfrak{h}^J_l$ with respect to $\mathfrak{m}$. We define
$$C_{l,J}(e)_M=\{X\in (\mathfrak{h}_l^J)'|\alpha(X)>0\ \forall\ \text{real\ roots}\ \alpha\in \Delta_M(\mathfrak{h}^J_l)\cap \Delta_G^+(\mathfrak{h}_l)\}$$
and more generally $$C_{l,J}(u)_M=u\cdot C_{l,J}(e).$$
This formula is proved in the same way as the special case of $\mathfrak{h}_l^J=\mathfrak{h}_l$, which is proved above.\\
\indent Partition $W_{k,l}=\bigsqcup_J W_{k,l}^J$ where $w\in W_{k,l}$ is in $W_{k,l}^J$ if $w\cdot\mathfrak{h}_l=\mathfrak{h}_l^J$. Every coset in $W^M_{\mathbb{R}}(\mathfrak{h}_l^J)\backslash W^G_{\mathbb{R}}(\mathfrak{h}_l^J)$ contains a unique representative $u$ such that $u^{-1}$ takes the positive real roots of $\mathfrak{h}_l^J$ with respect to $\mathfrak{m}$ to positive roots of $\mathfrak{h}_l^J$ with respect to $\mathfrak{g}$ and $u^{-1}$ fixes the imaginary roots of $\mathfrak{h}_l^J$ with respect to $\mathfrak{m}$. When we sum over $W^M_{\mathbb{R}}\backslash W^G_{\mathbb{R}}$, we will really be summing over this set of representatives. 
Then 
$$\sum_{u\in W^M_{\mathbb{R}}(\mathfrak{h}_l^J)\backslash W^G_{\mathbb{R}}(\mathfrak{h}_l^J)}\widehat{\mathcal{O}_{\lambda}^M}(uw_JX)|\pi_{G/M}^J(uw_JX)|^{-1}=$$
$$=2^{l-k}\sum_{u\in W^M_{\mathbb{R}}(\mathfrak{h}_l^J)\backslash W^G_{\mathbb{R}}(\mathfrak{h}_l^J)}\sum_{w\in W_{\text{imag.}}^M(\mathfrak{h}^J_l)}\frac{\epsilon(w)e^{iw\lambda(uw_JX)}}{\pi^J_M(uw_JX)|\pi^J_{G/M}(uw_JX)|}$$
$$=2^{l-k}\sum_{w\in W_{k,l}^J}\frac{\epsilon_{k,l}(w)e^{iw\lambda(X)}}{\pi_G(X)}.$$
\noindent The last equality follows from noticing that $\{w_J^{-1}u^{-1}w\}$ is really $W_{k,l}^J$ if $w$ varies over $W_{\text{imag.}}^M(\mathfrak{h}_l^J)$ and $u$ varies over our chosen set of representatives of $W_{\mathbb{R}}^M(\mathfrak{h}_l^J)\backslash W_{\mathbb{R}}^G(\mathfrak{h}_l^J)$. Further, we used  
$$\epsilon(w)=\epsilon_{k,l}(w_J^{-1}u^{-1}w)\ \text{and}\ \pi^J_M(uw_JX)|\pi^J_{G/M}(uw_JX)|=\pi_G(X).$$\\
\noindent Summing over all possible $J$, we get 
$$\widehat{\mathcal{O}_{\lambda}}|_{C_l(e)}=2^{l-k}\sum_J \sum_{w\in W_{k,l}^J}\frac{\epsilon_{k,l}(w)e^{iw\lambda}}{\pi}=\frac{2^{l-k}\sum_{w\in W_{k,l}}\epsilon_{k,l}(w)e^{iw\lambda}}{\pi}.$$
The vanishing of $\widehat{\mathcal{O}_{\lambda}}$ on the other Cartans follows from Harish-Chandra descent.
\end{proof}

\section{Fourier Transforms of Nilpotent Coadjoint Orbits for $\text{GL}(n,\mathbb{R})$}

Let $G=\text{GL}(n,\mathbb{R})$, let $\mathfrak{g}=\mathfrak{gl}(n,\mathbb{R})$, and let $\mathcal{O}_{\mathcal{L}}$ be as in Proposition 4.7. Let $\mathfrak{h}\subset \mathfrak{g}$ be a Cartan subalgebra, and let $H$ be the corresponding Cartan subgroup.
Put
$$\mathfrak{h}''=\{X\in \mathfrak{h}|\ \alpha(X)\neq 0\ \forall\ \text{real\ roots}\ \alpha\},$$
suppose $C\subset \mathfrak{h}''$ is a connected component, and put $C'=C\cap \mathfrak{h}'$. Choose positive roots of $(\mathfrak{g}_{\mathbb{C}},\mathfrak{h}_{\mathbb{C}})$ satisfying:\\
(i) $\alpha(X)>0$ for all positive real roots $\alpha$ and all $X\in C$\\
(ii) If $\alpha$ is a complex root, then $\alpha$ is positive iff $\overline{\alpha}$ is positive.\\
\\
Suppose $L\in \mathcal{L}$ with $L\supset H$. Let $W(G,H)_L$ be the stabilizer of $L$ in the real Weyl group of $G$ with respect to $H$, and let $W(L,H)$ be the real Weyl group of $L$ with respect to $H$. Note that the cardinality of the quotient
$$\left|\frac{W(G,H)_L}{W(L,H)}\right|$$ is independent of the choice of $L\in \mathcal{L}$ with $L\supset H$. Thus, we will denote the cardinality of this quotient by
$$\left|\frac{W(G,H)_{\mathcal{L}}}{W(\mathcal{L},H)}\right|.$$ 

\begin{theorem} We have the formula $$\widehat{\mathcal{O}_{\mathcal{L}}}|_{C'}=\left|\frac{W(G,H)_{\mathcal{L}}}{W(\mathcal{L},H)}\right|\sum_{L\supset H,\ L\in \mathcal{L}}\frac{\pi_L}{\pi}.$$
\end{theorem}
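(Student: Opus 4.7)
The plan is to reduce the computation of $\widehat{\mathcal{O}_{\mathcal{L}}}|_{C'}$ to Proposition 6.1 by combining Proposition 4.7 with Theorem 3.1, and then to carry out a combinatorial reorganization. First, fix $L\in\mathcal{L}$ with $L\supset H$, identify $H=H_{l}$ and a fundamental Cartan $H_{L}\subset L$ with $H_{k}$ via the Cayley chain of Section 6, and pick $\xi\in\mathfrak{h}_{k}^{*}$ with $Z_{G}(\xi)=L$. Proposition 4.7 gives $\widehat{\mathcal{O}_{\mathcal{L}}}=\lim_{t\to 0^{+}}\widehat{\mathcal{O}_{t\xi}}$, while Theorem 3.1 applied to the semisimple element $t\xi$ (whose centralizer is also $L$) yields, for $X\in C'$,
$$\widehat{\mathcal{O}_{t\xi}}(X)=\frac{1}{i^{r(L)}(-1)^{q(L)}|W(L,H_{L})|}\lim_{\lambda\to t\xi,\ \lambda\in C_{t}}\partial(\pi_{L})|_{\lambda}\widehat{\mathcal{O}_{\lambda}}(X),$$
for an appropriate component $C_{t}\subset(\mathfrak{h}_{k}^{*})'$. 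Since Proposition 6.1 presents $\widehat{\mathcal{O}_{\lambda}}(X)$ as a rational function of $X$ times a finite sum of exponentials in $\lambda$, the inner quantity is entire in $\lambda$; this will let me collapse the iterated limit into a single limit $\lambda\to 0$ through a fixed component $C_{0}$.

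Next I substitute Proposition 6.1 explicitly. The operator $\partial(\pi_{L})|_{\lambda}$ acts on each exponential $e^{iw\lambda(X)}$ as multiplication by $i^{r(L)}(w\pi_{L})(X)$, and letting $\lambda\to 0$ collapses every exponential to $1$, producing
$$\widehat{\mathcal{O}_{\mathcal{L}}}|_{C'}=\frac{2^{l-k}(-1)^{q(L)}}{|W(L,H_{L})|\,\pi}\sum_{w\in W_{k,l}}\epsilon_{k,l}(w)\,w\pi_{L}. \qquad (*)$$
The remaining task is to rewrite $(*)$ as a sum over real Levis $L'\in\mathcal{L}$ with $L'\supset H$. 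For any $w\in W_{\mathbb{C}}$ one has $w\pi_{L}=\pm\pi_{wLw^{-1}}$, the sign counting the positive $L$-roots sent to negative $G$-roots by $w$. The conditions (i)--(iii) defining $W_{k,l}$, together with the sign $\epsilon_{k,l}(w)$, are tailored precisely so that every non-cancelling contribution comes from a $w$ for which $L':=wLw^{-1}$ is a real Levi in $\mathcal{L}$ containing $\mathfrak{h}_{l}$, and so that $(-1)^{q(L)}\epsilon_{k,l}(w)$ absorbs the sign in $w\pi_{L}=\pm\pi_{wLw^{-1}}$. Partitioning $W_{k,l}$ by $L'$ and counting the fibres (each a torsor for $W(L,H_{L})$ enlarged by the Cayley identifications from $\mathfrak{h}_{k}$ to Cartans of $L'$ inside $\mathfrak{h}_{l}$), the fibre size over each $L'$ should work out to $2^{l-k}|W(L,H_{L})|\cdot|W(G,H)_{\mathcal{L}}|/|W(\mathcal{L},H)|$, so dividing by the prefactor in $(*)$ yields the coefficient $|W(G,H)_{\mathcal{L}}/W(\mathcal{L},H)|$ of each $\pi_{L'}/\pi$, completing the proof.

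The hard part will be the combinatorial step. The subset $W_{k,l}$ and its sign character $\epsilon_{k,l}$ encode intricate data about Cayley transforms and about which imaginary roots remain imaginary, and both the uniform sign identity (that $(-1)^{q(L)}\epsilon_{k,l}(w)$ kills the sign in $w\pi_{L}=\pm\pi_{wLw^{-1}}$ on every surviving $w$) and the fibre-counting bookkeeping that extracts the factor $|W(G,H)_{\mathcal{L}}/W(\mathcal{L},H)|$ will require detailed use of the product-of-symmetric-groups structure of $\mathrm{GL}(n,\mathbb{R})$-Weyl groups and the explicit Cayley chain fixed in Section 6.
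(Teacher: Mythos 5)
Your plan is essentially the paper's own proof: Proposition 4.7 identifies $\widehat{\mathcal{O}_{\mathcal{L}}}$ with $\lim_{t\to 0^{+}}\widehat{\mathcal{O}_{t\xi}}$, Theorem 3.1 converts this into $\partial(\pi_{L})$ applied to the explicit formula of Proposition 6.1, and the resulting sum $\sum_{w\in W_{k,l}}\epsilon_{k,l}(w)\,w\pi_{L}$ is regrouped over the real Levis $L'\in\mathcal{L}$ containing $H$ via properties (i)--(iii) of $W_{k,l}$ plus a fibre count, exactly as in Section 7. The one quantitative slip is your guessed fibre size $2^{l-k}|W(L,H_{L})|\,|W(G,H)_{\mathcal{L}}|/|W(\mathcal{L},H)|$, which is not consistent with the prefactor in your $(*)$ (it leaves a spurious $2^{2(l-k)}$ or $|W(L,H_{L})|^{2}$ depending on how one ``divides by the prefactor''), but since you explicitly defer that counting as the remaining hard step, this is a bookkeeping error in the sketch rather than a different or flawed approach.
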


\begin{proof} By Proposition 4.7, we know 
$$\lim_{t\rightarrow 0^+} \mathcal{O}^G_{t\xi}=\mathcal{O}_{\mathcal{L}}\ \text{or}\ 0.$$
for any $\xi\in \mathfrak{g}^*$ such that $Z_G(\xi)\in \mathcal{L}$.\\
\noindent Fix such a $\xi\in \mathfrak{g}^*$ such that 
$$L=Z_G(\xi)\cong \text{GL}(q_1,\mathbb{R})\times \cdots \times \text{GL}(q_r,\mathbb{R}).$$ Then we can choose a fundamental Cartan and a labelling of roots in the last section so that $\mathfrak{h}_k\subset \mathfrak{l}$ is a fundamental Cartan and $\text{Lie}(H)=\mathfrak{h}_l$ with $l\geq k$. Note $$k=\sum \left\lfloor \frac{q_i}{2} \right\rfloor.$$ 
Choose positive roots of $\mathfrak{g}$ with respect to $\mathfrak{h}_l$ which satisfy the conditions (i) and (ii) stated at the beginning of this section. This determines positive roots for $\mathfrak{g}$ with respect to $\mathfrak{h}_i$ for every $i$. Put
$$(C^*)'=\{\lambda\in (\mathfrak{h}^*)'|\ \langle i\lambda, \alpha^{\vee} \rangle<0\ \text{for\ all}\ \alpha\in \Delta^+_L\}.$$
\noindent Then by Theorem 3.1,
$$\lim_{\lambda\rightarrow \xi,\ \lambda\in (C^*)'} \partial(\pi_L)|_{\lambda}\mathcal{O}^G_{\lambda}=i^{r(L,H)}|W(L,H)|\mathcal{O}^G_{\xi}.$$
\noindent And by Proposition 6.1,
$$\widehat{\mathcal{O}^G_{\lambda}}|_{C_l'(e)}=\frac{2^{l-k}\sum_{w\in W_{k,l}} \epsilon_{k,l}(w) e^{iw\lambda}}{\pi}.$$
\noindent Plugging this into the previous formula, we get
$$\widehat{\mathcal{O}^G_{\xi}}|_{C_l'(e)}=\frac{2^{l-k}}{|W(L,H)|}\frac{\sum_{w\in W_{k,l}}\epsilon_{k,l}(w) (w\pi_L)e^{iw\xi}}{\pi}.$$
\noindent Then $$\lim_{t\rightarrow 0^+}\widehat{\mathcal{O}_{t\xi}}|_{C_l'(e)}=\frac{2^{l-k}}{|W(L,H)|}\frac{\sum_{w\in W_{k,l}}\epsilon_{k,l}(w) (w\pi_L)}{\pi}.$$
\indent Now, using parts (i) and (ii) of the definition of $W_{k,l}$, we deduce that if $w\in W_{k,l}$, then $w\pi_L=\pm \pi_{L'}$ for some Levi $L'\in \mathcal{L}$ with $L'\supset H$. Conversely, one can deduce that whenever $L'\in \mathcal{L}$ with $L'\supset H$, there exists $w\in W_{k,l}$ such that $w\pi_L=\pm \pi_{L'}$ from the definition of $W_{k,l}$. In fact, using part (iii) of the definition of $W_{k,l}$ together with condition (ii) for our choice of positive roots in Theorem 7.1, we see that $w\pi_L=\epsilon_{k,l}(w)\pi_{L'}$.\\
\indent Combining these considerations, we get
$$\widehat{\mathcal{O}_{\mathcal{L}}}|_{C_l'(e)}=\frac{2^{l-k}}{|W(L,H)|}\sum_{L'\in \mathcal{L},\ L'\supset H}\#\{w\in W_{k,l}|\ wL=L'\}\frac{\pi_{L'}}{\pi}.$$
Finally, we use part (i) of the condition on our choice of positive roots, given at the beginning of this section together with the definitions of $C'$ and $C_l'(e)$ to realize $C'=C_l'(e)$. And a simple counting argument shows $$\frac{2^{l-k}}{\#\{w\in W_{k,l}|\ wL=L'\}}=|W(G,H)_L|.$$ The theorem follows.

\end{proof}

\section{A Corollary and a Conjecture}

An immediate corollary of our formula for $\widehat{\mathcal{O}_{\mathcal{L}}}$ is that the support of $\widehat{\mathcal{O}_{\mathcal{L}}}$ contains every Cartan $H\subset \mathcal{L}$. This follows from the fact that the numerator in our formula is a positive linear combination of products of positive roots; hence, it is a non-zero analytic function. Thus, we get the following corollary.

\begin{corollary} Let $\mathcal{O}_{\mathcal{L}}$ be the nilpotent coadjoint orbit associated to the conjugacy class of Levi factors $\mathcal{L}$. Then 
$$\operatorname{supp}(\widehat{\mathcal{O}_{\mathcal{L}}})=\overline{\bigcup_{L\in \mathcal{L}} \operatorname{Lie}(L)}.$$
\end{corollary}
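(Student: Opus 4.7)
The plan is to read the support of $\widehat{\mathcal{O}_{\mathcal{L}}}$ directly off the explicit formula in Theorem 7.1. That theorem expresses, for every Cartan subgroup $H \subset G$ and every component $C' \subset \mathfrak{h}'$ with admissible positive roots,
$$\widehat{\mathcal{O}_{\mathcal{L}}}|_{C'} \;=\; \left|\frac{W(G,H)_{\mathcal{L}}}{W(\mathcal{L},H)}\right| \cdot \frac{P}{\pi}, \qquad P \;:=\; \sum_{L \in \mathcal{L},\, L \supset H} \pi_L.$$
If no $L \in \mathcal{L}$ contains $H$, then $P$ is an empty sum, so $\widehat{\mathcal{O}_{\mathcal{L}}}$ vanishes identically on $\mathfrak{h}'$; in particular, no regular semisimple element of such a Cartan can lie in the support.

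The first substantive step is to check that the numerator $P$ is a non-zero analytic function whenever at least one $L \in \mathcal{L}$ contains $H$. Here $P$ is a positive linear combination — every coefficient is $+1$ — of non-zero polynomials $\pi_L$, each a product of positive roots of a Levi; since the summands enter with a common positive sign, the usual sign-cancellation obstruction to a sum of non-zero polynomials being zero is absent, and $P$ is a non-zero polynomial on $\mathfrak{h}^*$. Consequently $\widehat{\mathcal{O}_{\mathcal{L}}}|_{\mathfrak{h}'} = P/\pi$ is a non-vanishing real-analytic function on a dense open subset of $\mathfrak{h}'$.

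Combining these two observations, the non-vanishing locus of $\widehat{\mathcal{O}_{\mathcal{L}}}$ on $\mathfrak{g}'$ is exactly $\mathfrak{g}' \cap \bigcup_{L \in \mathcal{L}} \operatorname{Lie}(L)$: a regular semisimple $X \in \mathfrak{g}$ lies in this locus iff its Cartan subalgebra $Z_{\mathfrak{g}}(X)$ is contained in some $\operatorname{Lie}(L)$ with $L \in \mathcal{L}$, which, for $\mathfrak{gl}(n,\mathbb{R})$, is equivalent to $X$ itself lying in $\operatorname{Lie}(L)$ (since $Z_\mathfrak{g}(X) = \mathbb{R}[X]$ for regular semisimple $X$). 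By the theorem of Harish-Chandra that Fourier transforms of $G$-invariant Radon measures on nilpotent orbits are represented by $G$-invariant, locally $L^1$ functions on $\mathfrak{g}$ that are real-analytic on $\mathfrak{g}'$, the support of $\widehat{\mathcal{O}_{\mathcal{L}}}$ in $\mathfrak{g}$ equals the closure in $\mathfrak{g}$ of this non-vanishing locus. Finally, the regular semisimple elements of $\operatorname{Lie}(L)$ (those with $n$ distinct eigenvalues) form a non-empty Zariski-open subset of $\operatorname{Lie}(L)$ and are thus dense in $\operatorname{Lie}(L)$, so the closure coincides with $\overline{\bigcup_{L \in \mathcal{L}} \operatorname{Lie}(L)}$.

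The main obstacle is justifying the \emph{no sign cancellation} claim for $P$. A rigorous version proceeds by evaluating $P$ at a specific well-chosen point of $\mathfrak{h}$: after normalization by a common imaginary factor $i^{q(L)}$ (which is the same for every $L \in \mathcal{L}$ containing $H$, since these Levis share a common complex conjugacy class and hence a common count of imaginary positive roots), each $\pi_L$ contributes a real summand of a fixed sign at a generic point of $C$, making $P$ visibly non-zero there. The only other minor point is the passage from $\mathfrak{g}'$ to $\mathfrak{g}$, which is handled by the aforementioned local-integrability theorem of Harish-Chandra.
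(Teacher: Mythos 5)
Your argument is correct and follows essentially the same route as the paper, which likewise reads the support off Theorem 7.1 and observes that the numerator $\sum_{L\in\mathcal{L},\,L\supset H}\pi_L$ is a positive combination of products of positive roots, hence a non-zero analytic function on each $\mathfrak{h}'$ (the paper is even terser than you are about the passage from non-vanishing on the regular set to the stated closure). The one step you flag as the main obstacle --- that the summands $\pi_L$ cannot cancel at a well-chosen point --- is asserted rather than proved in both your write-up and the paper's; the clean justification is that every imaginary root of $(\mathfrak{g},\mathfrak{h})$ is a root of \emph{each} Levi $L\supset H$ (its root space centralizes the split center of $\mathfrak{l}$), so the imaginary positive roots contribute a literally common factor to all the $\pi_L$, while the remaining real and complex factors of each $\pi_L$ are positive on $C$ by conditions (i) and (ii), which makes the sum non-vanishing at every point of $C'$ rather than merely generically.
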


In general, if $\pi$ is an irreducible, admissible representation of a reductive Lie group $G$, it is an interesting problem to determine the support its character, $\Theta_{\pi}$. As noted in the introduction, in the case $G=\text{GL}(n,\mathbb{R})$, the leading term of $\Theta_{\pi}$ at the identity is always a positive multiple of a the Fourier transform of a nilpotent coadjoint orbit \cite{BV}. Thus, this corollary gives the support of the leading term of an irreducible $\text{GL}(n,\mathbb{R})$ character.\\
\\
\indent Another observation about Theorem 7.1 is that the numerator is an integer linear combination of products of roots. Using results of \cite{V}, it was proven in \cite{SV} that the leading term of an irreducible character $\Theta_{\pi}$ is a nonnegative linear combination of Fourier transforms of nilpotent coadjoint orbits. The author thinks that it is an interesting question to ask whether the Fourier transforms of nilpotent coadjoint orbits also satisfy an integrality condition. Here is the author's conjecture.

\begin{conjecture} Suppose $\mathcal{O}\subset \mathfrak{g}^*$ is a nilpotent coadjoint orbit for a reductive Lie group $G$, suppose $H\subset G$ is a Cartan subgroup, and suppose $C\subset \mathfrak{h}'=\operatorname{Lie}(H)'$ is a connected component of the regular set. Let $\Delta$ be the roots of $\mathfrak{g}_{\mathbb{C}}$ with respect to $\mathfrak{h}_{\mathbb{C}}$, let $\Delta^+$ be a choice of positive roots, and let $\pi$ denote the product of these positive roots. Then
$$\pi\widehat{\mathcal{O}}|_C\in \mathbb{Z}[\Delta].$$
\end{conjecture}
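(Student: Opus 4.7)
The conjecture is already established for $G=\text{GL}(n,\mathbb{R})$ by Theorem 7.1, since each $\pi_L$ appearing there is a product of roots (hence in $\mathbb{Z}[\Delta]$) and the multiplicity $|W(G,H)_\mathcal{L}/W(\mathcal{L},H)|$ is an integer. My plan for a general reductive $G$ is to combine Rossmann's formula (Corollary 2.6) with the limit formulas of Section 4. The output of Rossmann's formula, after clearing the denominator $\pi$, is already a $\mathbb{Z}$-linear combination of exponentials $e^{iw\lambda}$ with $\pm 1$ coefficients, so the task is to show that this $\mathbb{Z}$-integrality survives the passage from semisimple orbits to their nilpotent limits.

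Step one is to pick a semisimple $\nu\in\mathfrak{g}^*$ such that $\mathcal{O}$ appears in $\lim_{t\to 0^+}\mathcal{O}_{t\nu}=\sum n_G(\mathcal{O}',\nu)\mathcal{O}'$ from Proposition 4.1; this is possible whenever $\mathcal{O}$ is an induced orbit (and for even orbits via Proposition 5.1), and for rigid orbits one would reduce via Harish-Chandra descent (Lemma 2.5) to a proper Levi $M\subsetneq G$ in which $\mathcal{O}$ becomes induced. Step two is to match orders in the Taylor expansion in $t$ of the identity $\lim_{t\to 0^+}\widehat{\mathcal{O}_{t\nu}}=\sum n_G(\mathcal{O}',\nu)\widehat{\mathcal{O}'}$ on $C$, using Rossmann's formula
$$\pi\,\widehat{\mathcal{O}_{t\nu}}|_C=(-1)^{q(G,H)}\sum_{w\in W(G,H)}\epsilon_I(w)\,e^{it\langle w\nu,\,\cdot\,\rangle}$$
on the left-hand side. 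Since each $\widehat{\mathcal{O}'}$ is homogeneous of degree determined by $\dim\mathcal{O}'$, exactly one order $t^N$ in the Taylor expansion contributes to each orbit, reducing the problem to analyzing the skew polynomial $\tfrac{1}{N!}\sum_{w}\epsilon_I(w)\langle w\nu,\,\cdot\,\rangle^N$. This polynomial is divisible by $\pi_I$ (the product of positive imaginary roots), and after that division the factor $1/N!$ should be absorbed by the combinatorial integrality of the expansion of skew polynomials in the basis of products of roots.

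The main obstacle will be the separation of distinct real forms of a single complex nilpotent orbit that can appear simultaneously in $\lim_{t\to 0^+}\mathcal{O}_{t\nu}$. For $\text{GL}(n,\mathbb{R})$ this does not occur (each complex nilpotent orbit has a unique real form), which is precisely why Theorem 7.1 yields integers essentially for free. For groups such as $O(p,q)$ or $\text{Sp}(2n,\mathbb{R})$, one must vary $\nu$ and invert a matrix of $\pm 1$'s encoding the real form data, and proving that this matrix is unimodular over $\mathbb{Z}$ likely requires geometric input beyond the elementary methods of this paper, for instance the Schmid-Vilonen characteristic-cycle construction of $\widehat{\mathcal{O}}$, whose intrinsic integrality would supply the missing ingredient. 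A secondary obstacle is verifying that Harish-Chandra descent (Lemma 2.5) preserves $\mathbb{Z}[\Delta]$-integrality when pulling back from a Levi, since the factor $|\pi_{G/M}|^{-1}$ interacts nontrivially with the $M$-orbit decomposition of $\mathcal{O}_X^G\cap\mathfrak{m}$ and must be shown to recombine into an integer polynomial in the roots.
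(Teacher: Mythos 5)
The statement you are trying to prove is not proved in the paper at all: it is stated as Conjecture 8.2, an open problem, and the only case the paper establishes is $G=\mathrm{GL}(n,\mathbb{R})$ via Theorem 7.1 (where integrality falls out because $n_G(\mathcal{O},\nu)=1$, every complex nilpotent orbit has a unique real form, and the factor $|W(L,H)|$ cancels exactly against a Weyl-group count). Your proposal correctly observes that Theorem 7.1 settles the $\mathrm{GL}(n,\mathbb{R})$ case, but for general $G$ what you have written is a research plan, not a proof, and you say so yourself: you concede that the unimodularity of the matrix separating real forms ``likely requires geometric input beyond the elementary methods of this paper,'' and that integrality under Harish-Chandra descent is unverified. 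Those are not secondary issues; they are precisely the content of the conjecture.

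Beyond the acknowledged gaps, two steps in your outline are concretely wrong or unsubstantiated. First, the claim that Rossmann's formula gives, after clearing $\pi$, a $\mathbb{Z}$-combination of exponentials with $\pm1$ coefficients on the component $C$ is only valid when $H$ is (conjugate into) a Cartan of $Z_G(\lambda)$; on an arbitrary Cartan one only has Lemma 2.2 with unknown constants $a_w$ constrained by the matching conditions of Theorem 2.7, and these are not $\pm1$ in general --- already in the paper's own Proposition 6.1 they are $2^{l-k}\epsilon_{k,l}(w)$ or $0$. Since the conjecture is about every Cartan and every component, this is exactly where the difficulty lives. Second, the assertion that the $1/N!$ in the Taylor coefficient ``should be absorbed by the combinatorial integrality of the expansion of skew polynomials in the basis of products of roots'' is not a known fact and is essentially a restatement of the conjecture: whether $\pi^{-1}\cdot\frac{1}{N!}\sum_w \epsilon_I(w)\langle w\nu,\cdot\rangle^N$ lies in $\mathbb{Z}[\Delta]$ is the thing to be proved, not a lemma you may invoke. (Note also that for nonregular $\nu$ one must use the derivative limit of Theorem 3.1 rather than a naive Taylor expansion, which introduces the additional factor $|W(L,H)|^{-1}$ that must be shown to cancel integrally, as it does in the $\mathrm{GL}(n,\mathbb{R})$ computation.) In short: the $\mathrm{GL}(n,\mathbb{R})$ case of your argument coincides with the paper's, and the general case remains open both in the paper and in your proposal.
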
 

Of course, this conjecture would imply the analogous fact for leading terms of irreducible characters.

\section{Acknowledgments}
The author wishes to thank his advisor David Vogan for many conversations. The author would also like to thank Wulf Rossmann and Dan Barbasch for a few helpful tips and clarifications. 

\bibliographystyle{amsplain}

\end{document}